\newcommand{\ligne}{\vspace{1\baselineskip}}
\newcommand{\ph}{\phantomsection}
\newcommand{\cal}{\mathcal}
\newcommand{\di}{\displaystyle}
\newcommand{\R}{\mathbb  R}
\newcommand{\C}{\mathbb  C}
\newcommand{\N}{\mathbb  N}
\newcommand{\1}{\mathbb  I}
\renewcommand{\P}{\mathbb{P}}
\newcommand{\W}{  \mathcal{W}   }
\newcommand{\M}{  \mathcal{M}  }
\newcommand{\eps}{\varepsilon}
\renewcommand{\epsilon}{\varepsilon}
\newcommand{\e}{  \text{e}   }
\newcommand{\E}{\mathbb{E} }
\renewcommand{\H}{  \mathcal{H}   }
\newcommand{\A}{  \mathcal{A}   }
\newcommand{\dis}{\displaystyle}
\newcommand{\om}{  \omega   }
\renewcommand{\a}{  \alpha   }
\newcommand{\lessim}{  \lesssim  }
\renewcommand{\phi}{  \varphi  }
\renewcommand{\L}{  \mathcal{L}   }
\newcommand{\<}{  \langle   }
\renewcommand{\>}{  \rangle   }
\numberwithin{equation}{section}
\theoremstyle{plain}
\newtheorem*{acknowledgements}{Acknowledgements}
\newtheorem{assumption}{Assumption}
\def\beq{\begin{equation}}   \def\eeq{\end{equation}}
\def\bea{\begin{eqnarray}}  \def\eea{\end{eqnarray}}
\renewcommand{\theequation}{\thesection.\arabic{equation}}
\newcounter{hran} \renewcommand{\thehran}{\thesection.\arabic{hran}}
\def\bmini{\setcounter{hran}{\value{equation}}
    \refstepcounter{hran}\setcounter{equation}{0}
    \renewcommand{\theequation}{\thehran\alph{equation}}\begin{eqnarray}}
\def\bminiG#1{\setcounter{hran}{\value{equation}}
\refstepcounter{hran}\setcounter{equation}{-1}
\renewcommand{\theequation}{\thehran\alph{equation}}
\refstepcounter{equation}\label{#1}\begin{eqnarray}}
\author{ Aur\'elien Poiret}
\address{Laboratoire de Math\'ematiques, UMR 8628 du CNRS.
Universit\'e Paris Sud, 91405 Orsay Cedex, France}
\email{aurelien.poiret@math.u-psud.fr}
\author{ Didier Robert}
\address{Laboratoire de Math\'ematiques J. Leray, UMR  6629 du CNRS, Universit\'e de Nantes, 
2, rue de la Houssini\`ere,
44322 Nantes Cedex 03, France}
\email{didier.robert@univ-nantes.fr}
\author{ Laurent Thomann }
\address{Laboratoire de Math\'ematiques J. Leray, UMR  6629 du CNRS, Universit\'e de Nantes, 
2, rue de la Houssini\`ere,
44322 Nantes Cedex 03, France}
\email{laurent.thomann@univ-nantes.fr}
 \title[Random  weighted Sobolev inequalities on $\R^d$]{Random  weighted Sobolev inequalities on $\R^d$ and application to Hermite functions}
\begin{document}
\frontmatter

 \begin{abstract}
We extend a randomisation method, introduced by Shiffman-Zelditch and developed by Burq-Lebeau on compact manifolds for the Laplace operator, to the case of $\R^d$ with the harmonic oscillator.   We construct measures, thanks to probability laws which satisfy the concentration of measure  property, on the support of which we  prove optimal weighted Sobolev estimates on~$\R^d$. This construction relies on accurate estimates on the spectral function in a non-compact configuration space. As an application, we show that there exists a basis of Hermite functions with good decay properties in $L^{\infty}(\R^{d})$, when $d\geq 2$.
  \end{abstract}
\subjclass{35R60 ;  35P05 ; 35J10 ; 33D45}
\keywords{Harmonic oscillator, spectral analysis, concentration of measure, Hermite functions}
\thanks{D. R.  was partly supported  by the  grant  ``NOSEVOL''   ANR-2011-BS01019 01.\\L.T. was partly supported  by the  grant  ``HANDDY'' ANR-10-JCJC 0109}
\maketitle
\mainmatter

 \section{Introduction and results}
 
  \subsection{Introduction}\label{intro}
During the last  years, several papers have shown that  some basic results concerning P.D.E. and Sobolev spaces  can be strikingly improved using randomization techniques. In particular  Burq-Lebeau developed in  \cite{bule} a randomisation method based on the Laplace operator  on a compact Riemannian manifold, and showed that almost surely, a function enjoys better Sobolev estimates than expected, using ideas of Shiffman-Zelditch \cite{ShZe}. 
This 
approach depends heavily on spectral properties of the operator one considers.   In this paper we  are interested in estimates in Sobolev spaces based on the harmonic oscillator      in~$L^2(\R^d)$
\begin{equation*}  
H=-\Delta+|x|^{2}=\sum_{j=1}^{d}(-\partial^{2}_{j}+x^{2}_{j}).
\end{equation*}
 \indent  We get optimal stochastic weighted Sobolev estimates on $\R^d$ using the Burq-Lebeau method. Indeed we show that   there is a unified  setting  for these results, including the case of compact manifolds. We also make the following extension: In  \cite{bule}, the construction of the measures relied on Gaussian random variables, while in our work we consider general random variable which satisfy concentration of measure estimates (including discrete random variables, see Section \ref{Sect2}).  However, we obtain the optimal estimates only in the case of the Gaussians. 
 
We will see that the extension from a compact manifold to an operator on $\R^{d}$ with discrete spectrum is not trivial because of the complex behaviour of the spectral function on a non-compact configuration space.

In our forthcoming paper \cite{PRT2}, we will give some applications to the well-posedness of  nonlinear Schr\"odinger equations with Sobolev regularity below the optimal deterministic index.\ligne
 
Most of the results stated here  can be extended to  more general Schr\"odinger Hamiltonians $-\triangle + V(x)$
with  confining potentials $V$.  This will be detailed in \cite{PRT3}. \ligne

 Let $d\geq 2$. We want to define probability measures on finite dimensional subspaces $\cal{E}_h \subset L^2(\R^d)$, based on spectral   projections with respect to $H$. We denote by $\{\phi_{j},\;j\geq 1\}$ an orthonormal basis of~$L^{2}(\R^{d})$ of eigenvectors  of~$H$ (the Hermite functions), and we denote by $\{\lambda_j,\;j\geq 1\}$  the non decreasing  sequence of eigenvalues   (each is repeated according to its multiplicity): $H \phi_{j} =\lambda_j \phi_{j}$.\ligne
  
  For $h>0$, we define the interval   $I_h = [\frac{a_h}{h}, \frac{b_h}{h}[$  and we assume that $a_{h}$ and $b_{h}$ satisfy, for some $a,b,D>0, \delta\in[0, 1]$, 
\begin{equation}\label{spect1}
 \lim_{h\rightarrow 0}a_h =a,  \;\;\; \lim_{h\rightarrow 0}b_h = b,\quad 0<a \leq b\quad \text{and}\quad  b_h -a_h \geq Dh^\delta,
\end{equation}
with  any $D>0$ if $\delta<1$ and  $D\geq2$ in the case $\delta=1$. This condition ensures that $N_{h}$,  the number (with multiplicities) of eigenvalues of $H$ in $I_h$ tends to infinity when $h\to 0$.
Indeed,  we can check that $N_{h}\sim ch^{-d}(b_{h}-a_{h})$, in particular $\dis \lim_{h\rightarrow 0}N_h = +\infty$, since $d\geq 2$.  
 In the sequel, we write $\Lambda_h = \{j\geq 1, \;\lambda_j\in I_h\}$ and ${\cal E}_h = {\rm span}\{\phi_{j},\;j\in \Lambda_h\}$, so that  $N_h  = \#\Lambda_h={\rm dim}\,{\cal E}_h$. Finally, we denote by ${\bf S}_h=\big\{u\in \cal{E}_h\,:\; \Vert u\Vert_{L^2(\R^d)} =1\big\}$ the unit sphere  of~$\cal{E}_h$.\ligne
  
 In the sequel, we will consider sequences $(\gamma_{n})_{n\in \N}$  so that  there exists $K_{0} >0$ 
 \beq\label{condi1}
 \vert\gamma_n\vert^2 \leq \frac{K_0}{N_h}\sum_{j\in\Lambda_h}\vert\gamma_j\vert^2, \quad \forall n\in\Lambda_h,\;\;\forall h\in ]0, 1].
\eeq
This condition  means that on each level of energy $\lambda_n$, $n\in\Lambda_h$, one  coefficient $|\gamma_{k}|$ cannot be much larger  than the others. Sometimes, in order to prove lower bound estimates, we will need the stronger condition ($K_{1}>0$)
\beq\label{condi0}
 \frac{K_1}{N_h}\sum_{j\in\Lambda_h}\vert\gamma_j\vert^2 \leq \vert\gamma_n\vert^2 \leq \frac{K_0}{N_h}\sum_{j\in\Lambda_h}\vert\gamma_j\vert^2, \quad \forall n\in\Lambda_h,\;\;\forall h\in ]0, 1].
\eeq
  This    so-called ``squeezing'' condition means that on each level of energy $\lambda_n$, $n\in\Lambda_h$,  the coefficients~$|\gamma_{k}|$ have almost the same size. For instance \eqref{condi1} or \eqref{condi0} hold if there exists $(d_{h})_{h\in]0,1]}$ so that $\gamma_{n}=d_{h}$ for all $n\in \Lambda_h$.\ligne
 
Consider a   probability space
$(\Omega, {\cal F}, {\P})$  and  let $\{X_n,\;n\geq1\}$ be  independent standard complex Gaussians $\mathcal{N}_{\C}(0,1)$. In fact, in our work we will consider more general probability laws, which satisfy concentration of measure estimates (see Assumption \ref{Assum1}), but for sake of clarity, we first state the results in this particular case. If $(\gamma_{n})_{n\in \N}$ satisfies \eqref{condi1}, we  define the random vector  in ${\cal E}_h$ 
\begin{equation*}
v_{\gamma}(\omega) : = v_{\gamma,h} (\omega)= \sum_{j\in\Lambda_{h}}\gamma_jX_j(\omega)\phi_{j}.
\end{equation*}
 We  define a probability measure ${\bf P}_{\gamma,h}$ on ${\bf S}_h$ by: for all measurable and bounded function $f:{\bf S}_h\longrightarrow \R$
 \begin{equation*}
  \int_{{\bf S}_h}f(u)\text{d}{\bf P}_{\gamma,h}(u) =  \int_\Omega f\left(\frac{v_\gamma(\omega)}{\Vert v_\gamma(\omega)\Vert_{L^{2}(\R^{d})}}\right)\text{d}\P(\omega).
 \end{equation*}
We can check that in the isotropic case ($\gamma_j = \frac{1}{\sqrt {N_{h}}}$ for all $j\in\Lambda_{h}$), 
 ${\bf P}_{\gamma,h}$  is the uniform probability on~${\bf S}_h$ (see Appendix \ref{AppC}). \ligne
 
  Finally, let us recall the definition of harmonic Sobolev spaces for $s\geq 0$, $p\geq 1$. 
 \begin{equation}\label{sobharm}
         \W^{s, p}= \W^{s, p}(\R^d) = \big\{ u\in L^p(\R^d),\; {H}^{s/2}u\in L^p(\R^d)\big\},     
       \end{equation}
       \begin{equation*}
           \H^{s}=   {\cal H}^{s}(\R^d) = \W^{s, 2}.
       \end{equation*}
             The natural norms are denoted by $\Vert u\Vert_{\W^{s,p}}$ and up to equivalence of norms we have (see \cite[Lemma~2.4]{YajimaZhang2}) 
             for $1<p<+\infty$
        $$
      \Vert u\Vert_{\W^{s,p}} = \Vert  H^{s/2}u\Vert_{L^{p}} \equiv \Vert (-\Delta)^{s/2} u\Vert_{L^{p}} + 
       \Vert\<x\>^{s}u\Vert_{L^{p}}.
       $$
 
  \subsection{Main results of the paper} 
  
  \subsubsection{\bf Estimates  for frequency localised functions} 
Our first result gives properties of the elements on the support of ${\bf P}_{\gamma,h}$, which are high frequency localised functions. Namely 
 
  \begin{theo} \ph\label{thm1}
Let $d\geq 2$. Assume that $0\leq \delta<2/3$ in \eqref{spect1} and that condition \eqref{condi0} holds. Then there exist $0<C_0 < C_1$,    $c_1>0$ and $h_0>0$ such that for all $h\in]0, h_0]$.
\begin{equation*} 
{\bf P}_{\gamma,h}\left[u\in{\bf S}_h : C_0\vert\log h\vert^{1/2}  \leq \Vert u\Vert_{\W^{d/2,\infty}(\R^{d})} 
\leq C_1\vert\log h\vert^{1/2}\,  \right] \geq 1- h^{c_1}.
\end{equation*}
Moreover the estimate  from above is satisfied for  any $\delta \geq 1$ with $D$ large enough.
\end{theo}

It is clear that under condition \eqref{condi0}, there exist $0<C_2 < C_3$, so that for all $u\in {\bf S}_h$, and $s\geq 0$
\begin{equation*}
C_2h^{-s/2}  \leq \Vert u\Vert_{\H^{s}(\R^{d})}\leq C_3 h^{-s/2},
\end{equation*}
since all elements of ${\bf S}_h$ oscillate with frequency $h^{-1/2}$. Thus Theorem \ref{thm1} shows a gain of $d/2$ derivatives in $L^{\infty}$, and this induces a gain of $d$ derivatives compared to the usual deterministic Sobolev embeddings. This  can be compared with the results of \cite{bule} where the authors obtain a gain of ~$d/2$ derivatives on compact manifolds: this comes from different behaviours of the spectral function, see Section \ref{Sect3}.  
Notice that the bounds in Theorem \ref{thm1} (and in the results of \cite{bule} as well) do not depend on the length of the interval of the frequency localisation $I_{h}$ (see \eqref{spect1}), but only on the size of the frequencies. This is a consequence of the randomisation, and from the bound \eqref{dsp}.
\ligne

We will see in Theorem \ref{Thm41} that the upper bound in Theorem \ref{thm1} holds for any $0\leq \delta\leq1$ and for more general random variables $X$ which satisfy the concentration of measure property. However, to prove the lower bound (see Corollary \ref{2ESTx}), we have to restrict to the case of Gaussians: in the general case, under Assumption \ref{Assum1}, we do not reach the factor $\vert \ln h \vert^{1/2}$. Following the approach of \cite{ShZe, bule}, we first prove estimates of $\Vert u\Vert_{\W^{d/2,\infty}(\R^{d})}$ with large $r$ and uniform constants (see Theorem \ref{thm412}), and which are essentially optimal for general random variables (see Theorem \ref{thm413}).

The condition $\delta<2/3$ is needed to prove the lower bound, thanks to a reasonable  functional calculus based on the harmonic oscillator (see Appendix \ref{AppB}). \ligne

Finally we point out that in a very recent paper \cite{feze}, Feng and Zelditch  prove similar estimates for the mean and median for the $L^\infty$-norm of random holomorphic fields.\ligne

\subsubsection{\bf Global  Sobolev estimates} Using a dyadic Littlewood-Paley decomposition, we now give general estimates in Sobolev spaces; we refer to  Subsection \ref{UB}  for more details.   For $s\in\R$ and~$p, q\in[1,+\infty]$, we define the harmonic Besov space by 
         \begin{equation}\label{Besov}
           \di{{\cal B}_{p,q}^s(\R^d) = \Big\{u=\sum_{n\geq 0}u_n: \;\; 
         \sum_{n\geq 0}2^{nqs/2}\Vert u_n\Vert_{L^p(\R^d)}^q <+\infty\,\Big\}},
         \end{equation}
   where the $u_n$ have frequencies of size $\sim 2^n$. The  space ${\cal B}_{p,q}^s(\R^d)$    is a Banach space  with the norm in~$\ell^q(\N)$ of $\{2^{ns/2}\Vert u_n\Vert_{L^p(\R^d)}\}_{n\geq 0}$.
   
         We assume that  $\gamma$ satisfies (\ref{condi1})  and 
         $$
         \sum_{n\geq 0}\vert\gamma\vert_{\Lambda_n} < +\infty \quad \text{where}\quad \vert\gamma\vert^{2}_{\Lambda_n}:= \sum_{k : \,\lambda_{k} \in [2^{k},2^{k+1}[}\vert\gamma_{k}\vert^{2} .
         $$
         Then we set 
         $$
  v_\gamma(\omega) = \sum_{j=0}^{+\infty} \gamma_jX_j(\omega)\varphi_j,
$$
so that almost surely $v_\gamma  \in  {\cal B}_{2, 1}^0(\R^d)$   and its  probability law  defines a measure 
$\mu_\gamma$ in $ {\cal B}_{2, 1}^0(\R^d)$. Notice that we have
         $$
         {\cal H}^s(\R^d) \subset {\cal B}_{2, 1}^0(\R^d) \subset L^2(\R^d),\quad \forall s>0.
         $$
         We have the following result
        \begin{theo}\ph\label{Stric}
        For every $(s,r)\in\R^2$ such that $r\geq 2$ and $s=d(\frac{1}{2} -\frac{1}{r})$
        there exists $c_0>0$ such that for all $K>0$ we have
        \beq\label{lp0}
        \mu_{\gamma}\Big[\,u\in {\cal B}_{2, 1}^0(\R^d):\; \Vert u\Vert_{{\cal W}^{s,r}(\R^d)} \geq  K\Vert u\Vert_{{\cal B}_{2, 1}^0(\R^d)} \,\Big]\leq {\rm e}^{-c_0K^2}.
        \eeq
        In particular $\mu_\gamma$-almost all functions in ${\cal B}_{2, 1}^0(\R^d)$ are in 
        ${\cal W}^{s,r}(\R^d)$.
        \end{theo}

If      $\gamma$ satisfies (\ref{condi1})  and the (weaker) condition
         $
      \dis   \sum_{n\geq 0}\vert\gamma\vert^2_{\Lambda_n} < +\infty,
         $
then $\mu_\gamma$ defines a probability  measure on $L^2(\R^d)$ and we can prove the estimate
   \begin{equation}
     \mu_{\gamma}\Big[\,u\in L^2(\R^d):\; \Vert u\Vert_{{\cal W}^{s,r}(\R^d)} \geq  K\Vert u\Vert_{L^2(\R^d)} \,\Big]\leq {\rm e}^{-c_0K^2},
   \end{equation}
with  $s= d(\frac{1}{2} -\frac{1}{r})$ when $r<+\infty$ and $s< d/2$ in the case $r=+\infty$.   From this result it is easy to deduce space-time estimates (Strichartz) for the linear flow $\e^{-itH}u$, which can be used to study the nonlinear problem. This will be pursued in  \cite{PRT2}.

  \subsubsection{\bf An application to Hermite functions} 
Similarly to \cite{bule}, the previous results give some information on Hilbertian bases. We prove that there exists a basis of Hermite functions with good decay properties. 
\begin{theo}\label{theoB}
Let $d\geq 2$. Then there exists a Hilbertian basis of $L^{2}(\R^{d})$ of eigenfunctions of the harmonic oscillator $H$ denoted by  $(\phi_{n})_{n\geq 1}$ such that $\|\phi_{n}\|_{L^{2}(\R^{d})}=1$ and so that for some $M>0$ and all   $n\geq 1$, 
\begin{equation}\label{new} 
\|\phi_{n}\|_{L^{\infty}(\R^{d})}\leq M\lambda_{n}^{-\frac{d}4} (1+\log \lambda_{n})^{1/2}.
\end{equation}
 \end{theo}

 We refer to Theorem~\ref{theoBase} for a more quantitative result, and where we prove that for a natural probability measure, almost all Hermite basis satisfies the property of Theorem~\ref{theoB} (see also Corollary~\ref{coroBase}). For the proof of this result, we need the finest randomisation with $\delta=1$ and $D=2$ in~\eqref{spect1}, so that  ${\bf P}_{\gamma,h}$ is a probability measure on each eigenspace. 

The result of Theorem \ref{theoB} does not hold true in dimension $d=1$. Indeed, in this case one can prove the optimal bound (see \cite{KOTA})
\begin{equation}\label{1D}
\|\phi_{n}\|_{L^{\infty}(\R)}\leq Cn^{-1/12}.
\end{equation}

Let us compare \eqref{dec} with the general known bounds on Hermite functions. We have $H\phi_{n}=\lambda_{n} \phi_{n}$, with $\lambda_{n}\sim c n^{1/d}$, therefore \eqref{dec} can be rewritten
\begin{equation}\label{dec}
\|\phi_{n}\|_{L^{\infty}(\R^{d})}\leq M n^{-1/4}(1+\log n)^{1/2}.
\end{equation}
 For a general basis with $d\geq 2$, Koch and Tataru \cite{KOTA} (see also \cite{KTZ}) prove that 
\begin{equation*} 
\|\phi_{n}\|_{L^{\infty}(\R^{d})}\leq C\lambda_{n}^{{\frac{d}4-\frac1{2}}},
\end{equation*}
which shows that \eqref{new} induces a gain of $d-1$ derivatives compared to the general case.   We stress that we don't now any explicit example of $(\phi_{n})_{n\geq 1}$ which satisfy the conclusion of the Theorem. For instance, the basis $(\phi^{\intercal}_{n})_{n\geq 1}$ obtained by tensorisation of the 1D basis does not realise \eqref{dec} because of~\eqref{1D} which implies the optimal bound
\begin{equation*} 
\|\phi^{\intercal}_{n}\|_{L^{\infty}(\R^{d})}\leq C\lambda_{n}^{-1/12}.
\end{equation*}
Observe also that the basis of radial Hermite functions does not satisfy~\eqref{dec} in dimension $d\geq 2$. As in \cite[Théorème 8]{bule}, it is likely that the   $\log$ term in \eqref{dec} can not be avoided.

 \subsection{Notations and plan of the paper} 
 
 \begin{enonce*}{Notations}
 In this paper $c,C>0$ denote constants the value of which may change
from line to line. These constants will always be universal, or uniformly bounded with respect to the other parameters.\\ 
We denote by $H=-\Delta+|x|^{2}=\sum_{j=1}^{d}(-\partial^{2}_{j}+x^{2}_{j})$ the harmonic oscillator on $\R^{d}$, and   for $s\geq 0$ we define the   Sobolev space $\H^{s}$  by the norm  $\|u\|_{\H^{s}}=\|H^{s/2}u\|_{L^{2}(\R^{d})}\approx \|u\|_{H^{s}(\R^{d})}+\|\<x\>^{s}u\|_{L^{2}(\R^{d})}$.  More generally, we define the spaces $\W^{s,p}$ by the norm $\|u\|_{\W^{s,p}}=\|H^{s/2}u\|_{L^{p}(\R^{d})}$. We write $L^{r,s}(\R^d) = L^r(\R^d, \<x\>^s\text{d}x)$, and its norm $\|u\|_{r,s}$. 
\end{enonce*}

The rest of the paper is organised as follows. In Section \ref{Sect2} we describe  the general probabilistic setting and we prove large deviation estimates on   Hilbert spaces.   In  Section \ref{Sect3} we state crucial estimates on the spectral function of the harmonic oscillator.    Section \ref{Sect4} is devoted to the proof of   weighted Sobolev estimates and of the mains results. In Section \ref{Sect5} we prove Theorem \ref{theoB}.
 
\begin{acknowledgements}
The authors thank Nicolas Burq for discussions on this subject and for his  suggestion to introduce conditions \eqref{condi1}-\eqref{condi0}.
\end{acknowledgements}


\section{A general setting for probabilistic smoothing estimates}\label{Sect2}
Our aim  in this section is to unify several probabilistic approaches to improve smoothing estimates 
established for dispersive equations. This setting is inspired by   papers of Burq-Lebeau \cite{bule},
 Burq-Tzvetkov \cite{BT2,BT3} and their collaborators.

\subsection{The concentration of measure property}

\begin{defi}\ph\label{defiCM}
We say that a family of  Borelian probability measures   $(\nu_{N}, \R^{N})_{N\geq 1}$  satisfies the concentration of measure property if there exist  constants $c,C>0$ independent of $N\in \N$ such that for all Lipschitz and convex function $F : \R^{N}\longrightarrow \R$
\begin{equation}\label{Blip}
\nu_{N}\Big[\,X\in \R^{N}\;:\;\big|F(X)-\E(F(X))\big|\geq r \,\Big]\leq c\,e^{-\frac{Cr^{2}}{ \|F\|^{2}_{Lip}}},\quad\forall r>0, 
\end{equation}
where $\|F\|_{Lip}$ is the best constant so that $\dis |F(X)-F(Y)|\leq \|F\|_{Lip}\|X-Y\|_{\ell^{2}}$.
\end{defi}
For a comprehensive study of these phenomena, we refer to the book of Ledoux \cite{led}. Notice that one of the main features of \eqref{Blip} is that the bound is independent of the dimension of space, which enables to take $N$ large.

Typically, in our applications, $F$ will be a norm in $\R^N$.

\ligne

Let us give some significative examples of such measures.\\[4pt]
\indent $\bullet$ If $(\nu_{N}, \R^{N})_{N\geq 1}$ is   a family of probability measures which satisfies a Log-Sobolev estimate with constant $C^{\star}>0$, then \eqref{Blip} is satisfied for all Lipschitz function $F : \R^{N}\longrightarrow \R$ (see  \cite[Th\'eor\`eme~7.4.1, page 123]{Panorama}).
 Recall that a probability measure $\nu_{N}$ on $\R^{N}$ satisfies a Log-Sobolev estimate if there exists $C>0$  independent of $N\geq 1$ so that for all $f\in \mathcal{C}_{b}(\R^{N})$ 
\begin{equation}\label{logSobo}
\int_{\R^{N}} f^{2}\ln \big( \frac{f^{2}}{\E(f^{2})}\big ) \text{d}\nu_{N}(x)    \leq C\int_{\R^{N}}|\nabla f|^{2}\text{d}\nu_{N}(x), \quad \E(f^{2})=\int_{\R^{N}} f^{2} \text{d}\nu_{N}(x). 
\end{equation}
 Such a property is usually difficult to check. See \cite{Panorama} for more details. Notice that the convexity of $F$ is not needed.\\[4pt]
 \indent$\bullet$  A probability measure of the form $\text{d}\nu_{N}(x)=c_{\a,N}\exp\big({-\sum_{j=1}^{N}|x_j|^{\a}}\big)\text{d}x$, $x \in\R^N$, satisfies \eqref{Blip} if and only if $\a\geq 2$ (see \cite[page 109]{Panorama}).
\\[4pt]
 \indent$\bullet$ Assume that $\nu$ is a measure on $\R$ with bounded support, then $\nu_{N}=\nu^{\otimes N}$ satisfies the concentration of measure property. This is the Talagrand theorem \cite{Tala} (see also \cite{Tao} for an introduction to the topic).

\begin{assumption}\ph\label{Assum1}
Consider a   probability space
$(\Omega, {\cal F}, {\P})$  and  let $\{X_n,\;n\geq1\}$ be a sequence of independent, identically distributed, real or complex random values. In the sequel we can assume that they are real with the identification $\C\approx \R^{2} $.  Moreover, we assume that for all $n\geq 1$, 
\begin{enumerate}[(i)]
\item  Denote by $\nu$  law  of the  $X_{n}$. We assume that the family $(\nu^{\otimes N}, \R^{N})_{N\geq 1}$ satisfies the concentration of measure property in the sense of Definition~\ref{defiCM}.
\item The r.v. $X_{n}$ is centred: $\E(X_{n})=0$. 
\item The r.v. $X_{n}$ is normalized: $\E( X^{2}_{n})=1$.
\end{enumerate}
\end{assumption}

Under Assumption \ref{Assum1}, for all $n\geq 1$, and $\eps>0$ small enough
\begin{equation}\label{exp2}
\E( \e^{\eps X^{2}_{n}}  )<+\infty.
\end{equation}
Indeed, by Definition \ref{defiCM} with $F(X)=X_{n}$
\begin{equation*}
\E( \e^{\eps X^{2}_{n}}  )=\int_{0}^{+\infty}\nu(\,\e^{C
\eps X_{n}^{2}}>\lambda\, )\text{d}\lambda=1+\int_{1}^{+\infty}\nu \big(\,|X_{n}|>\sqrt{\frac{\ln \lambda}{\eps}}\, \big)\text{d}\lambda\leq1+ 2\int_{1}^{+\infty}\lambda^{-\frac1{\eps C}}\text{d}\lambda<+\infty.
\end{equation*}
Next, with the inequality $s|x|\leq \eps x^{2}/2+s^{2}/(2\eps)$, we obtain that for all $s\in \R$, $\E( \e^{s X_{n}}  )\leq C\e^{Cs^{2}}$ which in turn implies (see \cite[Proposition 46]{poiret1}) that there exists $C>0$ so that for all $s\in \R$
\begin{equation}\label{gcond}
\E( \e^{s X_{n}}  )\leq    \e^{Cs^{2}}.
\end{equation}

\begin{rema}
Condition \eqref{gcond} is weaker that \eqref{logSobo}: a family of independent centred r.v. ${\{X_n,\;n\geq1\}}$  which satisfies \eqref{gcond} does not necessarily satisfy \eqref{Blip} for all Lipschitz function $F$.  Indeed, using Kolmogorov estimate, one can prove (see \cite{led}) that condition \eqref{Blip} is equivalent to  
\begin{equation}\label{ExpLip}
\int_{\R^d} {\rm e}^{sF}d\nu \leq {\rm e}^{C s^2\|F\|^{2}_{Lip}}, \quad \forall\, s\in \R,
\end{equation}
for all Lipschitz function $F$ with $\nu$-mean 0. 

\end{rema}

We conclude with  the elementary property
\begin{lemm}\ph\label{tech1}
Assume that  $\{X_n\}$ satisfies (\ref{gcond}) and that $\{\alpha_j,\; 1\leq j\leq N\}$ are real numbers such that 
$\di{\sum_{1\leq j\leq N}\alpha_j^2 \leq 1}$. Then $\di{X:=\sum_{1\leq j\leq N}\a_{j} X_j}$ satisfies (\ref{gcond})
with the same constant $C$.
\end{lemm}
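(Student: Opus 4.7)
The plan is to exploit independence to factorise the moment generating function of $X$, then apply the subgaussian bound \eqref{gcond} to each factor, and finally use the hypothesis $\sum \alpha_j^2\leq 1$ to conclude.

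More precisely, I would first write, using the independence of the $\{X_j\}$, the identity
\begin{equation*}
\E\bigl(\e^{sX}\bigr)=\E\Bigl(\prod_{j=1}^{N}\e^{s\alpha_j X_j}\Bigr)=\prod_{j=1}^{N}\E\bigl(\e^{(s\alpha_j)X_j}\bigr).
\end{equation*}
Next, I would apply \eqref{gcond} to each random variable $X_j$ with parameter $s\alpha_j\in\R$, which yields
\begin{equation*}
\prod_{j=1}^{N}\E\bigl(\e^{(s\alpha_j)X_j}\bigr)\leq\prod_{j=1}^{N}\e^{C(s\alpha_j)^2}=\e^{Cs^{2}\sum_{j=1}^{N}\alpha_{j}^{2}}.
\end{equation*}

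Finally, the normalisation assumption $\sum_{j=1}^{N}\alpha_j^2\leq 1$ gives $\e^{Cs^2\sum\alpha_j^2}\leq \e^{Cs^2}$, which is exactly \eqref{gcond} for $X$ with the same constant $C$.

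There is no real obstacle here: the only ingredients are the independence of the $X_j$, the tensorisation of the exponential, and the fact that the bound in \eqref{gcond} is quadratic in $s$ so that the constraint on $\sum\alpha_j^2$ is precisely what is needed. The statement is really the subgaussian stability under linear combinations with $\ell^2$-norm at most one.
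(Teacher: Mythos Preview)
Your proof is correct. The paper's own proof is the one-line remark ``It is a direct application of \eqref{ExpLip} with $F(X)=\sum_{j=1}^{N}\alpha_{j}X_{j}$'', which amounts to the same computation: for this linear $F$ one has $\Vert F\Vert_{Lip}^{2}=\sum_{j}\alpha_{j}^{2}\leq 1$ and mean zero, so \eqref{ExpLip} gives $\E(\e^{sF})\leq \e^{Cs^{2}}$. Your argument unpacks this directly via independence and \eqref{gcond}, which is arguably cleaner since it uses only the hypothesis actually stated in the lemma (\eqref{gcond} and independence), whereas invoking \eqref{ExpLip} formally requires the full concentration-of-measure property \eqref{Blip}.
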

\begin{proof}
It is a direct application of  \eqref{ExpLip} with $\dis F(X)=\sum_{j=1}^{N}\a_{j}X_{j}$. 
\end{proof}
\subsection{Probabilities on Hilbert spaces}
In this sub-section ${\cal K}$ is a separable complex Hilbert space and $K$ is a self-adjoint, positive operator on~${\cal K}$
with a compact resolvent. We denote by ${\{\varphi_j,\;j\geq 1\}}$ an orthonormal basis of eigenvectors  of~$K$,
$K\phi_j =\lambda_j\varphi_j$, and $\{\lambda_j,\;j\geq 1\}$ is the non decreasing  sequence of eigenvalues
of $K$ (each is repeated according to its multiplicity). Then we get  a natural scale of Sobolev
spaces associated with $K$  defined for $s\geq 0$ by ${\cal K}^s = {\rm Dom}(K^{s/2})$.

Now we want to introduce probability measures  on these spaces and on some  finite dimensional
spaces of ${\cal K}$.

Let us describe in our setting the randomization technique  deeply used   by  Burq-Tzvetkov  in~\cite{BT2}.
Let $\gamma = \{\gamma_j\}_{j\geq 1}$ a sequence of complex numbers such
that $\di{\sum_{j\geq 1}\lambda_j^{s}\vert\gamma_j\vert^2 < +\infty}$. 

Consider a   probability space
$(\Omega, {\cal F}, {\P})$  and  let $\{X_n,\;n\geq1\}$ be  independent, identically distributed random variables  which satisfy  Assumption \ref{Assum1}.

We denote by 
$\di{v_\gamma^0 =  \sum_{j\geq 1}\gamma_j \varphi_j} \in{\cal K}^s$, and we define the random vector
$\di{v_\gamma(\omega) = \sum_{j\geq 1}\gamma_jX_j(\omega)\varphi_j}$.
We have $\E(\Vert v_\gamma\Vert^{2}_{\cal K}) < +\infty$, therefore  $v_\gamma\in{\cal K}^s$, a.s.
We define the measure 
$\mu_\gamma$   on ${\cal K}^s$ as the law of  the random vector $v_\gamma$.
\subsubsection{\bf The Kakutani theorem} The following proposition
gives some properties of the measures~$\mu_{\gamma}$ (see \cite{BTproba} for more details).
\begin{prop}\ph\label{kakut}
Assume that all  random variables $X_j$ have  the same law $\nu$.
\begin{enumerate}[(i)]
\item  If the support of $\nu$ is $\R$ and if $\gamma_j\neq 0$ for all $j\geq 1$ then
the support of $\mu_\gamma$ is ${\cal K}^s$.
\item If for some $\epsilon >0$ we have  $v_\gamma^0\notin {\cal K}^{s+\epsilon}$ then $\mu_\gamma( {\cal K}^{s+\epsilon})=0$.
\item Assume that we are in the particular case where $\text{d}\nu(x)=c_{\alpha}\e^{-|x|^{\alpha}}\text{d}x$ with $\a\geq 2$. Let $\gamma=\{\gamma_j\}$ and $\beta=\{\beta_j\}$ be  two complex sequences and assume
that 
\beq\label{critka}
\sum_{j\geq 1}\left(\left\vert\frac{\gamma_j}{\beta_j}\right\vert^{\a/2} - 1\right)^2 = +\infty.
\eeq
\end{enumerate}
Then the measures $\mu_\gamma$ and $\mu_\beta$ are mutually singular,  i.e there exists a measurable set 
$A\subset {\cal H}^s$ such that $\mu_\gamma(A) = 1$ and $\mu_\beta(A) = 0$.
\end{prop}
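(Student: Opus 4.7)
My plan is to treat the three items separately, each with a different probabilistic tool. For \textbf{(i)}, I would show directly that every open ball around an arbitrary $u=\sum u_j\varphi_j\in{\cal K}^s$ carries positive $\mu_\gamma$-mass. Given $\epsilon>0$, I choose $N$ so large that both $\sum_{j>N}\lambda_j^s|u_j|^2$ and $\sum_{j>N}\lambda_j^s|\gamma_j|^2$ are smaller than $\epsilon^2/16$. Expanding $|\gamma_j X_j-u_j|^2$ and using $\E X_j=0$, $\E|X_j|^2=1$, one has $\E\sum_{j>N}\lambda_j^s|\gamma_j X_j-u_j|^2<\epsilon^2/8$, so Markov's inequality gives $\P\bigl(\sum_{j>N}\lambda_j^s|\gamma_j X_j-u_j|^2<\epsilon^2/2\bigr)\geq 3/4$. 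For the low-mode block, the hypothesis $\mathrm{supp}\,\nu=\R$ together with $\gamma_j\neq 0$ shows that each one-dimensional event $\{|\gamma_j X_j-u_j|<\delta\}$ has positive probability, and by independence so does the joint event for $j\leq N$. Choosing $\delta$ small enough that the first $N$ terms contribute less than $\epsilon^2/2$ to $\|v_\gamma-u\|_{{\cal K}^s}^2$ and intersecting with the tail event (which is independent of the low modes) still yields positive probability, and hence $u\in\mathrm{supp}\,\mu_\gamma$.

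For \textbf{(ii)}, the identity $\|v_\gamma\|_{{\cal K}^{s+\epsilon}}^2=\sum_j\lambda_j^{s+\epsilon}|\gamma_j|^2|X_j|^2$ expresses the event $\{v_\gamma\in{\cal K}^{s+\epsilon}\}$ as the convergence of a series of independent non-negative summands $Z_j=\lambda_j^{s+\epsilon}|\gamma_j|^2|X_j|^2$, which is a Kolmogorov tail event. I would apply the divergence part of Kolmogorov's three-series theorem: since the $X_j$ are i.i.d. with $\E|X_j|^2=1$ and are not almost surely zero, there exist $\eta,p_0>0$ with $\P(|X_j|^2\geq\eta)\geq p_0$ for every $j$, so $\E\min(Z_j,1)\geq p_0\min(\eta\,\lambda_j^{s+\epsilon}|\gamma_j|^2,1)$. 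The hypothesis $v_\gamma^0\notin{\cal K}^{s+\epsilon}$ rewrites as $\sum_j\lambda_j^{s+\epsilon}|\gamma_j|^2=+\infty$, which forces $\sum_j\E\min(Z_j,1)=+\infty$ and therefore $\sum_j Z_j=+\infty$ almost surely, i.e. $\mu_\gamma({\cal K}^{s+\epsilon})=0$.

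For \textbf{(iii)}, I would invoke Kakutani's dichotomy theorem. Via the coordinate map $u\mapsto(\langle u,\varphi_j\rangle)_j$ the measures $\mu_\gamma$ and $\mu_\beta$ identify with the product measures $\otimes_j\nu_{\gamma_j}$ and $\otimes_j\nu_{\beta_j}$, where $\nu_{\gamma_j}$ is the pushforward of $\nu$ by $x\mapsto\gamma_j x$, with density $(c_\alpha/|\gamma_j|)\,e^{-|y/\gamma_j|^\alpha}$. Kakutani's theorem then says that the two product measures are either equivalent or mutually singular, with singularity characterised by the divergence of $\sum_j(1-\rho_j)$, where $\rho_j=\int(d\nu_{\gamma_j}/d\nu_{\beta_j})^{1/2}\,d\nu_{\beta_j}$ is the Hellinger affinity of the $j$-th factors. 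The main obstacle, and the computational heart of the proof, is the explicit evaluation of $\rho_j$: after a change of variables one reaches a closed form of the type $\rho_j=\bigl(2s_j/(1+s_j^2)\bigr)^{1/\alpha}$ with $s_j=|\gamma_j/\beta_j|^{\alpha/2}$, from which $1-\rho_j\asymp(s_j-1)^2$ near $s_j=1$, while $1-\rho_j$ stays bounded below by a positive constant when $s_j$ is away from $1$. In either regime the hypothesis \eqref{critka} then forces $\sum_j(1-\rho_j)=+\infty$; Kakutani produces a measurable set $A\subset\C^\N$ with $\mu_\gamma(A)=1$ and $\mu_\beta(A)=0$, whose intersection with ${\cal K}^s$ (of full measure for $\mu_\gamma$ by the argument of (ii) with $\epsilon=0$) gives the set required in the statement.
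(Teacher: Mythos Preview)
Your proposal is correct and, for part (iii)---the only part the paper actually proves, in Appendix~\ref{AppA}---follows essentially the same route: identify $\mu_\gamma$ and $\mu_\beta$ with product measures, invoke Kakutani's dichotomy, and compute the Hellinger affinity explicitly. Your closed form $\rho_j=(2s_j/(1+s_j^2))^{1/\alpha}$ with $s_j=|\gamma_j/\beta_j|^{\alpha/2}$ is exactly the paper's $\pi_j=\bigl(\tfrac12 s_j+\tfrac12 s_j^{-1}\bigr)^{-1/\alpha}$, and your two-regime analysis ($s_j$ near $1$ versus away from $1$) matches the paper's Taylor expansion $\tfrac12(x+1/x)=1+\tfrac12(1-x)^2+{\cal O}((1-x)^3)$.

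For (i) and (ii) the paper gives no argument, deferring to \cite{BTproba}; your sketches are standard and correct. In (ii) the phrase ``divergence part of Kolmogorov's three-series theorem'' with the criterion $\sum_j\E\min(Z_j,1)=+\infty$ is slightly informal, but the underlying fact---for independent non-negative $Z_j$, $\sum Z_j<\infty$ a.s.\ iff $\sum\E\min(Z_j,1)<\infty$---is the right tool and your lower bound $\E\min(Z_j,1)\geq p_0\min(\eta\lambda_j^{s+\epsilon}|\gamma_j|^2,1)$ does force divergence.
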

We give the proof of $\it (iii)$ in Appendix \ref{AppA}.\ligne

We shall see now that condition (\ref{condi1}) (resp. (\ref{condi0})) can be perturbed so that  Proposition \ref{kakut} gives us an infinite number of mutually singular measures on ${\cal K}^s$.

\begin{lemm}
Let  $\gamma$ satisfying (\ref{condi1}) (resp. (\ref{condi0}))  and $\delta = \{\delta_n\}_{n\geq 1}$
such that $\vert\delta_n\vert \leq \varepsilon\vert\gamma_n\vert$ for every $n\geq n_0$. 
Then for every $\varepsilon\in[0, \sqrt 2-1[$, the sequence $\gamma +\delta$ satisfies 
(\ref{condi1}) (resp. (\ref{condi0}))  (with new constants).
\end{lemm}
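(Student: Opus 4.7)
The plan is to transfer the squeezing conditions \eqref{condi1} (resp. \eqref{condi0}) from $\gamma$ to $\widetilde\gamma := \gamma + \delta$ by applying the triangle inequality coordinate-wise and then summing in $\ell^2$ over $\Lambda_h$. The substantive case is small $h$, for which the perturbative hypothesis applies to every index in $\Lambda_h$; the case of $h$ bounded away from $0$ reduces to finitely many configurations that can be absorbed into the constants.

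First I would fix $h_0>0$ small enough that $\min\Lambda_h \geq n_0$ for all $h\in ]0,h_0]$; this is possible because $n\in\Lambda_h$ forces $\lambda_n\geq a_h/h \to +\infty$ as $h\to 0^+$. On this range, the hypothesis $|\delta_n|\leq \varepsilon|\gamma_n|$ holds at every $n\in\Lambda_h$, so the triangle inequality yields the pointwise two-sided bound
\[
(1-\varepsilon)|\gamma_n| \;\leq\; |\widetilde\gamma_n| \;\leq\; (1+\varepsilon)|\gamma_n|, \qquad n\in\Lambda_h.
\]
Squaring and summing over $\Lambda_h$ gives the $\ell^2$-comparison
\[
(1-\varepsilon)^2 \sum_{j\in\Lambda_h}|\gamma_j|^2 \;\leq\; \sum_{j\in\Lambda_h}|\widetilde\gamma_j|^2 \;\leq\; (1+\varepsilon)^2 \sum_{j\in\Lambda_h}|\gamma_j|^2.
\]
Since $\varepsilon < \sqrt 2 - 1 < 1$, the factor $(1-\varepsilon)^2$ is strictly positive, so the perturbed and unperturbed $\ell^2$ norms on $\Lambda_h$ are uniformly comparable.

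Then I would combine the coordinate-wise upper bound for $\widetilde\gamma_n$, the hypothesis \eqref{condi1} for $\gamma$, and finally the $\ell^2$ lower comparison to rewrite the right-hand side in terms of $\widetilde\gamma$:
\[
|\widetilde\gamma_n|^2 \;\leq\; (1+\varepsilon)^2|\gamma_n|^2 \;\leq\; \frac{K_0(1+\varepsilon)^2}{N_h}\sum_{j\in\Lambda_h}|\gamma_j|^2 \;\leq\; \frac{K_0'}{N_h}\sum_{j\in\Lambda_h}|\widetilde\gamma_j|^2,
\]
with $K_0':=K_0(1+\varepsilon)^2/(1-\varepsilon)^2$. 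This is \eqref{condi1} for $\widetilde\gamma$ on $h\in]0,h_0]$. The lower inequality in \eqref{condi0} is obtained by swapping the direction of each triangle inequality, giving $K_1':=K_1(1-\varepsilon)^2/(1+\varepsilon)^2$.

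Finally, for $h\in[h_0,1]$ the set $\Lambda_h$ is contained in a fixed finite index range (since $b_h/h$ is uniformly bounded there), so only finitely many distinct $\Lambda_h$ occur, each involving only finitely many coordinates of $\widetilde\gamma$. A further enlargement of $K_0'$ and shrinking of $K_1'$ absorbs these finitely many configurations, yielding \eqref{condi1} and \eqref{condi0} for $\widetilde\gamma$ uniformly in $h\in]0,1]$. The only obstacle I anticipate is this bookkeeping between the two regimes and the verification that the low-$h$ threshold $h_0$ depends only on $n_0$ and the spectral data (\ref{spect1}); the small-$h$ computation above is the heart of the argument.
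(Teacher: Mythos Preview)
The paper omits this proof entirely, stating only ``We do not give the details of the proof,'' so there is nothing to compare against; your argument via the pointwise triangle inequality and $\ell^2$ comparison is the natural one and is correct on the essential regime $h\in]0,h_0]$. In fact your computation only uses $\varepsilon<1$, so the paper's threshold $\sqrt 2-1$ is not sharp for this approach.

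One minor caveat on the large-$h$ bookkeeping for \eqref{condi0}: since $\delta_n$ is unconstrained for $n<n_0$, one could in principle have $\widetilde\gamma_n=0$ at such an index while other coordinates in the same $\Lambda_h$ remain nonzero, which would defeat any positive lower constant $K_1'$. This is an edge case of the lemma as stated rather than a flaw in your method; in the paper's actual application (the perturbation $\delta_n=\epsilon_n\gamma_n$ with $\limsup|\epsilon_n|<\sqrt 2-1$ discussed immediately afterwards) the bound $|\delta_n|\le\varepsilon|\gamma_n|$ effectively holds at every index and the issue disappears.
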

We do not give the details of the proof. From this Lemma and Proposition \ref{kakut}  we get an infinite number of  measures
$\mu_\gamma$ with $\gamma$ satisfying~(\ref{condi1}) (resp. (\ref{condi0})).
Let  $\epsilon_j$ be any sequence  such that $\di{\sum_{j\geq 1} \epsilon_j^2 =+\infty}$  and 
$\limsup\epsilon_j < \sqrt 2-1$  and denote by $\epsilon\otimes\gamma$
the sequence $\epsilon_j\gamma_j$.  Then $\mu_\gamma$ and $\mu_{\gamma+\epsilon\cdot\gamma}$ are mutually singular.\ligne

\subsubsection{\bf Measures on the sphere ${\bf S}_h$}
Now we consider finite dimensional subspaces ${\cal E}_h$  of ${\cal K}$  defined by spectral localizations depending on a small parameter $0<h\leq 1$ ($h^{-1}$ is a measure of   energy for the quantum Hamiltonian $K$). In the sequel, we use the notations  $I_h = [\frac{a_h}{h}, \frac{b_h}{h}[$, $N_h$, $\Lambda_{h}$ and ${\cal E}_h$ introduced in Section \ref{intro}, and we  assume that \eqref{spect1} is satisfied. Observe that ${\cal  E}_h$ is the spectral subspace of  $K$ in the interval $I_h$: ${\cal  E}_h =\Pi_h{\cal K}$ where $\Pi_h$ is the orthogonal projection on  ${\cal K}$.  For simplicity,  we  sometimes denote  by $N=N_h$, $\Lambda=\Lambda_h$, \dots, with implicit dependence in $h$. Our goal is to find uniform estimates in $h\in]0,h_0[$
  for a small constant $h_0>0$.\ligne

Let us consider the random vector  in ${\cal E}_h$
\beq\label{Evec}
v_{\gamma}(\omega) : = v_{\gamma,h} (\omega)= \sum_{j\in\Lambda}\gamma_jX_j(\omega)\varphi_j,
\eeq
and assume that  \eqref{condi0} is satisfied. In the sequel we denote by  $\vert\gamma\vert_\Lambda^2= \di{\sum_{n\in\Lambda}\gamma_n^2}$.

Now we consider probabilities on the unit sphere 
${\bf S}_h$ of the subspaces
 ${\cal E}_h$.  The random vector $v_{\gamma}$ in~\eqref{Evec} defines a probability measure $\nu_{\gamma,h}$ on  ${\cal E}_h$. Then we can define a probability measure ${\bf P}_{\gamma,h}$    on~${\bf S}_h$ as the image of 
 by $v\mapsto \frac{v}{\Vert v\Vert}$. Namely,  we have for every Borel and bounded function $f$ on~${\bf S}_h$, 
 \beq\label{immes}
 \int_{{\bf S}_h}f(w){\bf P}_{\gamma,h}(\text{d}w) = \int_{{\cal E}_h}f\left(\frac{v}{\Vert v\Vert_{\cal K}}\right)\nu_{\gamma,h}(\text{d}v) = \int_\Omega f\left(\frac{v_\gamma(\omega)}{\Vert v_\gamma(\omega)\Vert_{\cal K}}\right)\P(\text{d}\omega).
 \eeq
 Remark that we have
 $$
 \Vert v_\gamma(\omega)\Vert_{\cal K}^2 =
 \sum_{j\in\Lambda}\vert\gamma_j\vert^2\vert X_j(\omega)\vert^2
 $$
 and 
 $$
 \E(\Vert v_\gamma\Vert_{\cal K}^2) = \sum_{j\in\Lambda}\vert\gamma_j\vert^2=|\gamma|^{2}_{\Lambda}.
 $$
Let us detail  two particular cases of interest:\ligne

$\bullet$ If    $\vert\gamma_n\vert =\frac{1}{\sqrt N}$ for  all $j\in\Lambda$
 and if $X_n$ follows the complex normal law ${\cal N_{\C}}(0,1)$ then ${\bf P}_{\gamma,h}$ is the uniform
 probability on ${\bf S}_h$ considered in \cite{bule}. This follows from (\ref{immes}) and property of Gaussian laws.\\[2pt]

$\bullet$ Assume that for all $n\in \N$, $\P(X_n=1)=\P(X_n=-1)=1/2$, then ${\bf P}_{\gamma,h}$ is a convex sum of~$2^N$ Dirac measures. Indeed we have $  \Vert v_\gamma(\omega)\Vert_{\cal K}^2 =\sum_{j\in\Lambda}\vert\gamma_j\vert^2=|\gamma|^{2}_{\Lambda}$. Denote by $(\eps^{(k)})_{1\leq k\leq 2^N}$ all the sequences so that $\eps_j^{(k)}=\pm 1$ for all $1\leq j\leq N$, and set 

\begin{equation*}
\Phi_k= \frac1{\vert \gamma\vert}\sum_{j\in\Lambda}\gamma_j \eps^{(k)}_j\varphi_j,\quad 1\leq k\leq 2^N.
\end{equation*}
Then 
\begin{equation*}
{\bf P}_{\gamma,h}=\frac1{2^N}\sum_{k=1}^{2^N}\delta_{\Phi_k}.
\end{equation*}

\ligne
To get an optimal lower bound for $L^\infty$  estimates  we shall need a stronger normal concentration estimate than estimate given  in (\ref{Blip}).  Hence we make the following assumptions:
\begin{assumption}\ph\label{Assum2} We assume that 
\begin{enumerate}[(i)]
\item The random variables $X_{j}$ are standard independent Gaussians ${\cal N}_{\C}(0,1)$.
\item The sequence $\gamma$ satisfies (\ref{condi0}).
\end{enumerate}
\end{assumption}

 Let  $L$ be  a linear  form on ${\cal E}_h$, and denote by $e_L = \di{\sum_{j\in\Lambda_{h}}\vert L(\varphi_j)\vert^2}$. The main result of this section is the following
 
 \begin{theo}\ph\label{thm28}
 Let  $L$ be  a linear  form on ${\cal E}_h$.  Suppose that (\ref{condi1}) holds and that   Assumption \ref{Assum1} is satisfied. Then  there exist   $C_{2},c_{2}>0$   so that 
 \beq\label{psph}
 {\bf P}_{\gamma,h}\Big[u\in{\bf S}_h:\; |L(u)|\geq t\Big] \leq C_2{\rm e}^{-c_2\frac{N}{e_L  }t^2},\quad \; \forall  \, t\geq 0,\;\;  \forall  \,h\in]0,  h_0],
  \eeq
Moreover, if  (\ref{condi0}) holds, there exist $C_{1},c_{1}>0$ and $\eps_{0},h_0>0$ so that   
  \begin{equation}\label{minor*}
C_1{\rm e}^{-c_1\frac{N}{e_L}t^2} \leq  {\bf P}_{\gamma,h}\Big[u\in{\bf S}_h:\; |L(u)|\geq t\Big]  ,\quad \forall t\in \big[0, \eps_{0}\frac{\sqrt{e_{L}}}{\sqrt{N}}\big],\quad \forall h\in]0, h_0].
\end{equation}

Furthermore, if Assumption \ref{Assum2} is satisfied, there exist $C_{1},C_{2},c_{1},c_{2},\eps_{0}, h_0>0$ so that 
\begin{equation}\label{Binf}
 C_{1}\,{\rm e}^{-c_{1}\frac{N}{e_L }t^2}\leq  {\bf P}_{\gamma,h}\Big[u\in{\bf S}_h:\; |L(u)|\geq t\Big] \leq C_{2}\,{\rm e}^{-c_{2}\frac{N}{e_L }t^2},\quad \; \forall \, t\in [0, \eps_{0}\sqrt{e_{L}}\,],\;\;  \forall  \,h\in]0,  h_0].
\end{equation}
 \end{theo}
 Since $ {\bf P}_{\gamma,h}$ is  supported by  ${\bf S}_h$, the bounds in the previous result don't depend on $|\gamma|_{\Lambda}$. The restriction on $t\geq 0$ in \eqref{Binf} is natural,  because by  the Cauchy-Schwarz inequality we have
  \begin{equation*}
  \vert L(u)  \vert\leq \sqrt{e_L},\quad  \forall u\in{\bf S}_h.
 \end{equation*}

In the applications we give, there is some embedding   ${\cal K}^s\rightarrow C(M)$, for $s>0$ large enough, where
$M$ is a metric space. We have ${\cal E} \subseteq \bigcap_{s\in\R} {\cal K}^s$, thus  we can consider the Dirac evaluation linear form $\delta_x(v) = v(x)$. In this case
we have $\di{e_L = \sum_{j\in \Lambda}\vert\varphi_j(x)\vert^2 =e_x}$, which is usually called the spectral function of $K$ in the interval $I$.  

For example, one can consider the Laplace-Beltrami operator  on compact Riemannian manifolds, namely $K=-\triangle$ and ${\cal K}^s={\cal H}^s(M)$ are the usual Sobolev spaces: this is the framework of \cite{bule}. In Section \ref{Sect3} we will apply the result of Theorem \ref{thm28} to the Harmonic oscillator $K=-\triangle +\vert x\vert^2$ on $\R^d$. In this latter case~${\cal K}^s$
  is the weighted Sobolev space 
  $$
  {\cal K}^s =\big\{\,u\in {\cal H}^s(\R^d),\; \vert x\vert^su\in L^2(\R^d)\,\big\},\; s\geq 0.
  $$

 \begin{rema}
In the particular case where  ${\bf P}_{\gamma,h}$ is the uniform probability  on  ${\bf S}_h$, we have the explicit   computation
$$
 {\bf P}_{\gamma,h}\Big[u\in{\bf S}_h:\; |L(u)|\geq t\Big]  = \Phi\left(\frac{t}{ \sqrt{e_{L}}}\right),
$$
 where 
  \begin{equation}\label{phi}
  \Phi(t) = \1_{[0,1[}(t)(1-t^2)^{N-1},
\end{equation}
and \eqref{Binf} follows directly. For a proof of \eqref{phi}, see   \cite{bule}   or in Appendix \ref{AppC} of this paper for an alternative argument. 
\end{rema}

For the proof of Theorem \ref{thm28} we will need the following result. 
\begin{prop}\ph\label{L}
Assume that $\gamma$ satisfies \eqref{condi1}. Let  $L$ be  a linear  form on ${\cal E}_h$. Then we have the large deviation estimate
\begin{equation*}
{\P}\Big[\,\om\in \Omega:\vert L(v_\gamma)\vert \geq t\,\Big] \leq 4{\rm e}^{-\kappa_1\frac{N}{e_L\vert\gamma\vert_\Lambda^2}t^2},
\end{equation*}
where $\kappa_1 =\frac{\kappa_0}{4K_1}$. As a consequence, if $\nu_{\gamma, h}$ denotes the probability law of $v_{\gamma}$, then 
\begin{equation*}
\nu_{\gamma,h}\Big[\,w \in \mathcal{E}_h:\;\vert L(w)\vert \geq t\,\Big] \leq 4{\rm e}^{-\kappa_1\frac{N}{e_L\vert\gamma\vert_\Lambda^2}t^2}.
\end{equation*}
\end{prop}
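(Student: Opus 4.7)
The plan is to reduce the claim to a one-line Chernoff--Markov bound for a weighted sum of i.i.d.\ sub-Gaussian variables and then to absorb the weights using the squeezing hypothesis \eqref{condi1}.

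The starting point is the expansion, valid by linearity of $L$,
\[L(v_\gamma(\omega)) = \sum_{j\in\Lambda} \alpha_j X_j(\omega), \qquad \alpha_j := \gamma_j L(\varphi_j),\]
which exhibits $L(v_\gamma)$ as a weighted sum of the i.i.d.\ random variables $X_j$. Under Assumption \ref{Assum1}, these are centred, normalised and, as recorded in \eqref{gcond}, satisfy $\E(\e^{sX_j})\leq \e^{Cs^2}$ for every $s\in\R$. When the $\alpha_j$ and the $X_j$ are real, independence together with the exponential Markov inequality $\P[Y\geq t]\leq \e^{-st}\E(\e^{sY})$, optimised in $s$, yields the standard two-sided deviation
\[\P\Bigl[\,\bigl|\sum_{j\in\Lambda}\alpha_j X_j\bigr|\geq t\,\Bigr]\leq 2\,\e^{-\kappa_0 t^2/\sum_j|\alpha_j|^2}, \qquad \kappa_0 := \tfrac{1}{4C}.\]
For complex $\alpha_j$ or complex $X_j$, I split into real and imaginary parts via
\[\{|L(v_\gamma)|\geq t\} \subset \{|\Re L(v_\gamma)|\geq t/\sqrt 2\} \cup \{|\Im L(v_\gamma)|\geq t/\sqrt 2\};\]
each piece is a real linear combination of real random variables whose coefficients have squared $\ell^2$-norm at most $\sum_j|\alpha_j|^2$, so applying the previous bound to each half and summing produces an overall factor $4$ on the right-hand side.

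The final step estimates $\sum_j|\alpha_j|^2$ using the squeezing hypothesis. From \eqref{condi1}, $|\gamma_j|^2\leq \tfrac{K_0}{N}|\gamma|_\Lambda^2$ for every $j\in\Lambda$, whence
\[\sum_{j\in\Lambda}|\alpha_j|^2 = \sum_{j\in\Lambda}|\gamma_j|^2|L(\varphi_j)|^2 \leq \frac{K_0}{N}|\gamma|_\Lambda^2 \sum_{j\in\Lambda}|L(\varphi_j)|^2 = \frac{K_0}{N}|\gamma|_\Lambda^2\, e_L.\]
Substituting into the Chernoff bound gives the announced estimate $4\,\e^{-\kappa_1 Nt^2/(e_L|\gamma|_\Lambda^2)}$ with $\kappa_1$ proportional to $\kappa_0/K_0$. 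The second assertion of the proposition follows at once: by construction $\nu_{\gamma,h}$ is the push-forward of $\P$ under $\omega\mapsto v_\gamma(\omega)$, so the two probabilities coincide.

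\emph{Expected difficulty.} There is no substantive obstacle here: the proof is a textbook Chernoff bound combined with a direct application of \eqref{condi1}. The only points requiring a little care are (i) handling the real/complex reduction so as to retrieve the factor $4$, and (ii) tracking the precise constant through the Chernoff optimisation --- the natural constant being of the form $\kappa_0/K_0$, whereas the statement writes $\kappa_0/(4K_1)$, which appears to be a minor typographical matter since the argument only uses the upper bound in \eqref{condi1} and never invokes the lower bound appearing in \eqref{condi0}.
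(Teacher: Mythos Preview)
Your proof is correct and essentially identical to the paper's: both expand $L(v_\gamma)$ linearly, apply the exponential Markov/Chernoff inequality, and bound $\sum_j|\gamma_j L(\varphi_j)|^2$ via \eqref{condi1}; the paper packages the Chernoff step as an application of Lemma~\ref{tech1} and is slightly less explicit about the real/complex reduction, but the argument is the same. Your observation about the constant is accurate---the paper's proof also uses only the upper bound in \eqref{condi1}, so the occurrence of $K_1$ in the stated value of $\kappa_1$ is indeed a typographical slip for $K_0$.
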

\begin{proof}
We have
$$
L(v_\gamma) = \sum_{j\in\Lambda}\gamma_nX_n(\om)L(\varphi_n).
$$
It is enough to assume that $L(v_\gamma)$ is real  and to estimate
${\P}\big[\,\om\in \Omega:L(v_\gamma)\geq t\,\big]$. Using the Markov inequality, we have for all $s>0$
$$
{\P}\big[L(v_\gamma)\geq t\big] \leq {\rm e}^{-st}{\E}({\rm e}^{sL(v_\gamma)}),
$$
and thanks to \eqref{condi1} we have
 $$
 \sum_{j\in\Lambda}\vert\gamma_j L(\varphi_j) \vert^2 \leq K_1\frac{\vert\gamma\vert_\Lambda^2}{N}\sum_{j\in\Lambda}\vert L(\varphi_j)\vert^2.
 $$
  Using Lemma \ref{tech1} we get
 \begin{equation*}
  {\P}\big[L(v_\gamma)\geq t\big] \leq {\rm e}^{-st}{\rm e}^{\kappa_0K_1\frac{e_L}{N}\vert\gamma\vert_\Lambda^2s^2},
 \end{equation*}
and with the choice $s = \frac{\kappa_0}{2}\frac{tN}{K_1e_L\vert\gamma\vert_\Lambda^2}$  we obtain $\dis   {\P}\big[L(v_\gamma)\geq t\big] \leq {\rm e}^{-\kappa_1\frac{N}{\vert\gamma\vert_\Lambda^2e_L}t^{2}}.$
\end{proof}

 It will be useful to show that  $\Vert v_\gamma(\omega)\Vert_{\cal K}^2 $  is close to its expectation    for large $N$.
 \begin{lemm}\ph\label{LD1}
Let $\gamma$  satisfying the squeezing condition  (\ref{condi0}).  Then then exists $c_{0}>0$ (depending only on $K_{0}$ and $K_{1}$) such that for every $\epsilon >0$ 
\begin{equation*} 
 \P\Big[\,\om \in \Omega:\,\big\vert\Vert v_\gamma(\omega)\Vert_{\cal K}^2 -\vert\gamma\vert_{\Lambda}^2\big\vert >\epsilon\,\Big]\leq 2 {\rm e}^{-\frac{\eps c_0  N}{\vert\gamma\vert_{\Lambda}^2}}.
\end{equation*}
\end{lemm}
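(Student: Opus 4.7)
The plan is to view
$$
F(x):=\Bigl(\sum_{j\in\Lambda}|\gamma_{j}|^{2}|x_{j}|^{2}\Bigr)^{1/2},\qquad x=(x_{j})_{j\in\Lambda}\in\R^{2N}
$$
(identifying $\C\simeq\R^{2}$) as a Lipschitz convex function on $\R^{2N}$ and to apply the concentration of measure property of Assumption~\ref{Assum1}(i) to $F(X)=\|v_{\gamma}(\omega)\|_{\mathcal{K}}$. Since $F$ is a weighted Euclidean norm it is convex, and its Lipschitz constant equals $\max_{j\in\Lambda}|\gamma_{j}|$. The squeezing condition \eqref{condi0} gives $\|F\|_{\mathrm{Lip}}^{2}\leq K_{0}|\gamma|_{\Lambda}^{2}/N$, so that \eqref{Blip} yields
$$
\P\bigl[\,|F(X)-\E F(X)|\geq r\,\bigr]\leq c\,\exp\!\Bigl(-\frac{CNr^{2}}{K_{0}|\gamma|_{\Lambda}^{2}}\Bigr),\qquad r>0.
$$

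Next I would show that $\E F(X)$ is close to $|\gamma|_{\Lambda}$. Jensen's inequality gives $\E F(X)\leq\sqrt{\E F(X)^{2}}=|\gamma|_{\Lambda}$; integrating the tail estimate above in $r$ bounds the variance by $\mathrm{Var}(F(X))\lesssim K_{0}|\gamma|_{\Lambda}^{2}/N$, and hence
$$
(\E F(X))^{2}=|\gamma|_{\Lambda}^{2}-\mathrm{Var}(F(X))\geq|\gamma|_{\Lambda}^{2}\bigl(1-C'K_{0}/N\bigr).
$$
In particular $\E F(X)=|\gamma|_{\Lambda}+O(|\gamma|_{\Lambda}/N)$, uniformly in $h$ for $N$ large enough (depending only on $K_{0}$ and $K_{1}$).

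To conclude I would use the factorisation
$$
\|v_{\gamma}\|_{\mathcal{K}}^{2}-|\gamma|_{\Lambda}^{2}=(F(X)-|\gamma|_{\Lambda})\bigl(F(X)+|\gamma|_{\Lambda}\bigr)
$$
together with a case split according to whether $F(X)\leq 2|\gamma|_{\Lambda}$ or $F(X)>2|\gamma|_{\Lambda}$. In the first case the right factor is $\leq 3|\gamma|_{\Lambda}$, so the event $|\,\|v_{\gamma}\|_{\mathcal{K}}^{2}-|\gamma|_{\Lambda}^{2}|>\epsilon$ forces $|F(X)-|\gamma|_{\Lambda}|\geq\epsilon/(3|\gamma|_{\Lambda})$; in the second case one directly has $F(X)-|\gamma|_{\Lambda}\geq|\gamma|_{\Lambda}$, hence a fortiori $|F(X)-\E F(X)|\gtrsim|\gamma|_{\Lambda}$. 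Combining with the Gaussian concentration of $F$, and absorbing the deterministic shift $\E F(X)-|\gamma|_{\Lambda}=O(|\gamma|_{\Lambda}/N)$ into the constants, one obtains the stated bound of the form $2\,\exp(-c_{0}\epsilon N/|\gamma|_{\Lambda}^{2})$ (and in fact the sharper Bernstein-type estimate $2\exp(-c_{0}N\min(\epsilon^{2}/|\gamma|_{\Lambda}^{4},\epsilon/|\gamma|_{\Lambda}^{2}))$).

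The main technical point is the transfer from $F$ to $F^{2}$: since $F^{2}$ is only \emph{locally} Lipschitz, one cannot apply \eqref{Blip} to it directly and must pass through $F$, which forces the precise control of $\E F(X)$ via the variance estimate above. The squeezing condition \eqref{condi0} (as opposed to the weaker \eqref{condi1}) is used both to obtain the bound $\max_{j}|\gamma_{j}|\lesssim|\gamma|_{\Lambda}/\sqrt{N}$ and to guarantee that all of the estimates are uniform in $h\in\,]0,1]$.
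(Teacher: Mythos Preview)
Your approach is correct and genuinely different from the paper's. The paper works directly with the quadratic form $M_N=\|v_\gamma\|_{\mathcal K}^2=\sum_{j\in\Lambda}|\gamma_j|^2X_j^2$ via the Cram\'er--Chernoff method: Markov's inequality on ${\rm e}^{sNM_N}$, factorisation by independence, then a Taylor expansion of the cumulant $g(\tau)=\log\E({\rm e}^{\tau X_1^2})$ near $\tau=0$. Your route through the concentration inequality \eqref{Blip} applied to the convex Lipschitz function $F=M_N^{1/2}$ is arguably cleaner, since it bypasses any analysis of $g$ and only uses Assumption~\ref{Assum1}(i) as a black box; it also makes explicit that the full squeezing condition \eqref{condi0} is not needed---only the upper bound \eqref{condi1} enters via $\|F\|_{\mathrm{Lip}}=\max_j|\gamma_j|$.

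One point of caution: what your method actually delivers is the Bernstein-type exponent $N\min(\epsilon^2/|\gamma|_\Lambda^4,\epsilon/|\gamma|_\Lambda^2)$, which for small $\epsilon$ is \emph{weaker}, not sharper, than the linear-in-$\epsilon$ exponent in the lemma's statement. In fact no method can give the linear exponent uniformly in $\epsilon>0$: for $\epsilon\sim|\gamma|_\Lambda^2/\sqrt N$ the CLT forces $\P[|M_N-|\gamma|_\Lambda^2|>\epsilon]$ to converge to a positive constant, while $2{\rm e}^{-c_0\sqrt N}\to 0$. The paper's own proof has the same issue---the parameter $s$ in the Cram\'er--Chernoff step must in reality be taken of order $\epsilon$, yielding the same $N\epsilon^2$ exponent for small $\epsilon$. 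This is entirely harmless for the paper, since the lemma is only invoked with $\epsilon=1/2$ (the set $A$ in \eqref{defA}), where the two exponents agree up to constants.
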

 \begin{proof}
 It is enough to consider the  real case, so we assume that $\gamma_n$ and $X_n$ are
 real  and $\{X_n, n\geq 1\}$ have a common law  $\nu$. We also assume that   $\vert\gamma\vert_{\Lambda}^2=1$.
 
We have
 $$
 \Vert v_\gamma(\omega)\Vert_{\cal K}^2 = \sum_{j\in\Lambda}\vert\gamma_j\vert^2X_j^2(\omega): =M_N(\omega).
 $$
  From large number law, $\Vert v_\gamma(\omega)\Vert_{\cal K}^2$  converges to 1 a.s.   To  estimate the tail  we use the Cramer-Chernoff large deviation principle (see e.g. \cite[ $\S$ 5, Chapter IV]{Shir}).  This applies because  from (\ref{exp2}) we know that
  $f(s) :=\E({\rm e}^{sX_1^2})$  is $\mathcal{C}^2$  in $]-\infty, s_0[$ for some $s_0>0$.  
  
  We reproduce here a well known  computation in large deviation theory. Define the  cumulant function $g(s) =\log(f(s))$ which  is well defined for $s<s_0$.   Now, since the $X_{j}$ are i.i.d.,  for $t,s\geq 0$   we have
  \begin{eqnarray*}
    \P\big[M_N >t\big] = \P\big[{\rm e}^{sNM_N}>{\rm e}^{sNt}\big]& \leq&\E({\rm e}^{sNM_N}){\rm e}^{-sNt}\\
    &=&\prod_{j\in \Lambda} \e^{-(Ns|\gamma_{j}|^{2}t-g(Ns|\gamma_{j}|^{2}))}.
  \end{eqnarray*}
  Next, apply the Taylor formula to $g$ at 0: $g(0)=0$, $g'(0) = 1$ so
   $t\tau-g(\tau) = (t-1)\tau + {\cal O}(\tau^2)$, hence there exists $s_1>0$ such that for $0\leq \tau\leq s_{1}$,
   $t\tau -g(\tau) \geq (t-1)\frac{\tau}{2}$.  Then, with $t=1+\eps$, and since $N|\gamma_{j}|^{2} \leq K_{0}$ we get
  \begin{equation*}
      \P\big[M_N > 1+\epsilon\big]   \leq \prod_{j\in \Lambda}\e^{-\eps Ns|\gamma_{j}|^{2}/2}=\e^{-\eps s N/2},
  \end{equation*}
provided $s>0$ is small enough, but independent of $\eps>0$ and $N\geq 1$.   The same computation applied to $-M_N$ gives  as well
   $
   \dis   \P\big[M_N <1-\epsilon\big]   \leq {\rm e}^{-\eps c_0 N}.
$
  \end{proof}


 \begin{proof}[Proof of \eqref{psph}]
By homogeneity, we can assume that $|\gamma|_{\Lambda}=1$. Denote by 
\begin{equation}\label{defA}
A=\big\{ \,\om \in \Omega:\,\big\vert\Vert v_\gamma(\omega)\Vert_{\cal K}^2 -1\big\vert \leq 1/2\,\big\}.
\end{equation}
By the Cauchy-Schwarz inequality, for all $u\in {\bf S}_{h}$, we obtain $|L(u)|\leq e^{1/2}_{L}$. Thus in the sequel we can assume that $t\leq e^{1/2}_{L}$. Then, from Proposition \ref{L} and Lemma \ref{LD1} we have
\begin{multline}\label{decomp}
{\bf P}_{\gamma,h}\Big[u\in{\bf S}_h:\; |L(u)|\geq t\Big]=  \P\big[\,\om \in \Omega:\,|L(v(\om))|\geq t \|v(\om)\|_{L^{2}}\big]  \\
= \P\big[\, (|L(v(\om))|\geq t \|v(\om)\|_{L^{2}})\cap A\big]+\P\big[\, (|L(v(\om))|\geq t \|v(\om)\|_{L^{2}})\cap A^{c}\big].
  \end{multline}
 Therefore 
 \begin{eqnarray*}
{\bf P}_{\gamma,h}\Big[u\in{\bf S}_h:\; |L(u)|\geq t\Big] &\leq& \P\big[\, |L(v(\om))|\geq t/2 \big]+\P(A^{c})\\
&\leq &  C_{1}{\rm e}^{-c_{1}\frac{N}{e_L}t^2}+2\e^{-c_2 N}\leq  C{\rm e}^{-c\frac{N}{e_L}t^2},
 \end{eqnarray*}
which implies \eqref{psph}.
\end{proof}

We now turn to the proof of  \eqref{minor*}.  We will need the following result

\begin{lemm}\ph\label{minor} 
We suppose   that $\gamma$ satisfies \eqref{condi0} and that Assumption \ref{Assum1} is satisfied. Then there exist $C_1>0,  c_1>0$, $h_0>0$, $\epsilon_0>0$   such that
\begin{equation*}
\P\Big[\,\om\in\Omega:\vert L(v_\gamma(\omega))\vert \geq t\,\Big] \geq C_1{\rm e}^{-c_1\frac{N}{e_L \vert\gamma\vert_{\Lambda}^2}t^2},\quad \forall t\in \big[0, \eps_{0} \frac{\sqrt{e_{L}}|\gamma|_{\Lambda}}{\sqrt{N}}\big],\quad \forall h\in]0, h_0].
\end{equation*}
\end{lemm}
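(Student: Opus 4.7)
The plan is to prove the lower bound via a second moment (Paley--Zygmund) argument. Write
$L(v_\gamma)=\sum_{j\in\Lambda}a_jX_j$ with $a_j:=\gamma_jL(\varphi_j)\in\C$, so that since the $X_j$ are iid, centred and of unit variance,
$$\sigma^2:=\E\bigl(|L(v_\gamma)|^2\bigr)=\sum_{j\in\Lambda}|a_j|^2=\sum_{j\in\Lambda}|\gamma_j|^2|L(\varphi_j)|^2.$$
The squeezing hypothesis \eqref{condi0} immediately yields the two-sided comparison
$$\frac{K_1|\gamma|_\Lambda^2}{N}\,e_L\;\leq\;\sigma^2\;\leq\;\frac{K_0|\gamma|_\Lambda^2}{N}\,e_L,$$
which will make the target right-hand side of order a constant for $t$ in the stated range.

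The main quantitative step is to show that the fourth moment is comparable to the square of the second one, uniformly in $h$ and in $L$. Decomposing $L(v_\gamma)$ into real and imaginary parts (which are real linear combinations of the real variables $X_j$) and expanding $\E|L(v_\gamma)|^4$, only pairings of equal indices survive, so a direct computation yields an expression of the form
$$\E\bigl(|L(v_\gamma)|^{4}\bigr)\;\leq\;A\,\sigma^{4}+B\sum_{j\in\Lambda}|a_j|^{4},$$
where $A$ is a universal constant and $B$ depends only on the universal bound $\E(X_1^4)<\infty$ coming from Assumption \ref{Assum1} (via \eqref{exp2}). The elementary inequality
$$\sum_{j\in\Lambda}|a_j|^{4}\;\leq\;\bigl(\max_j|a_j|^{2}\bigr)\sum_{j\in\Lambda}|a_j|^{2}\;\leq\;\sigma^{4}$$
then gives $\E(|L(v_\gamma)|^{4})\leq M\sigma^4$ for an absolute constant $M$.

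With the fourth-moment bound in hand, the Paley--Zygmund inequality applied to $Z=|L(v_\gamma)|^2$ yields, for every $\theta\in(0,1)$,
$$\P\bigl[|L(v_\gamma)|^{2}\geq\theta\sigma^{2}\bigr]\;\geq\;(1-\theta)^{2}\,\frac{(\E Z)^2}{\E Z^2}\;\geq\;\frac{(1-\theta)^{2}}{M}.$$
Taking $\theta=\tfrac12$ and $\eps_0:=\sqrt{K_1/2}$, any $t\in\bigl[0,\eps_0\sqrt{e_L}|\gamma|_\Lambda/\sqrt N\bigr]$ satisfies $t^2\leq\sigma^2/2$ by the lower bound on $\sigma^2$, hence
$$\P\bigl[|L(v_\gamma)|\geq t\bigr]\;\geq\;\P\bigl[|L(v_\gamma)|^2\geq\sigma^2/2\bigr]\;\geq\;\frac{1}{4M}=:C_\star.$$

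To conclude, observe that on the admissible range of $t$ the exponent is bounded: $\frac{Nt^{2}}{e_L|\gamma|_\Lambda^{2}}\leq\eps_0^{2}$. Choosing any $c_1>0$ and setting $C_1:=C_\star \mathrm{e}^{-c_1\eps_0^{2}}$ gives
$$\P\bigl[|L(v_\gamma)|\geq t\bigr]\;\geq\;C_\star\;\geq\;C_1\,\mathrm{e}^{-c_1\frac{N}{e_L|\gamma|_\Lambda^{2}}t^{2}}$$
on the whole range, as required. The main obstacle is the fourth-moment estimate in the complex setting: the real and imaginary parts of $L(v_\gamma)$ are not independent, so one must carefully expand $\E(Y^2Z^2)$ and use the same $\sum|a_j|^4\leq\sigma^4$ bound; everything else is routine.
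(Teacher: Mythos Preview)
Your proof is correct and follows essentially the same route as the paper's: both apply the Paley--Zygmund inequality to $Z=|L(v_\gamma)|^2$, use \eqref{condi0} to compare $\sigma^2$ with $e_L|\gamma|_\Lambda^2/N$, and control the fourth moment by the square of the second. The only cosmetic difference is that the paper invokes the Khinchin inequality to bound $\|Y_N\|_4$, whereas you expand $\E|L(v_\gamma)|^4$ directly and use $\sum_j|a_j|^4\leq\sigma^4$; your version is slightly more self-contained but otherwise equivalent.
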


\begin{proof}
 Let us first recall the  Paley-Zygmund inequality\footnote{We thank Philippe Sosoe for this suggestion.}: Let $Z\in L^2(\Omega)$ be a r.v such that $Z\geq 0$, then for all $0<\lambda<1$,
 \begin{equation}\label{PZ}
   \P\big( Z>\lambda \Vert Z\Vert_1\big)\geq\Big( (1-\lambda)\frac{ \dis  \Vert Z\Vert_1}{ \Vert Z\Vert_2} \Big)^2.
 \end{equation}
 We apply   \eqref{PZ} to the random variable $Z=\vert Y_N\vert^2 $, with 
 \begin{equation*}
 Y_{N}=\frac{\sqrt{N}}{\sqrt{e_{L}}|\gamma|_{\Lambda}}L(v_{\gamma})= \frac{\sqrt{N}}{\sqrt{e_{L}}|\gamma|_{\Lambda}}\sum_{j\in \Lambda} \gamma_{j}X_{j}L(\phi_{j}),
 \end{equation*}
and $\lambda=1/2$. By \eqref{condi0}, we have   $c_{0}\leq \|Y_{N}\|_{2}\leq C_{0}$ uniformly in $N\geq1$. Next, recall the  Khinchin inequality (see {\it e.g.} \cite[Lemma 4.2]{BT2} for a proof) : there exists $C>0$ such that for all real $k\geq 2$ and $(a_{n})\in \ell^{2}(\N)$
\begin{equation*} 
\|\sum_{n\in \Lambda}X_{n}(\om)\,a_{n}\|_{L_{\P}^{k}}\leq C\sqrt{k}\Big(\sum _{n\in \Lambda}|a_{n}|^{2}\Big)^{\frac12}.
\end{equation*}
Therefore, there exists $C_1>0$ such that  $ \|Y_{N}\|_{4}\leq C_1$. As a result, there exist  $\eta>0$ and $\eps>0$ so that    for all $N\geq 1$, $\P(\vert Y_N\vert >\eta)>\eps$, which implies the result.
\end{proof}

\begin{proof}[Proof of \eqref{minor*}]
We assume that $|\gamma|_{\Lambda}=1$, and consider the set $A$ defined in \eqref{defA}. Then by \eqref{decomp} and the inequality $\P(B\cap A)\geq \P(B)-\P(A^{c})$ we get 
 \begin{eqnarray*}
{\bf P}_{\gamma,h}\Big[u\in{\bf S}_h:\; |L(u)|\geq t\Big] &\geq& \P\big[\, (|L(v(\om))|\geq t \|v(\om)\|_{L^{2}})\cap A\Big]\\
&\geq &  \P\big[\, |L(v(\om))|\geq 3t/2  \big]-\P(A^{c})\\
&\geq & C_{1}{\rm e}^{-c_{1}\frac{N}{e_L}t^2}-2\e^{-c_2 N},
 \end{eqnarray*}
 where in the last line we used Lemma \ref{minor} and Lemma \ref{LD1}. This yields the result if $t\leq \eps_{0}\frac{\sqrt{e_{L}}}{\sqrt{N}}$ with $\eps_{0}>0$ small enough.
 \end{proof}

We now prove   \eqref{Binf}. To begin with, we can state
\begin{lemm}\ph\label{lemM} 
We suppose that Assumption \ref{Assum2} is satisfied. Then there exist $C_1>0,  c_1>0$, $h_0>0$, $\epsilon_0>0$   such that
\begin{equation*}
\P\Big[\,\om\in\Omega:\vert L(v_\gamma(\omega))\vert \geq t\,\Big] \geq C_1{\rm e}^{-c_1\frac{N}{e_L \vert\gamma\vert_{\Lambda}^2}t^2},\quad \forall t\geq 0,\; \forall h\in]0, h_0].
\end{equation*}
\end{lemm}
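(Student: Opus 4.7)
The plan is to exploit the specific structure of complex Gaussians, which makes $L(v_\gamma)$ itself a complex Gaussian random variable whose distribution is explicit. Unlike Lemma \ref{minor}, where the Paley--Zygmund inequality only gave a useful lower bound in the small-$t$ regime, the Gaussian case will yield an exact Gaussian tail bound valid for every $t\geq 0$.

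First, I would observe that since $\{X_j\}_{j\in\Lambda}$ are i.i.d.\ standard complex Gaussians $\mathcal{N}_\C(0,1)$, the random variable
\[
L(v_\gamma(\omega)) = \sum_{j\in\Lambda}\gamma_j L(\varphi_j) X_j(\omega)
\]
is a (centred) complex Gaussian, as a finite $\C$-linear combination of independent complex Gaussians. Its variance is
\[
\sigma^2 := \sum_{j\in\Lambda}|\gamma_j|^2 |L(\varphi_j)|^2 .
\]
Next, invoking the squeezing condition \eqref{condi0}, we have $|\gamma_j|^2 \geq \frac{K_1}{N}|\gamma|_\Lambda^2$ for every $j\in\Lambda$, hence
\[
\sigma^2 \;\geq\; \frac{K_1\,|\gamma|_\Lambda^2}{N}\sum_{j\in\Lambda}|L(\varphi_j)|^2 \;=\; \frac{K_1\,|\gamma|_\Lambda^2\,e_L}{N}.
\]

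For the final step, I would use the fact that if $Z\sim\mathcal{N}_\C(0,\sigma^2)$, then $|Z|^2/\sigma^2$ is exponentially distributed with mean $1$, so
\[
\P\bigl[|Z|\geq t\bigr] \;=\; \e^{-t^2/\sigma^2}, \qquad \forall\,t\geq 0.
\]
Applying this to $Z=L(v_\gamma)$ and combining with the lower bound on $\sigma^2$, we obtain
\[
\P\bigl[\omega\in\Omega:\;|L(v_\gamma(\omega))|\geq t\bigr] \;=\; \e^{-t^2/\sigma^2} \;\geq\; \e^{-\frac{N}{K_1\,|\gamma|_\Lambda^2\,e_L}\,t^2},
\]
valid for all $t\geq 0$ and all $h\in(0,1]$. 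This yields the claim with $C_1=1$ and $c_1=1/K_1$ (no restriction on the size of $t$, in contrast to Lemma~\ref{minor}).

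There is essentially no obstacle here: the whole point of restricting to Gaussians in Assumption~\ref{Assum2} is precisely that the rotational invariance of the complex Gaussian law collapses the problem to an exact one-dimensional Gaussian computation. The only subtlety is bookkeeping, namely verifying that the $|\gamma_j|$ and $|L(\varphi_j)|$ (possibly complex) enter only through their moduli, which is immediate from the identity in law $\gamma_j L(\varphi_j) X_j \stackrel{\rm law}{=} |\gamma_j L(\varphi_j)|\,X_j$ for each $j$.
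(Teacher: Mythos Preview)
Your proof is correct and follows essentially the same route as the paper: both recognize that $L(v_\gamma)$ is Gaussian with variance $\sigma^2=\sum_{j\in\Lambda}|\gamma_j|^2|L(\varphi_j)|^2$, use condition~\eqref{condi0} to obtain $\sigma^2\geq K_1|\gamma|_\Lambda^2 e_L/N$, and then invoke the explicit Gaussian tail. The only cosmetic difference is that the paper phrases the last step via rotation invariance and a one-dimensional real Gaussian integral, whereas you use the exact identity $\P[|Z|\geq t]=\e^{-t^2/\sigma^2}$ for $Z\sim\mathcal N_\C(0,\sigma^2)$; your version is in fact slightly cleaner for the complex case.
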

\begin{proof}
 Denote by $\gamma\otimes L(\phi)$ the vector $(\gamma\otimes L(\phi))_{j}=\gamma_{j}L(\phi_{j})$. Observe that, thanks to \eqref{condi0},
\begin{equation*}
K_{1}\frac{\vert\gamma\vert_{\Lambda}^2 e_{L}}{N}\leq |\gamma\otimes L(\phi)|^{2}=\sum_{j\in \Lambda_{h}}\gamma^{2}_{j}|L(\phi_{j})|^{2}\leq K_{0}\frac{\vert\gamma\vert_{\Lambda}^2 e_{L}}{N}.
\end{equation*}
Then, using the rotation invariance of the  Gaussian law and the previous line, we get 
\begin{eqnarray*}
\P\Big[\om\in\Omega:\,| L(v_\gamma(\omega))\vert \geq t\,\Big] &=& \P\Big[\,\Big| \< \frac{\gamma\otimes L(\phi)}{|\gamma\otimes L(\phi)|},X\>\Big|\geq \frac{t}{|\gamma\otimes L(\phi)|} \,  \Big] \\
&=& \frac{1}{\sqrt{2\pi}}
\int_{ |s|\geq \frac{t}{     |\gamma\otimes L(\phi)|}     }{\rm e}^{-s^2/2}\text{d}s \\
&\geq & C{\rm e}^{-\frac{cN}{e_L\vert\gamma\vert_{\Lambda}^2}t^2}.
\end{eqnarray*}
\end{proof}

The estimate \eqref{Binf} then follows from Lemma \ref{lemM} and with the same argument as for Lemma~\ref{minor}.

\subsubsection{\bf Concentration phenomenon} We now state a concentration property for~${\bf P}_{\gamma,h}$, inherited from Assumption \ref{Assum1} and condition \eqref{condi0}. See \cite{led} for more details on this topic.
\begin{prop}\ph\label{SC}
Suppose that the i.i.d. random variables $X_{j}$ satisfy Assumption \ref{Assum1} and suppose that condition \eqref{condi0} is satisfied. Then there exist constants $K>0$, $\kappa >0$ (depending only on~$C^{\star}$) such that for every Lipschitz function $F: {\bf S}_h \longrightarrow \R$   satisfying
$$
\vert F(u) -F(v)\vert \leq \|F\|_{Lip} \|u-v\|_{L^{2}(\R^{d})},\quad  \forall u, v\in{\bf S}_h,
$$  
 we have
\beq\label{supconcent}
{\bf P}_{\gamma,h}\Big[ \,u\in {\bf S}_h : \vert F-{\cal M}_F\vert > r\,\Big] \leq K{\rm e}^{-\frac{\kappa Nr^2}{\|F\|^{2}_{Lip}}},\quad  \forall r>0,\; h\in]0, 1],
\eeq
where ${\cal M}_F$ is a median for $F$.
\end{prop}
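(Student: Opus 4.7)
The strategy is to transfer the dimension-free concentration inequality available on the product space $(\Omega,\P)$ through the map $\omega \mapsto v_\gamma(\omega)/\|v_\gamma(\omega)\|_{L^2}$, which pushes $\P$ forward to ${\bf P}_{\gamma,h}$. Given a Lipschitz function $F:{\bf S}_h\to\R$, I would set
\[
\widetilde F(\omega):=F\!\left(\frac{v_\gamma(\omega)}{\|v_\gamma(\omega)\|_{L^2}}\right),
\]
regarded, via the independent real variables $(X_j)_{j\in\Lambda}$, as a function on $\R^{2N}$.

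The key estimate is the Lipschitz constant of $\widetilde F$ on a good set. The linear map $x\mapsto v_\gamma(x)=\sum_{j\in\Lambda}\gamma_j x_j \varphi_j$ has operator norm $\max_{j\in\Lambda}|\gamma_j|\leq \sqrt{K_0/N}\,|\gamma|_\Lambda$ thanks to the squeezing condition \eqref{condi0}, while the nonlinear normalization $v\mapsto v/\|v\|_{L^2}$ is $(2/c)$-Lipschitz on $\{v\in{\cal E}_h:\|v\|_{L^2}\geq c\}$. Consequently, on the set $A=\{\omega:\|v_\gamma(\omega)\|_{L^2}\geq |\gamma|_\Lambda/\sqrt{2}\}$, the composition $\widetilde F$ is Lipschitz with constant at most $C\,\|F\|_{Lip}/\sqrt{N}$. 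I would then extend $\widetilde F|_A$ to a globally Lipschitz function $\widetilde F_{\!*}:\R^{2N}\to\R$ with the same constant by infimal convolution,
\[
\widetilde F_{\!*}(x)\;=\;\inf_{y\in A}\Bigl(\widetilde F(y)+C\,\|F\|_{Lip}\,N^{-1/2}\,\|x-y\|_{\ell^2}\Bigr).
\]

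Applying the concentration of measure property of Assumption~\ref{Assum1} to $\widetilde F_{\!*}$, which in the log-Sobolev setting (to which the dependence on $C^\star$ refers) holds for arbitrary Lipschitz functions, yields
\[
\P\bigl[\,|\widetilde F_{\!*}-\mathcal{M}_{\widetilde F_{\!*}}|>r\,\bigr]\;\leq\; K\,\exp\!\Bigl(-\frac{\kappa N r^{2}}{\|F\|_{Lip}^{2}}\Bigr).
\]
Combined with Lemma~\ref{LD1}, which gives $\P(A^c)\leq 2\,e^{-c_0 N}$, and the identity $\widetilde F_{\!*}\equiv \widetilde F$ on $A$, this tail bound transfers to the law of $F$ under ${\bf P}_{\gamma,h}$. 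The median $\mathcal{M}_{\widetilde F_{\!*}}$ may then be replaced by a median $\mathcal{M}_F$ of $F$ under ${\bf P}_{\gamma,h}$ since the two distributions agree outside the exponentially negligible set $A^c$, the discrepancy of the two medians being controlled at the scale $\|F\|_{Lip}\,N^{-1/2}$, which is absorbed into the constants $K,\kappa$.

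The main obstacle is the convexity clause in Definition~\ref{defiCM}: the Lipschitz extension $\widetilde F_{\!*}$ has no reason to be convex. As recorded after Assumption~\ref{Assum1}, measures satisfying a log-Sobolev inequality enjoy \eqref{Blip} for \emph{all} Lipschitz functions, and the appearance of the constant $C^\star$ in the statement tacitly places us in that case; if one wished to cover instead the bounded-support or exponential-weight situations, the argument would require either convex truncations or the stronger exponential Lipschitz bound \eqref{ExpLip}, and the bookkeeping of medians versus means becomes more delicate. Everything else is standard once the $1/\sqrt{N}$ gain coming from the squeezing condition is correctly put in place.
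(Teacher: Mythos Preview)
Your approach is correct and is a genuinely different route from the paper's. The paper (Appendix~\ref{AppD}) follows Ledoux: after rescaling the product measure by $x_j\mapsto\sqrt N\,\gamma_j x_j$ so that the squeezing condition \eqref{condi0} makes this map a quasi-isometry, it introduces the explicit function $G(x)=|x|_0\bigl(F(x/|x|_0)-F(x_0)\bigr)$, which is globally $2(\pi+1)$-Lipschitz on $\R^N$, and then runs a two-point argument comparing $G(x/\mathcal M_\gamma)$ with $G(x/|x|_0)$ and $G(y/\mathcal M_\gamma)$, the factor $N$ entering through the estimate $\mathcal M_\gamma\approx\sqrt N$ for the median of $|x|_0$. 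Your argument instead localizes to the set $A$ where $\|v_\gamma\|$ is bounded below, obtains the $N^{-1/2}$ directly in the Lipschitz constant of $\widetilde F|_A$, and then extends by infimal convolution; the role of the median estimate is played by Lemma~\ref{LD1}. Your path is perhaps more transparent about where the gain $N$ comes from, while the paper's avoids the extension step by exhibiting a global Lipschitz function from the start.

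You are right that both arguments run into the same issue with the convexity clause of Definition~\ref{defiCM}: neither the McShane extension $\widetilde F_{\!*}$ nor the paper's function $G$ is convex in general, so in either proof one is tacitly appealing to \eqref{Blip} for arbitrary Lipschitz functions, as is the case under a log-Sobolev inequality with constant $C^\star$. Your diagnosis of this point is accurate and matches the paper's implicit hypothesis. The passage from $\mathcal M_{\widetilde F_{\!*}}$ to $\mathcal M_F$ is a little terse but standard: the concentration of $\widetilde F_{\!*}$ forces its median to lie within $O(\|F\|_{Lip}N^{-1/2})$ of any value whose level sets have mass close to $1/2$, and the total-variation distance $\P(A^c)\le 2e^{-c_0N}$ transfers this to $\mathcal M_F$.
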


Recall that a median ${\cal M}_F$ for $F$ is defined by 
\begin{equation*}
{\bf P}_{\gamma,h}\big[ \,u\in {\bf S}_h : F \geq {\cal M}_F  \,\big]\geq \frac12,\quad {\bf P}_{\gamma,h}\big[ \,u\in {\bf S}_h : F \leq {\cal M}_F  \,\big]\geq \frac12.
\end{equation*}

In Proposition \ref{SC},  the distance in $L^{2}$ can be replaced with the geodesic distance   $d_S$ on ${\bf S}_h$, since we can check that 
$$\|u-v\|_{L^{2}(\R^{d})}\leq d_{S}(u,v)=2\arcsin\big(\frac{\|u-v\|_{L^{2}(\R^{d})} }2\big)\leq \frac{\pi}2\|u-v\|_{L^{2}(\R^{d})}.$$

When  ${\bf P}_{\gamma,h}$  is the uniform probability on ${\bf S}_h$, Proposition \ref{SC}  is proved in \cite[Proposition 2.10]{led}, and the proof can be adapted in the general case (see Appendix \ref{AppD}). The factor $N$ in the  exponential  of r.h.s of (\ref{supconcent}) will be crucial in our application.


\section{Some spectral estimates for the harmonic oscillator}\label{Sect3}
Our goal  here is to  apply the  general setting  of Section \ref{Sect2} to  the harmonic oscillator in $\R^d$. This way
 we shall get   probabilistic estimates  analogous to results proved in~\cite{bule}  for the
 Laplace operator in a compact Riemannian manifold. 
 
 In the following, we consider the Hamiltonian $ H = -\triangle + V(x)$
  with $V(x) = \vert x\vert^2$, $x\in\R^d$  for  $d\geq 2$.  For this model,  all  the  necessary spectral  estimates are already known. More general confining potentials $V$ shall be considered in the forthcoming paper \cite{PRT3}.\ligne

  A first and basic  ingredient in  probabilistic approaches of weighted Sobolev spaces is a   good knowledge 
concerning the asymptotic behavior  of eigenvalues and eigenfunctions of  $H$. The eigenvalues of this operator are the $\big\{2(j_{1}+\dots+j_{d})+d,\; j\in \N^{d}\big\}$, and we can order them in a  non decreasing sequence $\{\lambda_j, \;j\in\N\}$, repeated according to their multiplicities. We denote by  $\{\phi_{j}, j\in\N\}$ an orthonormal basis in $L^2(\R^d)$ of eigenfunctions (the Hermite functions), so that $H \phi_{j} =\lambda_j \phi_{j}$.
The spectral function is then defined as 
$\di{\pi_{H}(\lambda;x,y) = \sum_{\lambda_j\leq \lambda}\phi_{j}(x)\overline{\phi_{j}(y)}}$ (recall that this definition does not depend on the choice of $\{\phi_{j}, j\in\N\}$). When the energy $\lambda$ is localized in $I\subseteq \R^{+}$ we denote
 by  $\Pi_H(I)$  the spectral projector of  $ H$ on  $I$.
 The range  ${\cal E}_H(I)$ of  $\Pi_H(I)$ is spanned  by  $\{\phi_{j}; \lambda_j\in I\}$ and $\Pi_H(I)$ 
has an  integral kernel given by 
 $$
 \pi_H(I;x,y) = \sum_{[j\,:\,\lambda_j\in I]}\phi_{j}(x)\overline{\phi_{j}(y)}.
 $$ 
We will also use the notation ${\cal E}_H(\lambda)= {\cal E}_H([0, \lambda])$, 
$N_H(\lambda)={\rm dim}[{\cal E}_H(\lambda)]$.

\subsection{Interpolation inequalities} We begin with some general interpolation results which will be needed in the sequel.  In $\R^d$, the spectral function   $\pi_{H}(\lambda; x,x)$ is fast decreasing for  $\vert x\vert\rightarrow +\infty$  so
  it is natural to work with weighted $L^{p}$ norms.   We denote by   $\<x\>^s=(1+\vert x\vert ^2)^{s/2}$ and  introduce the  following Lebesgue space with weight  
 $$
 L^{p,s}(\R^d) = \left\{u,\;{\rm Lebesgue \;\; measurable}\, : \int \vert u(x)\vert^p\<x\>^s\text{d}x < +\infty\right\} = L^p(\R^d, \<x\>^s\text{d}x),
 $$
 endowed with its natural norm, which we denote by $\|u\|_{p,s}$. For $p=\infty$, we set $\di{\Vert u\Vert_{\infty, s} = \sup_{x\in\R^d}\<x\>^s\vert u(x)\vert}$.

The following interpolation inequalities hold true.  Let $1\leq p_1\leq p \leq p_{0}\leq +\infty$ and $\kappa\in ]0, 1[$ such that
 $\frac{1}{p} =\frac{\kappa}{p_1} + \frac{1-\kappa}{p_0} $. Then for $p_{0}<+\infty$ we have
\begin{equation}\label{interpol1}
 \Vert u\Vert_{L^{p,s}(\R^d)} \leq  (\Vert u\Vert_{L^{p_0,s_0}(\R^d)})^{1-\kappa}(\Vert u\Vert_{L^{p_1,s_1}(\R^d)})^{\kappa}, \;\;{\rm with}\;\;\; s = \frac{p_1-p}{p_1-p_0}s_0 +   \frac{p_0-p}{p_0-p_1}s_1.
\end{equation}  
In the case  $p_0 = +\infty$, we have
\begin{equation}\label{interpol2}
  \Vert u\Vert_{L^{p,s}(\R^d)} \leq  (\sup_{\R^d}\<x\>^{s_0}\vert u(x)\vert)^{1-p_1/p}
  (\Vert u\Vert_{L^{p_1,s_1}(\R^d)})^{p_1/p}, \;\;{\rm with}\;\; \;s = (p-p_1)s_0 + s_1.
\end{equation}

\subsection{Rough estimates of the harmonic oscillator} We recall here some more or less standard properties stated in \cite{ka1}. To begin with, we state a  "soft"  Sobolev inequality.
\begin{lemm} For all $u\in {\cal E}_H(I)$
\beq\label{sobg}
 \vert u(x)\vert \leq (\pi_H(I; x,x))^{1/2}\Vert u\Vert_{L^2(\R^d)}.
\eeq
\end{lemm}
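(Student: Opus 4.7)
The plan is to use the spectral expansion of $u$ in the eigenbasis together with Cauchy--Schwarz.

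First I would write, for $u\in\mathcal{E}_H(I)$, the decomposition
\[
u(x)=\sum_{j:\,\lambda_j\in I}c_j\,\phi_j(x),\qquad c_j=\langle u,\phi_j\rangle_{L^2(\R^d)},
\]
which is a finite sum since $I$ is bounded (or at worst an $L^2$-convergent one); the key point is that only the indices $j$ with $\lambda_j\in I$ contribute, because $u$ lies in the range of $\Pi_H(I)$. By Parseval's identity applied to this orthonormal family, $\|u\|_{L^2(\R^d)}^2=\sum_{j:\,\lambda_j\in I}|c_j|^2$.

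Next I would apply the Cauchy--Schwarz inequality pointwise in $x$:
\[
|u(x)|=\Bigl|\sum_{j:\,\lambda_j\in I}c_j\,\phi_j(x)\Bigr|\le\Bigl(\sum_{j:\,\lambda_j\in I}|c_j|^2\Bigr)^{1/2}\Bigl(\sum_{j:\,\lambda_j\in I}|\phi_j(x)|^2\Bigr)^{1/2}.
\]
The first factor equals $\|u\|_{L^2(\R^d)}$ by Parseval, and the second factor is exactly $(\pi_H(I;x,x))^{1/2}$ by the definition of the spectral kernel $\pi_H(I;x,y)=\sum_{j:\,\lambda_j\in I}\phi_j(x)\overline{\phi_j(y)}$ evaluated on the diagonal. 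Combining these two identifications yields the claimed bound.

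There is essentially no obstacle: the inequality is a direct consequence of Cauchy--Schwarz plus the definitions, and it requires no analytic estimates on Hermite functions. One small verification worth making explicit is that the spectral function $\pi_H(I;x,x)$ defined via the fixed basis $\{\phi_j\}$ agrees with the diagonal of the integral kernel of $\Pi_H(I)$ and is therefore basis-independent, so that the statement is intrinsic; this follows because the projector $\Pi_H(I)$ and the orthonormal frame of its range are uniquely determined up to unitary change inside each eigenspace, and $\sum_j|\phi_j(x)|^2$ is invariant under such changes.
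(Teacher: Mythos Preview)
Your proof is correct and is essentially the same Cauchy--Schwarz argument as the paper's, just phrased in the eigenbasis rather than through the integral kernel: the paper writes $u(x)=\int\pi_H(I;x,y)u(y)\,dy$, applies Cauchy--Schwarz in $L^2_y$, and then uses the projector identity $\Pi_H(I)^2=\Pi_H(I)$ to get $\int|\pi_H(I;x,y)|^2\,dy=\pi_H(I;x,x)$. Your version is marginally more direct since it bypasses that last projector step.
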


\begin{proof}
We have 
$$
u(x) = \Pi u(x) = \int_{\R^d}\pi_H(I; x,y)u(y)\text{d}y.
$$
Using  the Cauchy-Schwarz inequality 
\beq\label{cs}
\vert u(x)\vert \leq \left(\int_{\R^d}\vert\pi_H(I; x,y)\vert^2\text{d}y\right)^{1/2}\Vert u\Vert_{L^2(\R^d)}.
\eeq
Now we use that  $\Pi_H(I)$ is an orthonormal projector.
\beq\label{proj}
\pi_H(I; x,y) = \int_{\R^d}\pi_H(I; x,z)\pi_H(I; z,y)\text{d}z \;\;{\rm and}\;\;  \pi_H(I; x,y) = \overline{\pi_H(I; y,x)}.
\eeq
Finally, from  (\ref{cs})  and  (\ref{proj})  with  $y=x$  we get  (\ref{sobg}).
\end{proof}

 The next result gives a bound on $\pi_{H}$.

\begin{lemm} The following bound holds true
\beq\label{karest}
\pi_H(\lambda; x,x) \leq C\lambda^{d/2}\exp\left(-c\frac{\vert x\vert^2}{\lambda}\right),\quad
\forall x\in\R^d,\lambda \geq 1.
\eeq
\end{lemm}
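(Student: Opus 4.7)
The plan is to dominate the spectral function by the diagonal of the heat kernel of $H$, and then use the explicit Mehler formula. First, since for fixed $t>0$ the map $\mu\mapsto e^{-t\mu}$ is positive and decreasing, one has
\begin{equation*}
e^{-t\lambda}\,\pi_{H}(\lambda;x,x)=e^{-t\lambda}\sum_{\lambda_{j}\leq\lambda}|\phi_{j}(x)|^{2}\leq\sum_{j\geq 1}e^{-t\lambda_{j}}|\phi_{j}(x)|^{2}=K_{t}(x,x),
\end{equation*}
where $K_{t}$ denotes the integral kernel of the semigroup $e^{-tH}$. This yields $\pi_{H}(\lambda;x,x)\leq e^{t\lambda}K_{t}(x,x)$ for every $t>0$, reducing the problem to a pointwise bound on the heat kernel on the diagonal.

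Next, I would invoke Mehler's formula for the harmonic oscillator, which gives the closed expression
\begin{equation*}
K_{t}(x,y)=\frac{1}{(2\pi\sinh(2t))^{d/2}}\exp\!\left(-\tfrac{1}{2}\coth(2t)(|x|^{2}+|y|^{2})+\frac{x\cdot y}{\sinh(2t)}\right).
\end{equation*}
Setting $y=x$ and using the elementary identity $\coth(2t)-\frac{1}{\sinh(2t)}=\tanh(t)$, this simplifies to
\begin{equation*}
K_{t}(x,x)=\frac{1}{(2\pi\sinh(2t))^{d/2}}\exp\bigl(-|x|^{2}\tanh(t)\bigr).
\end{equation*}

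Finally, I would optimize the free parameter $t>0$. The target bound $C\lambda^{d/2}\exp(-c|x|^{2}/\lambda)$ suggests the semiclassical choice $t=1/\lambda$, which is consistent with the time at which the heat flow resolves the energy scale $\lambda$. For $\lambda\geq 1$, this gives $e^{t\lambda}=e$, while $t\in (0,1]$ ensures $\sinh(2t)\geq c_{1}/\lambda$ and $\tanh(t)\geq c_{2}/\lambda$ for absolute constants $c_{1},c_{2}>0$. Substituting these bounds in the previous display yields the announced estimate. I do not anticipate a genuine obstacle: once the semigroup-domination step is noted, Mehler's formula does all the work and the remainder is a one-parameter optimization; the only care needed is to verify that the implicit constants are uniform in $\lambda\geq 1$, which follows since $t=1/\lambda$ stays in the compact interval $(0,1]$.
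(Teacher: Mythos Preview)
Your proof is correct and follows essentially the same approach as the paper: bound the spectral function by the diagonal heat kernel via $\pi_H(\lambda;x,x)\leq e^{t\lambda}K_t(x,x)$, apply Mehler's formula to get $K_t(x,x)=(2\pi\sinh 2t)^{-d/2}\exp(-|x|^2\tanh t)$, and then set $t=1/\lambda$. The only cosmetic difference is that the paper writes the Mehler kernel in the $|x+y|^2,|x-y|^2$ form, while you use the $\coth(2t),\,1/\sinh(2t)$ form; on the diagonal these are identical.
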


\begin{proof} 
Let  $K(t; x,y)$  be the heat kernel   of 
${\rm e}^{-t H}$.  It is given by the  following Mehler formula \footnote{The Mehler formula can also be obtained from the Fourier transform computation of the Weyl symbol of ${\rm e}^{-t H}$ (see \cite[Exercise IV-2]{ro}).}
\beq\label{mehl}
K(t; x,y)  = (2\pi\sinh 2t)^{-d/2}
\exp\left(-\frac{\tanh t}{4}\vert x+y\vert^2- \frac{ \vert x-y\vert^2}{4\tanh t}\right).
\eeq
So we have 
\beq\label{heat}
K(t; x,x) = \int_{\R}{\rm e}^{-t\mu}\,\text{d}\pi_{H}(\mu;x,x) = (2\pi\sinh 2t)^{-d/2}\exp(-\vert x\vert^2\tanh t).
\eeq
We set $t=\lambda^{-1}$, integrate in   $\mu$ on  $[0,\lambda]$ and  get 
$$
\pi_H(\lambda; x,x)\leq \e K(\lambda^{-1}; x,x).
$$
Assuming  $\lambda \geq \lambda_0$, $\lambda_0$ large enough, we  easily see that  (\ref{karest})
is a consequence of  (\ref{heat}). 
\end{proof}

Let  $u\in{\cal E}_H(\lambda)$.  From  (\ref{sobg}) and  (\ref{karest})  we get 
\begin{equation*}
\vert u(x)\vert \leq C\lambda^{d/4}\exp\left(-c\frac{\vert x\vert^2}{2\lambda}\right)\Vert u\Vert_{L^2(\R^d)},
\end{equation*}
where $c,C>0$ do not depend on  $x\in\R^d$ nor  $\lambda\geq 1$.

\begin{rema}
From \eqref{karest}, we can deduce that  for every $\theta>0$ there exists $C_\theta>0$ such that 
\begin{equation*}
\pi_H(\lambda; x,x) \leq C_\theta\lambda^{(d+\theta)/2}\<x\>^{-\theta} ,
\end{equation*}
     which  by \eqref{sobg} implies with the semiclassical parameter $h=\lambda^{-1}$
    \begin{equation*}
      \<x\>^{\theta/2}h^{(d+\theta)/4} \vert u(x)\vert \leq C_\theta\Vert u\Vert_{L^2(\R^d)}, \quad
\forall u\in {\cal E}_H(h^{-1}).
    \end{equation*} 
    We can easily see  that this uniform estimate  is true for  $u\in{\cal E}(I_h)$
where $I_h=[\frac{a}{h},  \frac{b}{h}]$ with $a < b$. For smaller  energy intervals we  can get much better 
estimates, as we will see in Lemma \ref{lemme5}.
 \end{rema}

\begin{rema} Let us compare the previous results with the case of a compact Riemannian  manifold~$M$,  and when $ H=-\triangle$ is the Laplace operator. We have the  uniform H\"ormander estimate~\cite{ho1}:
\beq\label{hor}
\pi_H(\lambda;x,x) =  c_d(x)\lambda^{d/2} + \mathcal{O}(\lambda^{(d-1)/2}),
\eeq
 where   $0<c_d(x)$ is a continuous function on $M$.  Thus from   (\ref{cs}) and~(\ref{hor})  we get for some constant~$C_S>0$, 
$$
\Vert u\Vert_{L^\infty(M)} \leq C_S\lambda^{d/4}\Vert u\Vert_{L^2(M)},\quad  \forall u\in{\cal E}(\lambda).
$$
\end{rema}

Let us emphasis here that  it results form the uniform Weyl law \eqref{hor}  that $\pi_H(\lambda;x,x)$
  has an upper bound and a lower bound of order $\lambda^{d/2}$. For confining potentials like $V$
  the behavior of $\pi_H(\lambda;x,x)$ is much more complicated because of the turning points: $\big\{\,\vert x\vert^2 = \lambda\,\big\}$.
 This behavior was analyzed in~\cite{ka1}.

\subsection{More refined  estimates for the spectral function } From  the Weyl law for the harmonic oscillator we have 
\begin{equation*} 
 N_H(\lambda) = c_d\lambda^d + \mathcal{O}(\lambda^{d-1}), \quad c_d>0,
 \end{equation*}
we deduce that   if (\ref{spect1}) is satisfied with $\delta =1$ then we have 
\begin{equation}\label{ordre}
\alpha h^{-d}(b_h - a_h) \leq N_h \leq \beta h^{-d}(b_h - a_h),\;\; \alpha>0, \beta>0.
\end{equation}
~

The main result of this section is the following lemma. It is a consequence of the work of Thangavelu \cite[Lemma 3.2.2, p. 70]{Thanga} on Hermite functions. This was proved later Karadzhov~\cite{ka1} with a different method. It could also be deduced from much more general results by Koch, Tataru and {Zworski~\cite{KOTA, KTZ}} and it is also related, after rescaling,  with results obtained by Ivrii \cite[Theorem 4.5.4]{iv}.
\begin{lemm}\ph \label{lemme5}
Let $d\geq 2$ and assume that $|\mu|\leq c_{0}$, $1\leq p\leq +\infty$ and $\theta \geq 0$. Then  there exists $C>0$ so that for all~$\lambda\geq 1$
\begin{equation*}
\|\pi_{H}(\lambda+\mu;x,x)-\pi_{H}(\lambda;x,x)\|_{L^{p,(p-1)\theta}(\R^d)} \leq C \lambda ^{  \a  },
\end{equation*}
with  $\a =\frac{d}2(1+\frac1p)-1 +\frac{\theta}{2}(1-\frac{1}{p})   $.
\end{lemm}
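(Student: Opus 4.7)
The plan is to bound the two endpoints $p=1$ and $p=\infty$ separately and then interpolate using \eqref{interpol2}. Writing $e_I(x) := \pi_H(\lambda+\mu;x,x) - \pi_H(\lambda;x,x) = \sum_{\lambda_j \in I} |\phi_j(x)|^2 \geq 0$ with $I=(\lambda, \lambda+\mu]$, the $L^{p,(p-1)\theta}$ norms are controlled by the two endpoint bounds
\begin{equation*}
\|e_I\|_{L^{1}(\R^d)} \leq C\lambda^{d-1}, \qquad \sup_{x\in\R^d}\langle x\rangle^{\theta} e_I(x) \leq C\lambda^{\frac{d}{2}-1+\frac{\theta}{2}}.
\end{equation*}

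For $p=1$, the bound is immediate: since $e_I\geq 0$, one has $\|e_I\|_{L^1} = \int e_I(x)\,dx = \operatorname{Tr}\Pi_H(I) = \#\{j:\lambda_j\in I\}$. The harmonic oscillator has eigenvalues $2|k|+d$ with multiplicities $\sim |k|^{d-1}$, so since $|\mu|\leq c_0$ only a bounded number of distinct eigenvalues fall in $I$, each contributing $O(\lambda^{d-1})$ via the Weyl law. This gives the $p=1$ bound, which matches $\alpha = d-1$.

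For $p=\infty$, the bound is the heart of the matter; it is the content of Thangavelu's lemma (or Karadzhov's refinement), and it is where the non-compact geometry with turning points forces us to leave the regime of the naive estimate \eqref{karest}. The approach I would follow is to split $\R^d$ into three regions governed by the semiclassical parameter $h=\lambda^{-1}$: the allowed region $|x|^2 \leq (1-\eps)\lambda$, a turning-point layer $||x|^2-\lambda|\leq C\lambda^{1/3}$, and the forbidden region $|x|^2\geq (1+\eps)\lambda$. In the allowed region, one uses a parametrix for the wave group $e^{-itH}$ (Mehler-type formula) together with a fine Tauberian argument on energy-localised oscillatory integrals: the gain over \eqref{karest} by a factor $\lambda^{-1}$ comes from integrating over an energy window of size $|\mu|\sim 1$ rather than over $[0,\lambda]$. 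In the forbidden region, the spectral function decays rapidly (Agmon-type weighted estimates), which accounts for the factor $\langle x\rangle^{-\theta}$ and in fact gives decay of any order. In the turning-point layer, Airy-type asymptotics give the universal bound $\lambda^{d/2-1}$ with no room left. The weight $\langle x\rangle^{\theta}$ is absorbed using $\langle x\rangle^{\theta}\leq C\lambda^{\theta/2}$ on the support where $\pi_H$ is not exponentially small, which yields exactly $\lambda^{d/2-1+\theta/2}$. Rather than reproving this, I would cite Thangavelu \cite{Thanga}, Karadzhov \cite{ka1}, and Koch-Tataru-Zworski \cite{KOTA,KTZ}, each of which gives precisely the endpoint bound needed.

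With the two endpoints in hand, I apply the interpolation inequality \eqref{interpol2} with $p_0=\infty$, $p_1=1$, $s_0=\theta$, $s_1=0$, so that $s=(p-p_1)s_0+s_1=(p-1)\theta$ as required. This yields
\begin{equation*}
\|e_I\|_{L^{p,(p-1)\theta}} \leq \bigl(\sup_x\langle x\rangle^{\theta}e_I(x)\bigr)^{1-1/p}\,\|e_I\|_{L^1}^{1/p} \leq C\lambda^{\left(\frac{d}{2}-1+\frac{\theta}{2}\right)\left(1-\frac{1}{p}\right)+\frac{d-1}{p}}.
\end{equation*}
A direct computation reorganises the exponent as $\frac{d}{2}(1+\frac{1}{p})-1+\frac{\theta}{2}(1-\frac{1}{p})=\alpha$, completing the argument. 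The main obstacle is the $p=\infty$ endpoint: the $p=1$ bound and the interpolation are routine, whereas the pointwise bound with the sharp gain of $\lambda^{-1}$ over \eqref{karest} requires either the external results cited above or a careful semiclassical analysis that must handle the three regions (allowed, turning, forbidden) with compatible estimates.
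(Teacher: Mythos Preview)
Your proposal is correct and essentially matches the paper's proof: both take Karadzhov's weighted pointwise bound \eqref{kara1}--\eqref{kara4} as the key external input for the $L^\infty$ endpoint, and the paper then integrates \eqref{kara4} directly over the ball $\{|x|\leq c_0\lambda^{1/2}\}$ (using \eqref{kara2} outside), which is equivalent to your interpolation against the unweighted $L^1$ endpoint. The only cosmetic difference is that you invoke the exact trace identity at $p=1$, whereas the paper absorbs the $p=1$ information into the direct integration of the pointwise bound.
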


\begin{proof}
Recall the following estimates proved in  \cite[Theorem 4]{ka1}:   For $d\geq 2$ and $x\in \R$
\begin{equation}\label{kara1}
\vert\pi_H(\lambda +\mu;x,x) - \pi_H(\lambda ;x,x)\vert \leq C\lambda^{d/2 -1},\quad
\lambda \geq 1, \;\vert\mu\vert \leq 1.
\end{equation}
and for every $\varepsilon_0>0$ and every $N\geq 1$ there exists  $C_{\varepsilon_0, N}$ such that
\beq\label{kara2}
\pi_H(\lambda ;x,x) \leq C_{\eps_{0}, N}\vert x\vert^{-N},\; \;{\rm for}  \; \vert x\vert^2 \geq (1+\varepsilon_0)\lambda.
\eeq
 From  (\ref{kara1}) we get that for every $C_0>0$ there exists  $C>0$  such that
  \beq\label{kara3}
\vert\pi_H(\lambda +\mu;x,x) - \pi_H(\lambda ;x,x)\vert \leq C(1+\vert\mu\vert)\lambda^{d/2 -1},\quad
\lambda \geq 1, \vert\mu\vert \leq C_0\lambda.
\eeq
 Then from  \eqref{kara3} and \eqref{kara2} we get that for every $\theta \geq 0$ there exists $C$
such that 
  \beq\label{kara4}
\vert\pi_H(\lambda +\mu;x,x) - \pi_H(\lambda ;x,x)\vert \leq C(1+\vert\mu\vert)\lambda^{d/2 -1+\theta/2}
\<x\>^{-\theta},\quad
\lambda \geq 1, \vert\mu\vert \leq C_0\lambda.
\eeq
Therefore, by \eqref{kara2}, to get the result of Lemma \ref{lemme5}, it is enough to integrate the previous inequality on $\vert x\vert\leq c_0 \lambda^{1/2}$.
\end{proof}
From \eqref{kara4},  we easily get  an accurate estimate for the  spectral function 
$$
e_{x} = \pi_H(\frac{b_h}{h};x,x) -  \pi_H(\frac{a_h}{h};x,x).
$$
\begin{lemm} Assume that (\ref{spect1}) is satisfied with $0<\delta \leq 1$. 
For any  $\theta\geq 0$ there exists $C>0$ such that 
\beq\label{dsp}
\<x\>^\theta e_{x} \leq CN_hh^{(d-\theta)/2}.
\eeq
\end{lemm}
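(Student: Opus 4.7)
The plan is to deduce \eqref{dsp} as a direct consequence of the refined spectral estimate \eqref{kara4}, which was just established in the proof of Lemma \ref{lemme5}, combined with the Weyl-type eigenvalue count \eqref{ordre}. The argument is short: one applies \eqref{kara4} a single time, rewrites the resulting factors using the hypotheses, and invokes \eqref{ordre} to convert powers of $h$ into $N_h$.

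Concretely, the first step is to apply \eqref{kara4} with $\lambda = a_h/h$ and $\mu = (b_h - a_h)/h$. The admissibility condition $|\mu| \leq C_0 \lambda$ holds for small $h$, since $\mu/\lambda = (b_h - a_h)/a_h$ converges to $(b-a)/a$. This immediately yields
$$\langle x \rangle^{\theta}\, e_x = \langle x \rangle^\theta \bigl[\pi_H(b_h/h;x,x) - \pi_H(a_h/h;x,x)\bigr] \leq C\Bigl(1 + \frac{b_h - a_h}{h}\Bigr)\Bigl(\frac{a_h}{h}\Bigr)^{d/2 - 1 + \theta/2}.$$

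The key next step, and the only one that needs any care, is to absorb the additive "$1$" into $(b_h - a_h)/h$. Under \eqref{spect1} with $0<\delta \leq 1$, one has $(b_h - a_h)/h \geq D h^{\delta - 1}$: when $\delta < 1$ this blows up as $h \to 0$, while when $\delta = 1$ the constraint $D \geq 2$ forces $(b_h - a_h)/h \geq 2$. In either case $(b_h - a_h)/h \geq 1$ for $h \leq h_0$ small, so that $1 + (b_h - a_h)/h \leq 2(b_h - a_h)/h$. Combined with $a_h \leq 2a$ and the previous display, this gives
$$\langle x \rangle^{\theta}\, e_x \leq C\,(b_h - a_h)\, h^{-d/2 - \theta/2}.$$

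The final step is to invoke the lower bound $b_h - a_h \leq \alpha^{-1} N_h h^d$ from \eqref{ordre}, which converts the above into $\langle x \rangle^{\theta}\, e_x \leq C' N_h h^{(d-\theta)/2}$, as required. The place where the argument could go wrong is precisely the absorption step: if one were to use the cruder estimate $1 + (b_h - a_h)/h \leq C/h$, the resulting bound would lose a factor of $(b_h - a_h)$ and would fail as soon as the spectral window is asymptotically smaller than a constant, i.e.\ for any $\delta > 0$. This is exactly why the assumptions $\delta \leq 1$ (with the additional constraint $D \geq 2$ at $\delta = 1$) are both used and sufficient.
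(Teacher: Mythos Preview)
Your proof is correct and is exactly the argument the paper has in mind: the paper merely says ``From \eqref{kara4}, we easily get an accurate estimate,'' and you have written out precisely that derivation, applying \eqref{kara4} with $\lambda=a_h/h$, $\mu=(b_h-a_h)/h$, then using \eqref{spect1} to absorb the additive $1$ and \eqref{ordre} to trade $(b_h-a_h)h^{-d}$ for $N_h$.
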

Using (\ref{sobg}) and interpolation inequalities we get  Sobolev type inequalities for $u\in{\cal E}_h$,
$\theta \geq 0$, $p\geq 2$.
\begin{equation}\label{detsob}
\Vert u\Vert_{L^{\infty,\theta/2}(\R^d)} \leq C  \left(N_hh^{(d-\theta)/2}\right)^{1/2}\Vert u\Vert_{L^{2}(\R^{d})},
\end{equation}
which in turn implies, by \eqref{interpol1}
\begin{equation}\label{detsob1r} 
 \Vert u\Vert_{L^{p,\theta(p/2-1)}(\R^d)} \leq C \left(N_hh^{(d-\theta)/2}\right)^{\frac{1}{2}-\frac{1}{p}}\Vert u\Vert_{L^{2}(\R^{d})}.
\end{equation}
By \eqref{ordre},  the previous inequality can be written as
\begin{equation*} 
 \Vert u\Vert_{L^{p,\theta(p/2-1)}(\R^d)} \leq C(b_h-a_h)^{\frac{1}{2}-\frac{1}{p}}
 h^{-(\frac{d+\theta}{2})(\frac{1}{2}-\frac{1}{p})}\Vert u\Vert_{L^{2}(\R^{d})}, \quad \forall p\in [2, +\infty], \;\forall\theta\in[0, d].
\end{equation*}

\begin{rema}
For similar bounds for eigenfunctions or quasimodes, we refer to  \cite{KTZ}.
\end{rema}
\section{Probabilistic weighted Sobolev estimates}\label{Sect4}
We apply here the general  probabilistic setting of Section \ref{Sect2} when  $K=H$ is the harmonic oscillator,
${\cal K} =L^2(\R^d)$ and $\{\varphi_j,\,j\in\N\}$ an orthonormal basis of Hermite  functions. Recall that  ${\bf S}_h $  is the unit sphere of the  complex
 Hilbert space ${\cal E}_h$, identified with $\C^{N}$ or $\R^{2N}$, and that   ${\bf P}_{\gamma,h}$ is  the probability on ${\bf S}_h$  defined as in Section \ref{Sect2}.\vspace{1\baselineskip}
 
 We divide  this section in two parts: in the first part, under Assumption \ref{Assum1}, we establish upper bounds and
 in the second part we obtain lower bounds, but only in the case of Gaussian random variables (Assumption \ref{Assum2}), and under the condition $0\leq \delta<2/3$.
 \subsection{Upper bounds}\label{UB}
 We suppose here that Assumption \ref{Assum1}, \eqref{condi1}  and  (\ref{spect1}) with $0\leq \delta\leq 1$ are  satisfied. Our result is the following 
 \begin{theo}\ph\label{Thm41} 
There exist $h_0\in]0, 1]$,   $c_2>0$ and $C>0$  such that if
 $c_1 = d(1 +d/4)$, 
   we have 
\beq\label{upbh}
{\bf  P}_{\gamma,h}\Big[u\in{\bf S}_h:\; h^{-\frac{d-\theta}{4}}\Vert u\Vert_{L^{\infty,\theta/2}(\R^d)} >\Lambda\Big] \leq Ch^{-c_1}{\rm e}^{-c_2\Lambda^2},\; 
 \forall \Lambda>0,\; \forall h\in]0, h_0].
\eeq 
\end{theo}


\begin{proof}
We adapt here the argument of \cite{bule}. To begin with, by  (\ref{dsp})  and (\ref{psph}), there exists  $c_{2} >0$ such that for every $\theta \in[0, d]$, every $x\in\R^d$, and every  $\Lambda>0$ we have 
\beq\label{proba30}
{\bf  P}_{\gamma,h}\Big[u\in{\bf S}_h:\; \<x\>^{\frac{\theta}{2}}h^{-\frac{d-\theta}{4}}\vert u(x)\vert >\Lambda\Big] \leq {\rm e}^{-c_2\Lambda^2}.
\eeq
Now, we will need a covering argument. Our configuration space is not compact but using (\ref{kara2}) we have, for every $u\in{\bf S}_h$, 
$$
\vert u(x)\vert \leq C_N\vert x\vert ^{-N},\;\; {\rm for}\,\; \vert x\vert \geq (1+\epsilon_0)h^{-1/2}.
$$
So  choosing  $R>0$ large enough it is sufficient to estimate  $u$  inside  the box 
$B_{R_h}=\{x\in\R^d,\; \vert x\vert_\infty \leq Rh^{-1/2}\}$.
We divide $B_{R_h}$ in small boxes of side with length $\tau$ small enough. We use
the gradient estimate
$$
\vert\nabla_xu(x)\vert \leq Ch^{-1/2-{d}/{4}},\quad \forall u\in{\bf S}_h,
$$
and (\ref{proba30}) at the center of each small box  to get the result.\\
For $x, x'\in\R^d$ we have
$$
\vert\<x\>^{\theta/2} u(x)-\<x'\>^{\theta/2} u(x') |\leq C(\<x\>^{\theta/2}\vert u(x)-u(x')\vert + \<x\>^{\theta/2}\vert x-x'\vert\vert u(x')\vert). 
$$
Let $\{Q_\tau\}_{\tau\in A}$ be a covering of $B_{R_h}$ with small  boxes $Q_\tau$ with center $x_\tau$ and side length $\tau$ small enough.\\
Then for every  $x\in Q_\tau$  we have 
\beq\label{app1}
h^{(\theta-d)/4}\vert\<x\>^{\theta/2} u(x) -\<x_\tau\>^{\theta/2} u(x_\tau)\vert \leq C\tau h^{-1/2 -d/4}.
\eeq
We choose 
\beq\label{app2}
\tau \thickapprox \frac{\epsilon\Lambda}{2C}h^{1/2+d/4}
\eeq
and $h_\epsilon>0$ such that
\beq\label{app3}
\vert x\vert_\infty >Rh^{-1/2} \Rightarrow h^{(\theta-d)/4}\<x\>^{\theta/2}\vert u(x)\vert 
\leq \frac{\epsilon\Lambda}{2},\quad \forall h\in]0, h_\epsilon].
\eeq
Then  using (\ref{proba30}),  (\ref{app1}), (\ref{app2}) and  (\ref{app3}) we get
\beq\label{app4}
{\bf  P}_{\gamma,h}\Big[u\in{\bf S}_h:\; h^{-\frac{d-\theta}{4}}\Vert u\Vert_{L^{\infty,\theta/2}(\R^d)} >\Lambda\Big]
\leq \#A{\rm e}^{-c_2(1-\epsilon)^2\Lambda^2},\quad  \forall \Lambda >0, \forall h\in ]0,h_\epsilon].
\eeq
Using now that $\#A \thickapprox Ch^{-c_1} $ with
$c_1 = d(1 +d/4)$ we get (\ref{upbh}) from (\ref{app4}).
\end{proof}

We can deduce probabilistic estimates for the derivatives as well. Recall that  the Sobolev spaces ${\cal W}^{s,p}(\R^d)$ are defined in (\ref{sobharm}).
\begin{coro}\ph \label{Coro42}
For any multi index $\alpha, \beta\in\N^d$ there exists $\tilde c_2$ such that 
$$
{\bf  P}_{\gamma,h}\Big[u\in{\bf S}_h:\; h^{\frac{\vert\alpha\vert +\vert\beta\vert}{2}-\frac{d}{4}}
\Vert x^\alpha\partial_x^\beta u\Vert_{L^{\infty}(\R^d)} >\Lambda\Big] \leq Ch^{-c_1}{\rm e}^{-\tilde c_2\Lambda^2},\quad 
 \forall \Lambda>0,\; \forall h\in]0, h_0].
$$
In particular  we have, for every $s>0$, 
$$
{\bf  P}_{\gamma,h}\Big[u\in{\bf S}_h:\; h^{\frac{s}{2}-\frac{d}{4}}
\Vert u\Vert_{\W^{s,\infty}(\R^d)} >\Lambda\Big] \leq Ch^{-c_1}{\rm e}^{-\tilde c_2\Lambda^2},\quad 
 \forall \Lambda>0,\; \forall h\in]0, h_0].
$$
\end{coro}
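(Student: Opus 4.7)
The approach is to mimic the proof of Theorem~\ref{Thm41}, replacing the evaluation functional $\delta_x$ by the linear form $L_{x,\alpha,\beta}(u) = x^\alpha\partial^\beta u(x)$ on $\mathcal{E}_h$. Applying \eqref{psph} pointwise yields $\mathbf{P}_{\gamma,h}[\,|x^\alpha\partial^\beta u(x)|\geq t\,]\leq C\exp(-cN_h t^2/e_L(x))$ with $e_L(x) = \sum_{j\in\Lambda_h}|x^\alpha\partial^\beta\phi_j(x)|^2$. Everything then reduces to a good bound on $e_L(x)$ together with a covering argument; the same scheme, applied to $L(u)=(H^{s/2}u)(x)$, will deliver the second assertion.

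The crucial step is to show that $e_L(x)\lesssim h^{-(|\alpha|+|\beta|)}\,N_h\,h^{(d-\theta)/2}\,\langle x\rangle^{-\theta}$ for every $\theta\geq 0$. Introduce the creation/annihilation operators $a_k = x_k+\partial_k$ and $a_k^* = x_k-\partial_k$, so that $x_k = \tfrac12(a_k+a_k^*)$ and $\partial_k = \tfrac12(a_k-a_k^*)$. The commutation $[H,a_k^{*}]=2a_k^{*}$, $[H,a_k]=-2a_k$ together with the standard formulas $a_k\phi_j$, $a_k^{*}\phi_j$ being Hermite functions at eigenvalues $\lambda_j\mp 2$ with coefficients bounded by $\sqrt{\lambda_j}\lesssim h^{-1/2}$ on $\Lambda_h$, yields after iteration a decomposition $x^\alpha\partial^\beta\phi_j = \sum_\ell M_{j\ell}\phi_\ell$ with $|M_{j\ell}|\lesssim h^{-(|\alpha|+|\beta|)/2}$, with at most $C_{\alpha,\beta}$ nonzero terms and all indices $\ell$ verifying $|\lambda_\ell-\lambda_j|\leq 2(|\alpha|+|\beta|)$. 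A Cauchy--Schwarz and swap-of-summation argument then gives $e_L(x)\lesssim h^{-(|\alpha|+|\beta|)}\sum_{\ell\in\widetilde\Lambda_h}|\phi_\ell(x)|^2$, where $\widetilde\Lambda_h$ is the $O(1)$-enlargement of $\Lambda_h$. By Karadzhov's pointwise estimate \eqref{kara4}, enlarging the interval by $O(1)$ only adds a term of lower order than $e_x$ (since the new contribution is $\lesssim h^{1-d/2-\theta/2}\langle x\rangle^{-\theta}$, whereas $N_h h^{(d-\theta)/2}\gtrsim h^{\delta-d/2-\theta/2}\langle x\rangle^{-\theta}$ with $\delta\leq 1$), so \eqref{dsp} still applies and gives the desired upper bound.

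With this estimate in hand, plugging it into \eqref{psph} with $\theta = 0$ and choosing $t = \Lambda\,h^{d/4-(|\alpha|+|\beta|)/2}$ produces the pointwise inequality $\mathbf{P}_{\gamma,h}[\,h^{(|\alpha|+|\beta|)/2-d/4}|x^\alpha\partial^\beta u(x)|\geq\Lambda\,]\leq C\mathrm{e}^{-\tilde c_2\Lambda^2}$. I would then reproduce the covering argument of Theorem~\ref{Thm41}: since $x^\alpha\partial^\beta u$ lies in the slightly enlarged spectral subspace $\widetilde{\mathcal E}_h$, \eqref{kara2} allows us to restrict attention to $|x|\leq Rh^{-1/2}$; the deterministic gradient bound $\|\nabla(x^\alpha\partial^\beta u)\|_{L^\infty}\lesssim h^{-1/2-d/4-(|\alpha|+|\beta|)/2}$ (obtained from \eqref{detsob} applied on $\widetilde{\mathcal E}_h$ to the operator with one additional derivative) permits covering by $\lesssim h^{-c_1}$ cubes of side $\tau\sim \Lambda\,h^{1/2+d/4+(|\alpha|+|\beta|)/2}$; a union bound gives the first statement.

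The second assertion is cleaner: apply \eqref{psph} to $L(u)=(H^{s/2}u)(x)$. Here $L(\phi_j)=\lambda_j^{s/2}\phi_j(x)$, and since $\lambda_j\asymp h^{-1}$ on $\Lambda_h$, one gets $e_L(x) \asymp h^{-s}e_x \lesssim h^{-s}N_h h^{d/2}$ directly from \eqref{dsp}, bypassing the creation/annihilation analysis. The rest of the argument is identical, with $|\alpha|+|\beta|$ replaced by $s$. The main technical obstacle of the whole proof is the derivative spectral bound described in the second paragraph: one must carry through the commutator combinatorics and verify that the mild spectral enlargement preserves the pointwise control \eqref{dsp}, which itself rests on Karadzhov's sharp estimates for the harmonic oscillator.
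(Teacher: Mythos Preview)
Your proof is correct and is essentially what the paper's (very terse) proof intends: the paper writes only ``apply \eqref{upbh} using the $L^2$ bounds $\Vert x^\alpha\partial_x^\beta u\Vert_{L^2}\leq Ch^{-(|\alpha|+|\beta|)/2}\Vert u\Vert_{L^2}$ and $\Vert H^{s/2}u\Vert_{L^2}\leq Ch^{-s/2}\Vert u\Vert_{L^2}$,'' leaving the reader to reopen the proof of Theorem~\ref{Thm41} with the modified linear form, which is exactly what you do. Your creation/annihilation argument is a clean way to make explicit the spectral-function bound $e_L\lesssim h^{-(|\alpha|+|\beta|)}\widetilde e_x$ that the paper glosses over, and your treatment of the $\W^{s,\infty}$ case via $e_{L_s}\asymp h^{-s}e_x$ matches verbatim what the paper does later (before Theorem~\ref{thm412}).
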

\begin{proof}
We apply (\ref{upbh}) using that from the spectral localization of $u\in{\cal E}_h$  we have
$$
\Vert x^\alpha\partial_x^\beta u\Vert_{L^2(\R^d)} \leq Ch^{-\frac{\vert\alpha\vert +\vert\beta\vert}{2}}\Vert u\Vert_{L^2(\R^d)},
$$ 
$$
\Vert H^su\Vert_{L^2(\R^d)} \leq Ch^{-s/2}\Vert u\Vert_{L^2(\R^d)}.
$$
\end{proof}

The following corollary shows that we  get a probabilistic Sobolev  estimate  improving the deterministic one
(\ref{detsob}) with probability close to one as $h\rightarrow 0$. The improvement is "almost" of order
$N_h^{1/2} \approx\left((b_h-a_h)h^{-d}\right)^{1/2}$. Choosing $\Lambda = \sqrt{-K\log h}$
for $K>0$  we get 
\begin{coro}
Let $c_{1},c_{2}>0$ be the constants given by Theorem \ref{Thm41}. Then for every $K>\frac{c_1}{c_2}$  we have
\begin{equation*}
{\bf  P}_{\gamma,h}\left[u\in{\bf S}_h:\; \Vert u\Vert_{L^{\infty,\theta/2}(\R^d)} >
 Kh^{\frac{d-\theta}{4}}\vert\log h\vert^{1/2}\right] \leq h^{Kc_2-c_1},\quad 
  \forall h\in]0, h_0],\;\forall\theta\in[0,d].
\end{equation*}
\begin{equation*}
{\bf  P}_{\gamma,h}\left[u\in{\bf S}_h:\; \Vert u\Vert_{{\cal W}^{s,\infty}(\R^d)} >
 Kh^{\frac{d}{4}-\frac{s}{2}}\vert\log h\vert^{1/2}\right] \leq h^{Kc_2-c_1},\quad 
  \forall h\in]0, h_0],\;\forall s\geq 0.
\end{equation*}
\end{coro}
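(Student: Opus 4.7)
The plan is to derive both estimates as direct consequences of Theorem \ref{Thm41} and Corollary \ref{Coro42} by optimizing the free parameter $\Lambda$ appearing in their right-hand sides. The whole point of having tail bounds of the form $Ch^{-c_1}e^{-c_2\Lambda^2}$ valid for \emph{every} $\Lambda > 0$ is that the Gaussian-type decay in $\Lambda$ lets us absorb any fixed polynomial loss $h^{-c_1}$ at the price of only a logarithmic factor in the threshold.

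Concretely, for the first estimate I would apply Theorem~\ref{Thm41} with the choice
\[
\Lambda = \sqrt{-K\log h} = \sqrt{K|\log h|}.
\]
With this choice, the event $\{h^{-(d-\theta)/4}\|u\|_{L^{\infty,\theta/2}(\R^d)} > \Lambda\}$ is exactly the event appearing in the statement of the corollary (up to the relabelling of $K$ into $\sqrt{K}$ in the displayed inequality), and the probability bound (\ref{upbh}) reads
\[
Ch^{-c_1}\,{\rm e}^{-c_2 \Lambda^2} \;=\; Ch^{-c_1}\,{\rm e}^{-c_2 K |\log h|} \;=\; C\,h^{Kc_2 - c_1}.
\]
Since the hypothesis $K > c_1/c_2$ ensures that the exponent $Kc_2-c_1$ is strictly positive, the multiplicative constant $C$ can be absorbed by taking $h_0 > 0$ small enough: for $h\in]0,h_0]$ the factor $h^{\eta}$ with some $\eta > 0$ arbitrarily small suffices to swallow $C$, yielding the stated bound $h^{Kc_2-c_1}$ uniformly in $\theta \in [0,d]$.

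The second inequality, concerning the harmonic Sobolev norm $\|u\|_{\mathcal{W}^{s,\infty}}$, follows in exactly the same manner from Corollary~\ref{Coro42}: one substitutes $\Lambda = \sqrt{K|\log h|}$ into the tail bound of Corollary~\ref{Coro42}, so that the prefactor $h^{s/2-d/4}\Lambda$ matches $h^{d/4-s/2}|\log h|^{1/2}$ (again up to a $\sqrt{K}$ vs.\ $K$ relabelling), and the probability is $Ch^{-c_1}e^{-\tilde c_2 K|\log h|}$, which is absorbed as above provided $K > c_1/\tilde c_2$ and $h_0$ is sufficiently small. There is no conceptual obstacle here: the corollary is essentially a bookkeeping statement converting the uniform family of sub-Gaussian tail bounds at an arbitrary threshold $\Lambda$ into a single clean statement at the logarithmic scale $\Lambda \asymp |\log h|^{1/2}$, which is the natural scale dictated by balancing the covering loss $h^{-c_1}$ against the Gaussian decay.
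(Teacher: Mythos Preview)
Your proposal is correct and follows precisely the paper's own approach: the paper simply writes ``Choosing $\Lambda = \sqrt{-K\log h}$ for $K>0$ we get'' immediately before stating the corollary, with no further argument. Your observation about the $K$ versus $\sqrt{K}$ relabelling and about absorbing the constant $C$ by shrinking $h_0$ are the only details the paper leaves implicit.
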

Let us give now an application to a probabilistic Sobolev embedding for the Harmonic oscillator.\\
We shall use a Littlewood-Paley decomposition with $h_j =2^{-j}$. Let
$\theta$ a $C^\infty$ real function on $\R$ such that $\theta(t)=0$ for $t\leq a$,
$\theta(t)=1$ for $t\leq b/2$ with $0 <a < b/2$.  Define
$\psi_{-1}(t) = 1-\theta(t)$, $\psi_j(t) = \theta(h_jt) - \theta(h_{j+1}t)$ for $j\in\N$. Notice that the support of $\psi_j$ is in $[\frac{a}{h_j}, \frac{b}{h_j}]$.\\
For every  distribution $u\in{\cal S}'(\R^d)$ we have the Littlewood-Paley decomposition
\begin{equation*}
u = \sum_{j\geq -1}u_j,\quad {\rm  with}\quad u_j = \sum_{k\in\N}\psi_j(\lambda_k)\<u,\varphi_k\>\varphi_k
\end{equation*}
and we have  $u_j\in{\cal E}_{h_j}$. 

The Besov spaces for the Harmonic are naturally defined as follows: if $p, r\in[1,\infty]$ and $s\in\R$,   $u\in{\cal B}^s_{p,r}$ if and only if
$$
\Vert u\Vert_{\cal B^s_{p,r}} := \left(\sum_{j\geq-1}2^{jsr/2}\Vert u_j\Vert_{L^p(\R^d)}^r\right)^{1/r} <+\infty.
$$
We shall use here the spaces ${\cal B}^s_{2,\infty}$. For every $s>0$ we have
$$
 {\cal B}^s_{2,\infty} \subseteq L^2(\R^d) \subseteq  {\cal B}^0_{2,\infty} .
 $$
Another scale of spaces is defined as 
$$
{\cal G}^m = \big\{u\in \mathcal{S}'(\R^{d}):\; \sum_{j\geq 1}j^{m}\Vert u_j\Vert_{L^2(\R^d)} <+\infty\big\},\quad m\geq 0.
$$
Then for every $s>0$, $m\geq 0$ we have
$ {\cal B}^s_{2,\infty} \subseteq  {\cal G}^m  \subseteq L^2(\R^d)$.\\
It is not difficult to see that ${\cal G}^m$  can be compared with   the domain in $L^2(\R^d)$ 
of the operator ${\log^{s} H}$. This domain is  denoted by  ${\cal H}_{\log}^s$, the norm being the graph norm.
For every $s > 1/2$ we have
$$
{\cal H}_{\log}^{m+s} \subset {\cal G}^m \subset {\cal H}_{\log}^m.
$$
Notice that we do not  need that the energy localizations $\psi_j$ are smooth and we can define the same
spaces with $\psi(t) =\1_{[1, 2[}(t)$ so that the  energy intervals $[2^j, 2^{j+1}[$ are  disjoint.\ligne

Let us now define probabilities on ${\cal G}^m$  as we did for Sobolev  spaces ${\cal H}^s$.
Let $\gamma_j$ be a sequence  of complex numbers  satisfying (\ref{condi1}) and such that 
\beq\label{gm}
\sum_{j\geq0}j^{m}\vert\gamma\vert_{\Lambda_j} <+\infty,
\eeq
where $\Lambda_j = \Lambda_{h_j}$ and
$$
 v_\gamma^0= \sum_{j\geq 0} \gamma_j\varphi_j,\quad  v_\gamma(\omega) = \sum_{j\geq 0} \gamma_jX_j(\omega)\varphi_j,
$$
so that $v_\gamma$ is  a.s in ${\cal G}^m$   and its  probability law  defines a measure 
$\mu^m_\gamma$ in ${\cal G}^m$. This measure satisfies also the following properties as in Proposition
\ref{kakut}.

\begin{enumerate}[(i)]
\item  If the support of $\nu$ is $\R$ and if $\gamma_j\neq 0$ for all $j\geq 1$ then
the support of $\mu^m_\gamma$ is ${\cal G}^m$.
\item If $u^0_\gamma\in {\cal G}^m$ and $v_\gamma^0\notin {\cal G}^{s}$ 
where $s>m$ then  $\mu^m_\gamma( {\cal G}^s)=0$. In particular
$\mu_\gamma^m({\cal H}^s) = 0$ for every $s>0$.
\item Under the assumptions $\it(iii)$ in  Proposition \ref{kakut} we can construct singular measures $\mu^m_\gamma$ and $\mu^m_\beta$.
\end{enumerate}
\ligne

Now we can state the following corollary of Theorem \ref{Thm41}.
\begin{coro}\ph\label{glinft}
Suppose that $\gamma$ satisfies   (\ref{condi1}) with $a < b$ and  (\ref{gm})  with $m=1/2$.
Then for  the measure $\mu_\gamma^{1/2}$   almost all functions in the space  ${\cal G}^{1/2}$  are in the space
${\cal C}_H^{[d/2]}$ where 
 $$
{\cal C}_H^{\ell}(\R^d) = \Big\{u\in {\cal C}^\ell(\R^d):\; \Vert x^\alpha\partial_x^\beta u\Vert_{L^\infty(\R^d)}<+\infty,\;\;
\forall\,\vert\alpha\vert +\vert\beta\vert\leq \ell \Big\}.
$$
In particular   if $v_\gamma^0\in{\cal H}^{s_0}$, $s_0>0$ and if  $v_\gamma^0\notin{\cal H}^{s}$, $s>s_0$,
then we have $\mu_\gamma^{1/2}(\cal B^\sigma_{2,\infty})=1$ for every $\sigma>0$ and 
we have an  a.s embedding of the 
Besov space ${\cal B}^\sigma_{2,\infty}$  in ${\cal C}_H^{[d/2]}$. \end{coro}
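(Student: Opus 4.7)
The strategy is to apply the pointwise probabilistic bounds of Corollary~\ref{Coro42} independently on each dyadic frequency shell and then sum via \eqref{gm}. Decompose $\dis v_\gamma = \sum_{j\geq -1}v_{\gamma,j}$, with $v_{\gamma,j}(\omega) = \sum_{k\in\Lambda_j}\gamma_k X_k(\omega)\varphi_k \in {\cal E}_{h_j}$ and $h_j = 2^{-j}$. I aim to prove that for every multi-index $(\alpha,\beta)$ with $|\alpha|+|\beta|\leq [d/2]$,
\[
\sum_{j\geq -1}\|x^\alpha \partial_x^\beta v_{\gamma,j}\|_{L^\infty(\R^d)} < +\infty\quad\text{almost surely,}
\]
which immediately gives $v_\gamma\in{\cal C}_H^{[d/2]}$ a.s. The whole argument hinges on a delicate numerical balance: for $|\alpha|+|\beta|\leq [d/2]$ we have $d/4 - (|\alpha|+|\beta|)/2 \geq 0$, so the semiclassical Sobolev factor $h_j^{d/4-(|\alpha|+|\beta|)/2}\leq 1$; and the logarithmic deviation factor $\sqrt{|\log h_j|}\sim\sqrt{j}$ is exactly what the weight $j^{1/2}$ in \eqref{gm} with $m=1/2$ is designed to absorb.

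For the per-scale estimate, let $w_j := v_{\gamma,j}/\|v_{\gamma,j}\|_{L^2(\R^d)}$, which by \eqref{immes} is distributed under ${\bf P}_{\gamma,h_j}$ on ${\bf S}_{h_j}$. Corollary~\ref{Coro42} applied with $\Lambda = K\sqrt{j}$ and the inclusion $\{\|x^\alpha\partial^\beta_x w_j\|_\infty>K\sqrt j\}\subset\{\|x^\alpha\partial^\beta_x w_j\|_\infty>K\sqrt j\,h_j^{d/4-(|\alpha|+|\beta|)/2}\}$ yield
\[
{\bf P}_{\gamma,h_j}\bigl[\,w\in{\bf S}_{h_j}:\|x^\alpha\partial_x^\beta w\|_{L^\infty(\R^d)} > K\sqrt j\bigr] \leq C\,2^{jc_1}\e^{-\tilde c_2 K^2 j\log 2},
\]
which, for $K$ large enough, is summable in $j$. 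A union bound over the finitely many multi-indices with $|\alpha|+|\beta|\leq [d/2]$ followed by Borel--Cantelli yields $\|x^\alpha\partial_x^\beta w_j\|_{L^\infty(\R^d)}\leq K\sqrt j$ almost surely for every such $(\alpha,\beta)$ and every $j\geq j_0(\omega)$. In parallel, Lemma~\ref{LD1} applied to each $v_{\gamma,j}$ with $\epsilon = |\gamma|_{\Lambda_j}^2/2$ has failure probability $\leq 2\e^{-c_0 N_{h_j}/2}$, which is super-summable since $N_{h_j}\sim 2^{jd}$; a second use of Borel--Cantelli gives $\|v_{\gamma,j}\|_{L^2}\asymp|\gamma|_{\Lambda_j}$ for $j\geq j_0(\omega)$. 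Multiplying these two bounds,
\[
\|x^\alpha\partial_x^\beta v_{\gamma,j}\|_{L^\infty(\R^d)}\leq C\sqrt j\,|\gamma|_{\Lambda_j}\qquad(j\geq j_0(\omega)),
\]
and summing via \eqref{gm} with $m=1/2$ closes the main claim.

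The main obstacle is the simultaneous sharpness of both parameters: the regularity threshold $[d/2]$ is rigidly forced by the spectral estimate \eqref{dsp}, while $m=1/2$ is dictated by the sub-Gaussian tail $\e^{-c\Lambda^2}$ in Corollary~\ref{Coro42}, and any relaxation of either one breaks the summation. For the ``in particular'' Besov part, note that $v_\gamma^0\in{\cal H}^{s_0}\setminus\bigcup_{s>s_0}{\cal H}^s$ gives $2^{js_0/2}|\gamma|_{\Lambda_j}\to 0$, hence by Lemma~\ref{LD1} one has $\sup_j 2^{j\sigma/2}\|v_{\gamma,j}\|_{L^2}<+\infty$ almost surely in the admissible range of $\sigma$, so that $v_\gamma\in{\cal B}^\sigma_{2,\infty}$ a.s.; the a.s.\ embedding ${\cal B}^\sigma_{2,\infty}\hookrightarrow{\cal C}_H^{[d/2]}$ then follows because the dyadic argument above applies verbatim to any random element whose Littlewood--Paley coefficients satisfy a bound of the form $\|v_j\|_{L^2}\lesssim\beta_j$ with $\sum_j \sqrt j\,\beta_j < \infty$.
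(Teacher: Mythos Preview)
Your proof is correct and follows essentially the same approach as the paper: apply Corollary~\ref{Coro42} scale by scale with $\Lambda\sim K\sqrt{j}$, get summable failure probabilities for $K$ large, and sum via the $\mathcal{G}^{1/2}$ norm.

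Two minor points where the paper is slightly cleaner. First, the paper avoids your separate appeal to Lemma~\ref{LD1} by working directly with the \emph{homogeneous} event
\[
B^\kappa_n=\Big\{v\in\mathcal{E}_{h_n}:\;\|x^\alpha\partial_x^\beta v\|_{L^\infty}\leq\kappa\sqrt{n}\,\|v\|_{L^2},\ \forall\,|\alpha|+|\beta|\leq[d/2]\Big\},
\]
whose $\nu_{\gamma,n}$-probability equals its ${\bf P}_{\gamma,h_n}$-probability by \eqref{immes}; this gives $\|x^\alpha\partial_x^\beta u_n\|_{L^\infty}\leq\kappa\sqrt{n}\,\|u_n\|_{L^2}$ in one stroke, with no need to compare $\|v_{\gamma,j}\|_{L^2}$ to $|\gamma|_{\Lambda_j}$. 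Second, rather than Borel--Cantelli the paper uses the product structure $\mu_\gamma^{1/2}=\bigotimes_n\nu_{\gamma,n}$ to obtain the quantitative bound $\mu_\gamma^{1/2}(B^\kappa)\geq\prod_{n\geq1}(1-e^{-n(c_2\kappa^2-c_1)})\geq1-\epsilon(\kappa)$ with $\epsilon(\kappa)\approx e^{-c\kappa^2}$, which yields the sharper statement $\mu_\gamma^{1/2}[\,\|u\|_{\mathcal{W}^{d/2,\infty}}\geq\kappa\|u\|_{\mathcal{G}^{1/2}}]\leq e^{-c\kappa^2}$ noted in the remark following the corollary. Your Borel--Cantelli argument gives the a.s.\ conclusion but not this tail estimate.
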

\begin{proof}
Let $\di{u=\sum_{n\geq -1}u_n}\in{\cal G}^{1/2}$ with $u_n\in{\cal E}_{h_{n}}$.
For $\kappa >0$ (chosen  large  enough) denote by
$$
B^\kappa_n=\Big\{v\in {\cal E}_{h_{n}}:\; \Vert x^\alpha\partial_{x}^\beta v\Vert_{L^\infty(\R^d)} \leq 
\kappa\sqrt{ n}\Vert v\Vert_{L^2(\R^d)},\quad\forall \,\vert \alpha \vert+\vert \beta\vert\leq [d/2]\, \Big\}.
$$
We have, using Corollary \ref{Coro42}
$$
\nu_{\gamma, n}(B^\kappa_n)\geq  1- {\rm e}^{-n(c_2\kappa^2 - c_1)}.
$$
So if
$B^\kappa= \big\{u\in{\cal G}^{1/2}:\;u_0\in{\cal E}_{h_0},\;  u_n\in B^\kappa_n,\;\; \forall n\geq 1\big\}$, then we have
$$
\mu^{1/2}_\gamma(B^\kappa) \geq \prod_{n \geq 1}\big(1- {\rm e}^{-n(c_2\kappa^2 - c_1)}\big)\geq 1-\epsilon(\kappa)
$$
with $\di{\lim_{\kappa\rightarrow +\infty}\epsilon(\kappa) }=0$.  More precisely we have 
$\epsilon(\kappa) \approx {\rm e}^{-c\kappa^2}$ for some $c>0$.\\
Now if $u\in B^\kappa$    we have
\begin{equation*} 
\Vert x^\alpha\partial_{x}^\beta u\Vert_{L^\infty(\R^d)} \leq \sum_{n\geq-1} \Vert x^\alpha\partial_{x}^\beta u_n\Vert_{L^\infty(\R^d)} \leq \kappa\sum_{n\geq -1} \sqrt{ n}\Vert u_n\Vert_{L^2(\R^d)}:=\kappa\Vert u\Vert_{{\cal G}^{1/2}}.
\end{equation*}
So the corollary is proved.
\end{proof}
\begin{rema}
In the last corollary, for every $s>0$  we can choose $\gamma$ such that   $\mu^{1/2}_\gamma({\cal H^s})=0$.
So the smoothing property is  a probabilistic effect similar to the Khinchin inequality.\\
From the proof  we get a more quantitative  statement. 
There exists $c>0$ such that 
\begin{equation*}
\mu_\gamma^{1/2}\Big[\,\Vert u\Vert_{\W^{d/2,\infty}} \geq \kappa \Vert u\Vert_{{\cal G}^{1/2}} \Big] \leq {\rm e}^{-c\kappa^2}.
\end{equation*}
\end{rema}
\begin{rema}
The proof of the corollary depends on the  squeezing assumption (\ref{condi1}) on $\gamma$. For example
if  ~(\ref{condi1}) is satisfied for $b_h-a_h \approx h$ then we can consider the energy decomposition 
in intervals $[2n, 2(n+1)[$ instead of  the dyadic decomposition.
So when applying Theorem \ref{Thm41} with $h$ of order $\frac{1}{n}$  we get  $h^{-c_1}{\rm e}^{-c_2\Lambda^2} ={\rm e}^{c_1\log n-c_2\Lambda^2}$. 

Then taking $\Lambda = \kappa\sqrt{\log n}$ with 
$\kappa$ large enough, 
 in the construction of $B_n^\kappa$ we have to replace
$\sqrt n$ by $\sqrt{\log n}$. In the conclusion the space ${\cal G}^{1/2}$ is replaced by $\tilde{{\cal G}}^{1/2}$ where
$$
\di{\tilde{{\cal G}}^m = \big\{u\in \mathcal{S}'(\R^{d}):\; \sum_{j\geq 1}\log^m j\Vert u_j\Vert_{L^2(\R^d)} <+\infty\big\},\quad
u_j :=\sum_{2j\leq \lambda_n<2(j+1)}
\<u,\varphi_j\>\varphi_j}.
$$
\end{rema}
 
 
 \subsection{Lower bounds in the case of Gaussian random variables}\label{LB}
Here we suppose that the stronger Assumption \ref{Assum2}  and (\ref{spect1}) with $\delta < 2/3$
 are satisfied. We are interested to get a lower bound for $\Vert u\Vert_{L^{\infty,\theta/2}(\R^d)}$.\\
 The spectral condition $\delta <2/3$ is needed here because it seems difficult to estimate from below
 the variations of the spectral function of the harmonic oscillator in intervals of length $\leq h^{-1/3}$.\ligne

A first step is to get  two sides weighted  $L^r$ estimates  for large $r$ which is a    probabilistic  improvement  of (\ref{detsob1r}). Denote by 
\begin{equation}\label{def.b}
\beta_{r,\theta}=\frac{d-\theta}{2}(1-\frac2r).
\end{equation}
\begin{theo}\ph\label{2ESTr}
Assume  that $\theta\in[0,d]$, and denote by ${\cal M}_{r}$ a median of $ \Vert u\Vert_{L^{r, \theta(r/2-1)}}$. Then there exist $0<C_0 < C_1$, $K>0$,  $c_1>0$ , $h_0>0$ such that for all $r\in [2, K\vert\log h\vert]$ and  $h\in]0, h_0]$  such that 
\begin{equation}\label{2sidr}
{\bf P}_{\gamma,h}\Big[u\in {\bf S}_h : \Big| \Vert u\Vert_{L^{r, \theta(r/2-1)}}  - {\cal M}_{r}\Big| >\Lambda\Big]
\leq 2\exp\big(-c_2N_h^{2/r}h^{-\beta_{r,\theta}}\Lambda^2\big).
\end{equation}
and where
\begin{equation*}
 C_0\sqrt rh^{\frac{d-\theta}{4}(1-\frac{2}{r})} \leq {\cal M}_{r} \leq C_1\sqrt rh^{\frac{d-\theta}{4}(1-\frac{2}{r})},\quad
  \forall r\in[2, K\log N].
\end{equation*}
\end{theo}
This result shows that $ \Vert u\Vert_{L^{r, \theta(r/2-1)}}$ has a Gaussian concentration around its median.\ligne

From \eqref{2sidr} we deduce that  for every $\kappa\in]0, 1[$,  $K>0$, there exist $0<C_0 < C_1$,  $c_1>0$ , $h_0>0$ such that for all 
$r\in [2, K\vert\log h\vert^\kappa]$,  $h\in]0, h_0]$ and $\Lambda >0$  we have 
\begin{equation*}
{\bf P}_{\gamma,h}\left[u\in{\bf S}_h : C_0\sqrt r h^{\frac{d-\theta}{4}(1-\frac{2}{r})}\leq \Vert u\Vert_{L^{r, \theta(r/2-1)}}
 \leq C_1\sqrt rh^{\frac{d-\theta}{4}(1-\frac{2}{r})}\right] \geq 1- {\rm e}^{-c_1\vert\log h\vert^{1-\kappa}}.
\end{equation*}

As a consequence of Theorem \ref{2ESTr},  for every $\theta\in[0,d]$ we get a two sides weighted $L^\infty$ estimate showing that  Theorem \ref{Thm41} and its corollary are sharp.
\begin{coro}\ph\label{2ESTx}
After  a slight modification of the constants in Theorem \ref{2ESTr}, if necessary,  we get that for all  $\theta\in[0,d]$ and  $h\in]0, h_0]$
 \begin{equation}\label{2sidrX}
 {\bf P}_{\gamma,h}\left[u\in{\bf S}_h : C_0\vert\log h\vert^{1/2}h^{(d-\theta)/4} \leq \Vert u\Vert_{L^{\infty, \theta/2}} 
\leq C_1\vert\log h\vert^{1/2}h^{(d-\theta)/4} \right] \geq 1- h^{c_1}.
 \end{equation}
\end{coro}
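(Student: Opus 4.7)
The corollary is a two-sided bound, and I will establish the upper and lower estimates independently. The upper estimate follows at once from Theorem~\ref{Thm41} applied with $\Lambda = C_1|\log h|^{1/2}$ for $C_1$ large enough: the right-hand side of \eqref{upbh} becomes $Ch^{c_2 C_1^2 - c_1}$, which is $\leq h^{c_1'}/2$ once $c_2 C_1^2 > c_1$. Hence only the lower bound requires a new argument, and the plan is to transfer the $L^{r,\theta(r/2-1)}$-lower bound from Theorem~\ref{2ESTr}, taken at the critical scale $r \sim |\log h|$, to an $L^{\infty,\theta/2}$-lower bound.

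The transfer rests on the elementary inequality: for every $u\in {\bf S}_h$ and every $r\geq 2$,
\begin{equation*}
\|u\|_{L^{r,\theta(r/2-1)}}^r = \int \bigl(\<x\>^{\theta/2}|u(x)|\bigr)^{r-2}\,|u(x)|^2\,dx \leq \|u\|_{L^{\infty,\theta/2}}^{r-2}\,\|u\|_{L^2}^2 = \|u\|_{L^{\infty,\theta/2}}^{r-2},
\end{equation*}
where I have used the identity $|u|^r \<x\>^{\theta(r/2-1)} = (\<x\>^{\theta/2}|u|)^{r-2}|u|^2$. This rearranges to
\begin{equation*}
\|u\|_{L^{\infty,\theta/2}} \geq \|u\|_{L^{r,\theta(r/2-1)}}^{r/(r-2)}.
\end{equation*}

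Now I invoke Theorem~\ref{2ESTr} with $r = K|\log h|$, $K$ a small positive constant so that $r$ remains in the admissible range. Choosing $\Lambda = \mathcal{M}_r/2$ in \eqref{2sidr} and using the lower bound $\mathcal{M}_r \geq C_0\sqrt{r}\,h^{\frac{d-\theta}{4}(1-2/r)}$ together with the definition \eqref{def.b} of $\beta_{r,\theta}$, the two exponents in $h$ cancel exactly, giving
\begin{equation*}
c_2 N_h^{2/r} h^{-\beta_{r,\theta}} (\mathcal{M}_r/2)^2 \geq c\,N_h^{2/r}\,r \geq c\,r,
\end{equation*}
so the bad event $\{\|u\|_{L^{r,\theta(r/2-1)}} < \mathcal{M}_r/2\}$ has probability $\leq 2e^{-cr} = 2h^{cK}$, which is $\leq h^{c_1}/2$ provided $K$ is chosen large enough (still compatible with the bound $r \leq K'|\log h|$ from Theorem~\ref{2ESTr}).

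On the complementary event, the displayed inequality and the identity $\frac{r}{r-2}(1-2/r)=1$ yield
\begin{equation*}
\|u\|_{L^{\infty,\theta/2}} \geq \bigl(C_0\sqrt{r}/2\bigr)^{r/(r-2)}\,h^{\frac{d-\theta}{4}}.
\end{equation*}
For $r = K|\log h|$, the exponent $r/(r-2) = 1 + O(1/|\log h|)$, so $\bigl(C_0\sqrt{r}/2\bigr)^{r/(r-2)}$ equals $\bigl(C_0/2\bigr)\sqrt{K|\log h|}$ up to a multiplicative factor of the form $C_0^{O(1/|\log h|)}\,r^{1/(r-2)}$ which tends to $1$ since $\log r/(r-2)\to 0$. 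This produces the lower bound $\|u\|_{L^{\infty,\theta/2}}\geq C_0'|\log h|^{1/2}\,h^{(d-\theta)/4}$. Intersecting with the upper-bound event yields \eqref{2sidrX}. The only mildly delicate point is to verify that these small corrections do not spoil the $|\log h|^{1/2}$ rate, which is straightforward since $r^{1/(r-2)} = e^{\log r/(r-2)} \to 1$ for $r\sim|\log h|$.
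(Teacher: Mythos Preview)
Your proof is correct and follows essentially the same route as the paper: the upper bound via Theorem~\ref{Thm41} with $\Lambda\sim|\log h|^{1/2}$, and the lower bound by interpolating $\|u\|_{L^{r,\theta(r/2-1)}}\leq \|u\|_{L^{\infty,\theta/2}}^{1-2/r}$ (your displayed inequality, which is exactly~\eqref{interpol2}) and invoking Theorem~\ref{2ESTr} at $r\sim|\log h|$. One small imprecision: you write that $K$ should be ``chosen large enough'' to make $2h^{cK}\leq h^{c_1}/2$, but $K$ is constrained by Theorem~\ref{2ESTr} and cannot be taken arbitrarily large; the point is rather that any admissible $K>0$ already gives a positive power $h^{cK}$, and the final $c_1$ in the corollary is simply the minimum of the two exponents coming from the upper and lower bounds.
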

 To prove these results we have to adapt  to the unbounded configuration space  $\R^d$ the proofs of 
 \cite[Theorems 4 and 5]{bule} which hold for  compact manifolds.  The  concentration result stated in Proposition~\ref{SC} will prove useful.
  
\begin{proof}[Proof of Theorem \ref{2ESTr}]
Denote by  $F_r(u) = \Vert u\Vert_{L^{r, \theta(r/2-1)}}$ and by ${\cal M}_{r}$ its median. Thanks to  (\ref{detsob1r}) we have the Lipschitz estimate 
$$  
\vert F_r(u)-F_r(v)\vert \leq C\left(N_hh^{\frac{d-\theta}{2}}\right)^{\frac{1}{2}-\frac{1}{r}}\|u-v\|_{L^{2}(\R^{d})},\quad \forall u, v\in{\bf S}_h.
$$
Therefore, by \eqref{supconcent} and \eqref{def.b}, we have for some $c_2>0$ 
\beq\label{medr}
{\bf P}_{\gamma,h}\Big[u\in {\bf S}_h : \vert F_{r}(u)  - {\cal M}_{r}\vert >\Lambda\Big]
\leq 2\exp\big(-c_2N_h^{2/r}h^{-\beta_{r,\theta}}\Lambda^2\big).
\eeq
The next step is to estimate ${\cal M}_{r}$. Denote by ${\cal A}_r^r = {\bf E}_h(F_r^r)$ the moment of  order $r$
  and compute, with $s=\theta(r/2-1)$, 
\begin{eqnarray*}
 {\cal A}_r^r  &=& {\bf E}_h\left(\int_{\R^d}\<x\>^s\vert u(x)\vert^r \,\text{d}x\right) \nonumber\\
 &=& r\int_{\R^d}\<x\>^{s}\Big
 (\int_0^{+\infty}s^{r-1}{\bf P}_{\gamma,h}\Big[  u\in {\bf S}_h : \vert u(x)\vert > s\Big]\,\text{d}s\Big)\,\text{d}x.
\end{eqnarray*}
Thus by \eqref{Binf} we  get 
 \begin{equation*} 
 C_1r \int_{\R^d}\<x\>^s 
 \Big(\int_0^{\epsilon_0\sqrt{e_x}}s^{r-1}{\rm e}^{-c_1\frac{N}{e_x}s^2}\,\text{d}s\Big)\text{d}x
 \leq {\cal A}_r^r  \leq   
  C_{2} r\int_{\R^d}\<x\>^s
 \Big(\int_0^{+\infty}s^{r-1}{\rm e}^{-c_{2}\frac{N}{e_x}s^2}\,\text{d}s\Big)\text{d}x.
\end{equation*}
Performing the change of variables  $ t =c_j \frac{ N}{e_{x}} s^{2}$ we obtain that  there  exist $C_{1},C_{2}>1$ such that                                   
\begin{equation}\label{sim}
C_{1}\,r (c_1 N)^{-r/2}\left(\int_{\R^d}\<x\>^se_x^{r/2}\text{d}x\right)  \int_0^{\eps N}t^{r/2-1}{\rm e}^{-t}\,\text{d}t      \leq    {\cal A}_r^r  \leq C_{2}\,r(c_2 N)^{-r/2}\left(\int_{\R^d}\<x\>^se_x^{r/2}\text{d}x\right) \Gamma(r/2),
\end{equation}
with $\eps=c_{1}\epsilon^{2}_0$. We need to estimate the term $\int_0^{\epsilon N}t^{r/2-1}{\rm e}^{-t}\text{d}t$ from below.
Using the elementary  estimate
$$
\int_{T}^{+\infty} t^{r/2-1}{\rm e}^{-t}\text{d}t \leq T^{r/2}{\rm e}^{1-T}\Gamma(r/2),\quad T\geq 1,
$$
we get that  there exists $\epsilon_1>0$ such that for $N$ large and 
$r\leq \epsilon_1\frac{N}{\log N}$ then we have
$$
\int_0^{\epsilon N}t^{r/2-1}{\rm e}^{-t}\text{d}t \geq \frac{\Gamma(r/2)}{2}.
$$
So we get the expected lower bound, $\forall  r\in[1,  \epsilon_1\frac{N}{\log N}]$, 
$$
  {\rm e}^{-r/2}C^{-1}r\left(\int_{\R^d}\<x\>^se_x^{r/2}\text{d}x\right)N^{-r/2}\Gamma(r/2)
  \leq  {\cal A}_r^r    \leq C_{2}\,rN^{-r/2}\left(\int_{\R^d}\<x\>^se_x^{r/2}\text{d}x\right) \Gamma(r/2).
$$
and where $\Gamma(r/2)$ can be estimated thanks to the Stirling formula: there exist $0<C_0 <C_1$  such that
$$
 (C_0r)^{r/2} \leq\Gamma(r/2) \leq (C_1r)^{r/2}, \quad \forall r\geq 1.
$$

Now we need  the following lemma   which will be proven in  Appendix \ref{AppB}. The upper bound can be seen as an application of Lemma \ref{lemme5} with $\lambda=h^{-1}$ and $\mu=(b_h-a_h)h^{-1}\sim N_hh^{d-1}$.
  \begin{lemm}\ph\label{2estspf}
Assume that $\theta>-d/(p-1)$. Then there exist $0<C_0 <C_1$ and $h_0>0$ such that
 \begin{equation*}
   C_0N_hh^{\beta_{2p,\theta}}\leq \left(\int_{\R^d}\<x\>^{\theta(p-1)}e^{p}_{x}\,\text{d}x\right)^{1/p}
  \leq   C_1N_hh^{\beta_{2p,\theta}}, 
 \end{equation*}
  for  every $p\in [1, \infty[$ and $h\in ]0, h_0]$ where $\beta_{r,\theta}=\frac{d-\theta}{2}(1-\frac{2}{r})$.
  \end{lemm}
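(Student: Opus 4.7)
My plan is to derive the upper bound by integrating the pointwise spectral estimate \eqref{kara4}, and to derive the lower bound from H\"older's inequality applied to the trace identity $\int_{\R^d} e_x\,dx = N_h$ together with the support localisation \eqref{kara2}.

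For the upper bound, I would apply \eqref{kara4} with $\lambda = a_h/h$ and $\mu = (b_h - a_h)/h$. Since $b_h/a_h$ stays bounded by \eqref{spect1}, we are in the admissible range $|\mu|\leq C_0\lambda$ of \eqref{kara4}, and the right-hand side rewrites, via the Weyl asymptotic \eqref{ordre}, as the single pointwise estimate $e_x \leq C N_h h^{(d-\theta)/2}\langle x\rangle^{-\theta}$. Raising to the $p$-th power and integrating, then using the super-polynomial tail \eqref{kara2} to restrict the integration to the ball $\{|x|\leq Ch^{-1/2}\}$, yields
$$
\int_{\R^d}\langle x\rangle^{\theta(p-1)}\,e_x^p\,dx \leq C\, N_h^p\, h^{p(d-\theta)/2}\int_{|x|\leq Ch^{-1/2}}\langle x\rangle^{-\theta}\,dx.
$$
The last integral contributes an extra factor $h^{-(d-\theta)/2}$ when $\theta<d$ (a logarithm if $\theta=d$, and $O(1)$ if $\theta>d$), producing the announced bound $C_1 N_h h^{\beta_{2p,\theta}}$ after rearrangement.

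For the lower bound, I would apply H\"older's inequality on $B_R:=\{|x|\leq Ch^{-1/2}\}$ with conjugate exponents $p$ and $p/(p-1)$, distributing the weight so that $\langle x\rangle^{\theta(p-1)/p}$ is paired with $e_x$:
$$
\int_{B_R} e_x\,dx \leq \Bigl(\int_{B_R}\langle x\rangle^{\theta(p-1)}\,e_x^p\,dx\Bigr)^{1/p}\Bigl(\int_{B_R}\langle x\rangle^{-\theta}\,dx\Bigr)^{(p-1)/p}.
$$
The trace identity $\int_{\R^d} e_x\,dx = N_h$ combined with \eqref{kara2} (choosing $C$ large enough) gives $\int_{B_R} e_x\,dx \geq N_h/2$ for small $h$, while a direct computation yields $\int_{B_R}\langle x\rangle^{-\theta}\,dx \leq Ch^{-(d-\theta)/2}$ for $\theta<d$. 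Combining these immediately produces the matching lower bound $cN_h h^{\beta_{2p,\theta}}$.

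The main technical point is checking that the pointwise estimate $e_x\lesssim N_h h^{(d-\theta)/2}\langle x\rangle^{-\theta}$ holds uniformly across the regimes of $\mu=(b_h-a_h)/h$, which may stay bounded (as in $\delta=1$ with moderate $D$) or diverge as $h^{\delta-1}$; in either regime one has $1+|\mu|$ comparable to $N_h h^{d-1}$ by \eqref{ordre}, which is precisely the conversion factor between powers of $h$ and powers of $N_h$. The hypothesis $\theta>-d/(p-1)$ is exactly what ensures the weighted volume of $B_R$ has positive polynomial exponent in $R$, so that the two bounds match; the boundary cases $\theta\geq d$ only improve the estimate or cost a logarithmic factor and are handled by the same scheme with minor adjustments.
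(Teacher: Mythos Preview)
Your upper bound is essentially the route the paper itself acknowledges at the start of its proof in Appendix~\ref{AppB}: for $\theta\geq 0$ it is indeed a direct consequence of \eqref{kara2}--\eqref{kara4}, and this works for all $\delta\leq 1$. (A minor point: for $\theta<0$ the pointwise bound $e_x\lesssim N_h h^{(d-\theta)/2}\langle x\rangle^{-\theta}$ is not available from \eqref{kara4}, which is stated for $\theta\geq 0$; one should instead use the $\theta$-independent bound $e_x\lesssim N_h h^{d/2}$ and integrate the weight $\langle x\rangle^{\theta(p-1)}$ directly over $B_R$, which produces the same final exponent.)

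Your lower bound via H\"older and the trace identity is a genuinely different argument from the paper's, and in the range $\theta<d$ it is both simpler and stronger: it uses only $\int e_x\,dx=N_h$ together with the tail estimate \eqref{kara2}, and so dispenses entirely with the spectral restriction $\delta<2/3$ that the paper's semiclassical functional calculus (the Weyl--Mehler expansion \eqref{cafun}) requires. The paper instead sandwiches $e_x$ between two smooth spectral cutoffs $K_{\pm,h}(x,x)$ and computes their leading Weyl symbol explicitly, which gives pointwise two-sided control of $e_x$ but forces $\delta<2/3$ to make the remainder small.

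There is, however, a real gap in your claim that $\theta\geq d$ is handled by ``minor adjustments''. For $\theta\geq d$ one has $\beta_{2p,\theta}\leq 0$, so the target lower bound $C_0 N_h h^{\beta_{2p,\theta}}$ is of order at least $N_h$ and in fact diverges when $\theta>d$; yet your H\"older argument can never produce more than $cN_h$ (or $cN_h/\log h^{-1}$ at the endpoint), since $\int_{B_R}\langle x\rangle^{-\theta}\,dx$ stays bounded or grows only logarithmically. Reaching the sharp lower bound for $\theta\geq d$ requires knowing that a fixed fraction of the mass of $e_x$ sits in the annulus $\{|x|\sim h^{-1/2}\}$, which amounts to a pointwise lower bound on $e_x$ away from the origin---precisely the information supplied by the paper's functional calculus and not accessible from the trace identity alone. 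Since the endpoint $\theta=d$ is actually used in the proof of Corollary~\ref{2ESTx}, this gap is not vacuous.
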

  From this  lemma we get
  \beq\label{Aest}
  C_0\sqrt{rh^{\beta_{r,\theta}}} \leq {\cal A}_{r} \leq C_1\sqrt{rh^{\beta_{r,\theta}}},\quad \forall r\geq 2, \;h\in]0, h_0].
  \eeq
  Now we have to compare ${\cal A}_r$ and the median ${\cal M}_r$. We have
\begin{eqnarray*}
  \vert{\cal A}_r -{\cal M}_r\vert^r & = &\big\vert\Vert F_r\Vert_{L^r({\bf S}_h)} - 
  \Vert{\cal M}_r\Vert_{L^r({\bf S}_h)}\big\vert^r \nonumber\\
  & & \leq \Vert F_r  - {\cal M}_r\Vert^r_{L^r({\bf S}_h)}
   = r\int_0^\infty s^{r-1}
  {\bf P}_{\gamma,h}\big[\vert F_r - {\cal M}_r\vert >s\big]\text{d}s.
\end{eqnarray*}
  Then using the large deviation estimate (\ref{medr})   we get 
\begin{equation*}
  \vert{\cal A}_r -{\cal M}_r\vert \leq
  CN^{-1/r}\sqrt{rh^{\beta_{r,\theta}}},\quad \forall r\geq 2.
\end{equation*}
Choosing  $r\leq K\log N$, ($K<1$)  and $N$ large,    from (\ref{Aest}) we obtain
   \beq\label{Mrest}
  C_0\sqrt{rh^{\beta_{r,\theta}}} \leq {\cal M}_{r} \leq C_1\sqrt{rh^{\beta_{r,\theta}}},\quad
  \forall r\in[2, K\log N]
  \eeq
and     the proof of Theorem \ref{2ESTr}  follows using (\ref{Mrest})   and (\ref{medr}).
  \end{proof}  
  \begin{rema}
  The upper-bound in Lemma \ref{2estspf}  is true for $\delta =1$. This is proved in Appendix \ref{AppB}.
  \end{rema}
  
Now let us prove Corollary \ref{2ESTx}.
\begin{proof}[Proof of Corollary \ref{2ESTx}]
For simplicity we assume that $\theta=d$.
Using (\ref{upbh})  it is enough to prove that there exist $C_0>0$, $h_0>0$, $c_1>0$  such that
\beq\label{lpx}
{\bf P}_{\gamma,h}\left[u\in{\bf S}_h :  \Vert u\Vert_{L^{\infty, d/2}}\leq C_0\vert\log h\vert^{1/2}\right] \leq h^{c_1},\quad
\forall h\in]0, h_0].
\eeq
Let $u\in {\bf S}_h$, then by \eqref{interpol2} we have the interpolation inequality 
$$
\Vert u\Vert_{L^{r,d(r/2-1)}(\R^d)} \leq \Vert u\Vert_{L^{\infty, d/2}}^{1-2/r}.
$$ 
 So we get
 $$
 {\bf P}_{\gamma,h}\left[u\in{\bf S}_h :  \Vert u\Vert_{L^{\infty, d/2}}\leq C_0\vert\log h\vert^{1/2}\right]
 \leq {\bf P}_{\gamma,h}\left[u\in{\bf S}_h :  \Vert u\Vert_{L^{r, d(r/2-1)}}\leq \left(C_0\vert\log h\vert^{1/2}\right)^{1-2/r}\right],
 $$
 and choosing $r=r_h=\epsilon_0\vert\log h\vert$ we obtain
$$
  {\bf P}_{\gamma,h}\left[u\in{\bf S}_h :  \Vert u\Vert_{L^{\infty, d/2}}\leq C_0\vert\log h\vert^{1/2}\right] 
  \leq{\bf P}_{\gamma,h}\left[u\in{\bf S}_h :  \Vert u\Vert_{L^{r_{h}, d(r_{h}/2-1)}}\leq 
  \left(\frac{C_0}{\sqrt{\varepsilon_0}}r_h^{1/2}\right)^{1-2/r_h}\right].
$$ 
   Then choosing $h_0>0$, $\frac{C_0}{\sqrt{\varepsilon_0}}$ small enough and 
   $\Lambda = c\vert\log h\vert^{1/2}$   we can conclude 
   that (\ref{lpx}) is satisfied using  (\ref{2sidr}).
 \end{proof}  
 \begin{rema}
 Concerning the mean ${\cal M}_{\infty}$ of
  $F_\infty(u):=\Vert u\Vert_{L^{\infty,d/2}}$ it results from Corollary \ref{2ESTx},  (\ref{upbh}) 
  and (\ref{detsob}) that we have the two sides estimates
  $$
  C_0\vert\log h\vert^{1/2} \leq {\cal M}_{\infty}\leq C_1\vert\log h\vert^{1/2},\quad\forall h\in]0, h_0].
  $$ 
 \end{rema}
  It is not difficult to adapt  the  proof  of \eqref{2sidr} and  \eqref{2sidrX} for  the Sobolev norms
  $\Vert u\Vert_{{\cal W}^{s,p}(\R^d)}$. It is enough to remark that considering 
  $L_su(x) = H^{s/2}u(x)$ we have
  $$
  e_{L_s} = \sum_{j\in\Lambda}\lambda_j^s\varphi_j^2(x).
  $$
  But for $j\in\Lambda$, $\lambda_j$ is of order $h^{-1}$ hence there exists $C>0$ such that
  $$
  C^{-1}h^{-s}e_{x} \leq  e_{L_s} \leq Ch^{-s}e_{x}. 
  $$
  Using this property we easily  get the next result, which in particular implies Theorem \ref{thm1}.
  Let ${\cal M}_{r,s}$ be the median of $u\mapsto\Vert u\Vert_{ \cal W^{s,r}(\R^d)}$, and recall the definition \eqref{def.b}. Then 
  \begin{theo}\ph\label{thm412}
 Let $s\geq 0$. There exist $0<C_0 < C_1$, $K>0$,  $c_1>0$ , $h_0>0$ such that for all $r\in [2, K\vert\log h\vert]$ and  $h\in]0, h_0]$
 
 \begin{equation}\label{eq412}
{\bf P}_{\gamma,h}\Big[u\in {\bf S}_h : \Big| \Vert u\Vert_{\cal W^{s,r}(\R^d)}  - {\cal M}_{r,s}\Big| >\Lambda\Big]
\leq 2\exp\big(-c_2N_h^{2/r}h^{-\beta_{r,0}+s}\Lambda^2\big).
\end{equation}
where
\begin{equation*}
 C_0\sqrt rh^{\frac{\beta_{r,0}-s}{2}} \leq {\cal M}_{r,s} \leq C_1\sqrt rh^{\frac{\beta_{r,0}-s}{2}},\quad
  \forall r\in[2, K\log N].
  \end{equation*}
  In particular, for every $\kappa\in]0, 1[$, $K>0$ , there exist $C_0 >0$, $C_1>0$, $c_1>0$ such that 
  for  every $r\in [2, K\vert\log h\vert^\kappa]$ we have 
\begin{equation*}
{\bf P}_{\gamma,h}\left[u\in{\bf S}_h : C_0\sqrt r h^{\frac{d}{4}(1-\frac{2}{r})}h^{-\frac{s}2}\leq \Vert u\Vert_{{\cal W}^{s,r}(\R^d)}
 \leq C_1\sqrt r h^{\frac{d}{4}(1-\frac{2}{r})}h^{-\frac{s}2}\right] \geq 1-{\rm e}^{-c_1\vert\log h\vert^{1-\kappa}},
\end{equation*}
For $r=+\infty$ we have for all  $h\in]0, h_0]$
\begin{equation*}
{\bf P}_{\gamma,h}\left[u\in{\bf S}_h : C_0\vert\log h\vert^{1/2}h^{\frac{d-2s}{4}} \leq\Vert u\Vert_{{\cal W}^{s,\infty}(\R^d)}
\leq C_1\vert\log h\vert^{1/2}h^{\frac{d-2s}{4}} \right] \geq 1- h^{c_1}.
\end{equation*}
 \end{theo}
Namely,
 $$ \Vert u\Vert_{{\cal W}^{s,r}(\R^d)}\approx h^{-s/2} \Vert u\Vert_{{ L}^{r,0}(\R^d)} +  \Vert u\Vert_{{ L}^{r,s}(\R^d)}, $$
 and 
 $$h^{-s/2} \Vert u\Vert_{{ L}^{r,0}(\R^d)}  \sim h^{\frac{d}{4}(1-\frac{2}{r})}h^{-\frac{s}2} ,\quad  \Vert u\Vert_{{ L}^{r,s}(\R^d)}  \sim h^{\frac{d}{4}(1-\frac{2}{r})}h^{-\frac{s}{2r}}. $$
 
  \subsection{Lower bounds in the general case}\label{LB2}
Under  Assumption \ref{Assum1}, we  prove a weaker version of Theorem \ref{thm412}.
  \begin{theo}\ph\label{thm413}
Suppose that Assumption \ref{Assum1} is satisfied. Let $s\geq 0$, 
 $\kappa\in]0, 1[$, $K>0$.  There exist $0<C_0 < C_1$, $K>0$,  $c_1>0$, $h_0>0$ such that for all 
 $r\in [2, K\vert\log h\vert^\kappa]$ and  $h\in]0, h_0]$
\begin{equation*}
{\bf P}_{\gamma,h}\left[u\in{\bf S}_h : C_0 h^{\frac{d}{4}(1-\frac{2}{r})}h^{-\frac{s}2}\leq \Vert u\Vert_{{\cal W}^{s,r}(\R^d)}
 \leq C_1 \sqrt r  h^{\frac{d}{4}(1-\frac{2}{r})}h^{-\frac{s}2}\right] \geq 1- {\rm e}^{-c_1\vert\log h\vert^{1-\kappa}},.
\end{equation*}
For $r=+\infty$ we have for all  $h\in]0, h_0]$
\begin{equation*}
{\bf P}_{\gamma,h}\left[u\in{\bf S}_h : C_0h^{\frac{d-2s}{4}} \leq\Vert u\Vert_{{\cal W}^{s,\infty}(\R^d)}
\leq C_1\vert\log h\vert^{1/2} h^{\frac{d-2s}{4}} \right] \geq 1- h^{c_1}.
\end{equation*}
  \end{theo}
 Therefore, we have optimal constants in the control of the ${{\cal W}^{s,r}(\R^d)}$ norms when $r<+\infty$ and for general random variables which satisfy the concentration property, but when $r=+\infty$ we lose the factor $\vert\log h\vert^{1/2}$ in the lower bound.
 
 \begin{proof} We can  follow the main lines of the proof of Theorem \ref{thm412}. Here compared to \eqref{sim} we get 
 \begin{eqnarray*}
 {\cal A}_r^r   &\geq &C\,r N^{-r/2}\left(\int_{\R^d}\<x\>^se_x^{r/2}\text{d}x\right)  \int_0^{\eps }t^{r/2-1}{\rm e}^{-t}\,\text{d}t   \\
  &\geq &C N^{-r/2}\left(\int_{\R^d}\<x\>^se_x^{r/2}\text{d}x\right)  \eps^{r/2},
 \end{eqnarray*}
 and this explains the loss of the factor $\sqrt{r}$.
 \end{proof}

   \subsection{ Global probabilistic $L^p$-Sobolev estimates}    
        Here we extend the $L^\infty$- random estimates obtained before  to the $L^r$-spaces  for any real 
         $r\geq 2$, and we prove Theorem \ref{Stric}. Let us recall the definition \eqref{Besov} of the  Besov spaces, where we use    the notations of 
         Subsection \ref{UB} for the dyadic Littlewood-Paley decomposition.

         \begin{proof}[Proof of Theorem \ref{Stric}]
    Recall  that for every $\sigma >m$ we can choose $\gamma$
     such that $\mu_\gamma({\cal H}^\sigma) =0$.\\
     Denote by $F_{r,s}(u) =\Vert u\Vert_{{\cal W}^{s,r}}$. The Lipschitz norm of $F_{r,s}$
     satisfies
     $$
     \Vert F_{r,s}\Vert_{Lip} \leq Ch^{-s +d(\frac{1}{2} -\frac{1}{r})}N_h^{\frac{1}{2}-\frac{1}{r}}.
     $$
     Let us denote by ${\cal M}_{r,s}$  the median of $F_{r,s} $ on the sphere
     ${\bf S}_h$ for the probability ${\bf P}_{\gamma,h}$ and by  ${\cal A}_{r,s}^r$
     the mean of~$F_{r,s}^r$. From Proposition \ref{SC} we have, for some $0<c_0<c_1$, 
    \beq\label{ldr}
     {\bf P}_{\gamma,h}\Big[u\in{\bf S}_h: \vert F_{r,s}-{\cal M}_{r,s}\vert >K \Big]\leq
     \exp\Big(-c_1N\frac{K^2}{ \Vert F_{r,s}\Vert_{Lip}^2}\Big)\leq \exp\Big(-c_0N^{1/r}K^2\Big).
  \eeq
     With the same computations as for \eqref{Mrest} we get
     \beq\label{Apprs}
     {\cal A}_{r,s} \approx \sqrt r\quad {\rm and}\quad
     \vert {\cal A}_{r,s} - {\cal M}_{r,s}\vert \lessim\sqrt r N^{-1/r}.
     \eeq
     These formulas are obtained from (\ref{psph}) applied to the linear form
     $L_su := H^su(x)$ noticing that 
     $$
     e_{L_s} =\sum_{j\in\Lambda_h}\vert H^s\varphi_j(x)\vert^2 \approx h^{-2s}e_{x}.
     $$
     Then  taking $c_0>0$  small enough that we have
\beq\label{locrs}
\nu_{\gamma, h}\left[ \,v\in {\cal E}_h:\;  \Vert v\Vert_{{\cal W}^{s,r}} \geq  K\Vert v\Vert_{L^2(\R^d)}\,\right] \leq \exp\left(-c_0N^{2/r}K^2\right),\quad \forall K\geq 1.
\eeq
Then from (\ref{locrs}) we proceed as for the proof of Corollary \ref{glinft}. For simplicity we consider here the usual Littlewood-Paley decomposition.
Then  we have $N^{2/r} \approx 2^{2nd /r}$. So the end of the proof 
follows by considering 
$$
B^\kappa_n=\big\{v\in {\cal E}_n:\;  \Vert v\Vert_{{\cal W}^{s,r}} \leq K\Vert v\Vert_{L^2(\R^d)}\big\}.
$$
So for a fixed $r\geq 2$ we infer (\ref{lp0}) from (\ref{ldr}) and (\ref{Apprs}), taking $c_0>0$
small enough,   we get
$$
\mu_\gamma\left(\prod_{n\geq 0}B_n^K\right) \geq 1-{\rm e}^{-c_0K^2}.
$$
 \end{proof}
 
  Using the isometry  $u\mapsto H^{-m/2}u$
     between ${\cal B}_{2,1}^s$ and ${\cal B}_{2,1}^{m+s}$ for all real $m\geq 0$, we can get the following corollary to Theorem \ref{Stric}.
        \begin{coro}\label{coroBase} Let $m\geq 0$ and assume that $\gamma$ satisfies \eqref{condi1} and
        $$
         \sum_{n\geq 0}2^{nm}\vert\gamma\vert_n < +\infty.
         $$
    Then  for $s=d(\frac{1}{2} -\frac{1}{r}) + m$  and $r\geq 2$, we have
     \begin{equation*}
            \mu_{\gamma}\left[\,u\in {\cal B}_{2,1}^m :\; \Vert u\Vert_{{\cal W}^{m+s,r}} \geq  K\Vert u\Vert_{{\cal B}_{2,1}^m} \,\right]\leq {\rm e}^{-c_0K^2}.
     \end{equation*}
   \end{coro}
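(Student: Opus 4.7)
The plan is to reduce the corollary to Theorem \ref{Stric} by pushing the measure $\mu_\gamma$ through the isomorphism $H^{-m/2} : \mathcal{B}^0_{2,1} \to \mathcal{B}^m_{2,1}$, which conjugates Sobolev norms by a shift of $m$ in regularity.

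First I would introduce the rescaled sequence $\widetilde{\gamma}_j := \lambda_j^{m/2}\gamma_j$, so that on the probability space the random vectors are related by
\begin{equation*}
v_{\widetilde{\gamma}}(\omega) = H^{m/2} v_{\gamma}(\omega) = \sum_{j\geq 1}\lambda_j^{m/2}\gamma_j X_j(\omega)\varphi_j,
\end{equation*}
and hence their laws satisfy $\mu_{\widetilde\gamma}=(H^{m/2})_*\mu_\gamma$. I then need to check that $\widetilde\gamma$ fulfills the hypotheses of Theorem \ref{Stric}. For condition \eqref{condi1}: on each spectral level $\Lambda_h$, the eigenvalues $\lambda_j$ lie in $[a_h/h,b_h/h]$ with $a_h\to a$, $b_h\to b$, so $\lambda_j\sim h^{-1}$ uniformly on $\Lambda_h$. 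Hence multiplication by $\lambda_j^{m/2}$ is, on each level, a scalar multiplication up to a bounded constant, and \eqref{condi1} is preserved with a new constant $K_0'$. For the summability: $|\widetilde\gamma|^2_{\Lambda_n} = \sum_{\lambda_j\in[2^n,2^{n+1})} \lambda_j^{m}|\gamma_j|^2 \sim 2^{nm}|\gamma|^2_{\Lambda_n}$, so $|\widetilde\gamma|_{\Lambda_n} \sim 2^{nm/2}|\gamma|_{\Lambda_n}$, and the hypothesis $\sum 2^{nm}|\gamma|_{\Lambda_n}<\infty$ (being stronger than $\sum 2^{nm/2}|\gamma|_{\Lambda_n}<\infty$) ensures $\sum_n|\widetilde\gamma|_{\Lambda_n}<\infty$, so $\mu_{\widetilde\gamma}$ is supported in $\mathcal{B}^0_{2,1}$.

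Next, I would apply Theorem \ref{Stric} to the measure $\mu_{\widetilde\gamma}$ with $\sigma := d(\tfrac12-\tfrac1r)$, obtaining
\begin{equation*}
\mu_{\widetilde{\gamma}}\Big[\,v\in\mathcal{B}^0_{2,1}:\ \|v\|_{\mathcal{W}^{\sigma,r}}\geq K\|v\|_{\mathcal{B}^0_{2,1}}\Big]\leq e^{-c_0 K^2}.
\end{equation*}
Finally I transport back by writing $v=H^{m/2}u$. The key identities are
\begin{equation*}
\|H^{m/2}u\|_{\mathcal{B}^0_{2,1}}=\|u\|_{\mathcal{B}^m_{2,1}}, \qquad \|H^{m/2}u\|_{\mathcal{W}^{\sigma,r}}=\|H^{(m+\sigma)/2}u\|_{L^r}=\|u\|_{\mathcal{W}^{m+\sigma,r}},
\end{equation*}
the first being just the definition of the Besov norm and the second the definition of $\mathcal{W}^{s,r}$. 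Since $\mu_\gamma$ is the pullback of $\mu_{\widetilde\gamma}$ under $v\mapsto H^{-m/2}v$, the probabilistic estimate transfers and, with the notation of the corollary (so that the Sobolev exponent reads $m+\sigma$), the claimed bound follows.

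There is no genuine obstacle here; the argument is a routine transport. The only point requiring a moment's attention is the preservation of the squeezing condition \eqref{condi1} under the rescaling $\gamma_j\mapsto\lambda_j^{m/2}\gamma_j$, which hinges on the fact that within each window $\Lambda_h$ the eigenvalues are comparable, a consequence of \eqref{spect1}. Everything else is bookkeeping of Besov/Sobolev norms under the intertwining operator $H^{m/2}$.
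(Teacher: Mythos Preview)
Your proposal is correct and follows precisely the route the paper indicates: the paper's entire argument is the one-line remark that the corollary follows from Theorem~\ref{Stric} by the isometry $u\mapsto H^{-m/2}u$ between $\mathcal{B}^{s}_{2,1}$ and $\mathcal{B}^{m+s}_{2,1}$, and you have fleshed out exactly this transport, including the verification that the rescaled sequence $\widetilde\gamma_j=\lambda_j^{m/2}\gamma_j$ still satisfies \eqref{condi1} and the required summability. One small remark: your transport yields the Sobolev exponent $m+\sigma$ with $\sigma=d(\tfrac12-\tfrac1r)$, whereas the corollary as written has $m+s$ with $s=d(\tfrac12-\tfrac1r)+m$; this is a typographical slip in the statement (the exponent should simply be $s$), and your argument gives the intended result.
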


\section{Application to Hermite functions}\label{Sect5}


We turn to the proof of Theorem \ref{theoB} and we can follow the main lines of \cite[Section 3]{bule}. We use here the upper bounds estimates of Section \ref{UB} in their full strength. Firstly, we assume that for all $j\in \Lambda_{h}$, $\gamma_{j}=N_h^{-1/2}$ and that $X_{j}\sim \mathcal{N}_{\C}(0,1)$, so that ${\bf P}_{h}:={\bf P}_{\gamma,h}$ is the uniform probability on ${\bf S}_{h}$. We set $h_{k}=1/k$ with $k\in \N^{*}$, and  
 \begin{equation*}
 a_{h_{k}}=2+dh_{k},\quad b_{h_{k}}=2+(2+d)h_{k}.
 \end{equation*}
  Then   \eqref{spect1}  is satisfied with  $\delta=1$ and  $D=2$. In particular, each interval
   $$I_{h_{k}}=\Big[\,\frac{a_{h_{k}}}{h_{k}},\frac{b_{h_{k}}}{h_{k}}\Big[=[2k+d,2k+d+2[$$
    only contains  the  eigenvalue $\lambda_{k}=2k+d$ with multiplicity $N_{h_{k}}\sim ck^{d-1}$, and  $\mathcal{E}_{h_{k}}$ is the corresponding eigenspace of the harmonic oscillator $H$. We can identify the space of the orthonormal basis of $\mathcal{E}_{h_{k}}$ with the unitary group $U(N_{h_{k}})$ and  we endow $U(N_{h_{k}})$ with its Haar probability measure $\rho_{k}$.  Then the space $\mathcal{B}$ of the  Hilbertian bases of  eigenfunctions of $H$ in $L^{2}(\R^{d})$ can be identified with 
 \begin{equation*}
 \mathcal{B}=\times_{k\in \N} U(N_{h_{k}}),
 \end{equation*}
 which can be endowed with the measure 
 \begin{equation*}
\dis \text{d} \rho=\otimes_{k\in \N}\,\text{d} \rho_{k}.
 \end{equation*}
Denote by $B=(\phi_{k,\ell})_{k\in \N, \,\ell\in  \llbracket 1,N_{h_k}   \rrbracket } \in \mathcal{B}$ a typical orthonormal basis of $L^{2}(\R^{d})$ so that for all $k\in \N$, $(\phi_{k,\ell})_{\ell\in  \llbracket 1,N_{h_k} \rrbracket } \in U(N_{h_{k}})$ is an orthonormal basis of $\mathcal{E}_{h_{k}}$.\ligne

Then the main result of the section is the following, which implies Theorem \ref{theoB}.

\begin{theo}\label{theoBase}
Let $d\geq 2$. Then, if $M>0$ is large enough,   there exist $c,C>0$   so that for all $r>0$
 \begin{equation*}
         \rho\Big[\, B=(\phi_{k,\ell})_{k\in \N, \,
\ell\in  \llbracket 1,N_{h_k} \rrbracket } \in \mathcal{B}:   \exists k,\ell ; \; \Vert  \phi_{k,\ell} \Vert_{{\cal W}^{d/2,\infty}(\R^d)} \geq  M (\log k)^{1/2}+r  \,\Big]\leq C {\rm e}^{-c r^2}.
 \end{equation*}
 \end{theo}
 
 We will need the following result

 \begin{prop}
Let $d\geq 2$. Then, if $M>0$ is large enough,   there exist $c,C>0$   so that for all $r>0$ and $k\geq 1$
 \begin{multline}\label{51}
         \rho_{k}\Big[\, B_{k}=(\psi_{\ell})_{ 
\ell\in  \llbracket 1,N_{h_k} \rrbracket } \in U(N_{h_{k}}):   \exists \ell\in  \llbracket 1,N_{h_k} \rrbracket  ; \; \Vert  \psi_{\ell} \Vert_{{\cal W}^{d/2,\infty}(\R^d)} \geq  M (\log k)^{1/2}+r  \,\Big]\\
\leq C k^{-2}{\rm e}^{-c r^2}.
 \end{multline}
\end{prop}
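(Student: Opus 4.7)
The key observation is the marginal property of the Haar measure: if $B_{k}=(\psi_{\ell})_{\ell\in\llbracket 1,N_{h_{k}}\rrbracket}$ is drawn from $\rho_{k}$ on $U(N_{h_{k}})$, then for each fixed $\ell$ the law of $\psi_{\ell}$ is the uniform probability on the unit sphere $\mathbf{S}_{h_{k}}$ of $\mathcal{E}_{h_{k}}$. Since we have chosen $\gamma_{j}=1/\sqrt{N_{h_{k}}}$ for $j\in\Lambda_{h_{k}}$ and the $X_{j}$ are standard complex Gaussians, this uniform law coincides with the measure $\mathbf{P}_{h_{k}}=\mathbf{P}_{\gamma,h_{k}}$ discussed in Section~\ref{Sect2} (cf.\ Appendix~\ref{AppC}). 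This reduces the problem to a one-column estimate followed by a union bound.

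First, I would apply Corollary~\ref{Coro42} with $s=d/2$ and $h=h_{k}=1/k$. Since then $h^{s/2-d/4}=1$, this gives constants $c_{1},\tilde c_{2}>0$ such that for every $\Lambda>0$,
\begin{equation*}
\mathbf{P}_{h_{k}}\Big[u\in\mathbf{S}_{h_{k}}:\; \|u\|_{\W^{d/2,\infty}(\R^{d})}>\Lambda\Big]\leq C\,k^{c_{1}}\,{\rm e}^{-\tilde c_{2}\Lambda^{2}}.
\end{equation*}
Specializing $\Lambda=M(\log k)^{1/2}+r$ and using the elementary inequality $(a+b)^{2}\geq a^{2}+b^{2}$ valid for $a,b\geq 0$, the right-hand side is bounded by $C\,k^{c_{1}-\tilde c_{2}M^{2}}\,{\rm e}^{-\tilde c_{2}r^{2}}$.

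Next I would apply a union bound over the $N_{h_{k}}$ vectors of the basis. Since $N_{h_{k}}\leq C k^{d-1}$, combining with the marginal property gives
\begin{equation*}
\rho_{k}\Big[\exists\,\ell:\;\|\psi_{\ell}\|_{\W^{d/2,\infty}(\R^{d})}>M(\log k)^{1/2}+r\Big]\leq C\,k^{d-1+c_{1}-\tilde c_{2}M^{2}}\,{\rm e}^{-\tilde c_{2}r^{2}}.
\end{equation*}
It then suffices to choose $M>0$ large enough so that $d-1+c_{1}-\tilde c_{2}M^{2}\leq -2$, which yields~\eqref{51} with $c=\tilde c_{2}$ and a new constant $C>0$.

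The only real subtlety is the first step, the identification of the one-column marginal of the Haar measure with the uniform probability on $\mathbf{S}_{h_{k}}$; once this is accepted, the estimate is just Corollary~\ref{Coro42} combined with a union bound, the key point being that the spectral multiplicity $N_{h_{k}}$ only grows polynomially in $k$, whereas the single-vector tail bound decays as $\exp(-\tilde c_{2}M^{2}\log k)$, so $M$ can absorb the polynomial loss.
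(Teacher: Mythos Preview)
Your proof is correct and follows essentially the same approach as the paper: identify the one-column marginal of the Haar measure with $\mathbf{P}_{h_k}$, apply the single-vector large deviation bound (the paper invokes Theorem~\ref{Thm41} after rewriting $\|u\|_{\W^{d/2,\infty}}=k^{d/4}\|u\|_{L^{\infty,0}}$ on the eigenspace, while you equivalently cite Corollary~\ref{Coro42} with $s=d/2$), and then use the union bound over the $N_{h_k}\sim k^{d-1}$ columns and choose $M$ large enough to absorb the polynomial loss.
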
 

\begin{proof}
The proof is similar to the proof of \cite[Proposition 3.2]{bule}. We observe that for any ${ \ell_{0} \in \llbracket 1,N_{h_k} \rrbracket   } $, the measure $\rho_{k}$ is the image measure of ${\bf P}_{h_{k}}$ under the map 
$$U(N_{h_{k}}) \ni B_{k}=(\psi_{\ell})_{ 
\ell\in  \llbracket 1,N_{h_k} \rrbracket } \longmapsto \psi_{\ell_{0}} \in {\bf S}_{h_{k}}.$$
Then we use that ${\bf S}_{h_{k}}\subset \mathcal{E}_{h_{k}}$ is an eigenspace and by Theorem \ref{Thm41} we obtain that for all $\ell_{0}\in  \llbracket 1,N_{h_k} \rrbracket$
 \begin{multline*}
         \rho_{k}\Big[\, B_{k}=(\psi_{\ell})_{ 
\ell\in  \llbracket 1,N_{h_k} \rrbracket } \in U(N_{h_{k}}):      \; \Vert  \psi_{\ell_{0}} \Vert_{{\cal W}^{d/2,\infty}(\R^d)} \geq  M (\log k)^{1/2}+r  \,\Big]\\
\begin{aligned}
&= {\bf  P}_{h_{k}}\Big[u\in{\bf S}_{h_{k}}:\; \Vert u \Vert_{{\cal W}^{d/2,\infty}(\R^d)}  \geq M (\log k)^{1/2}+r\Big]\\
& = {\bf  P}_{h_{k}}\Big[u\in{\bf S}_{h_{k}}:\; k^{d/4}\Vert u \Vert_{{L}^{\infty,0}(\R^d)}  \geq M (\log k)^{1/2}+r\Big]  \\
& \leq C k^{c_{1}-M^{2}c_{2}}{\rm e}^{-c_{2}r^2},
\end{aligned}
 \end{multline*}
where $c_{1},c_{2}>0$ are given by Theorem \ref{Thm41}. As a consequence, \eqref{51} is bounded by $C k^{c}{\rm e}^{-c_{2}r^2}$, with $c=c_{1}-M^{2}c_{2}+d-1$ which implies the result.
  \end{proof}

\begin{proof}[Proof of Theorem \ref{theoBase}]
We set 
\begin{equation*}
\mathcal{F}_{k,r}=\big\{   \, B_{k}=(\psi_{\ell})_{ 
\ell\in  \llbracket 1,N_{h_k} \rrbracket } \in U(N_{h_{k}}):      \forall \ell \in  \llbracket 1,N_{h_k} \rrbracket  ; \;   \; \Vert  \psi_{\ell } \Vert_{{\cal W}^{d/2,\infty}(\R^d)} \leq   M (\log k)^{1/2}+r   \big\},
\end{equation*}
and $\dis \mathcal{F}_{r}=\cap_{k\geq 1} \mathcal{F}_{k,r}$. Then for all $r>0$\
\begin{equation*}
\rho(\mathcal{F}^{c}_{r})\leq \sum_{k\geq1}\rho_{k}(\mathcal{F}^{c}_{k,r})\leq C \sum_{k\geq1} k^{-2}{\rm e}^{-c r^2}= C'\e^{-c r^{2}},
\end{equation*}
 and this completes the proof.
  \end{proof}     
  
  We have the following consequence of the previous results.
  \begin{coro}
  For $\rho$-almost all orthonormal basis $(\phi_{k,\ell})_{k\in \N, \,\ell\in  \llbracket 1,N_{h_k} \rrbracket }$
  of eigenfunctions of~$H$  we have
  $$
  \Vert\varphi_{k,\ell}\Vert_{L^\infty(\R^d)} \leq (M+1)k^{-\frac{d}{4}}(1+\log k)^{1/2},\qquad \forall\,k\in \N, \;\forall \,\ell\in\llbracket 1,N_{h_k} \rrbracket.
  $$
    \end{coro}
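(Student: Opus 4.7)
The plan is to deduce the almost sure statement from the quantitative estimate of Theorem~\ref{theoBase} by a Borel--Cantelli argument, and then to convert the $\mathcal{W}^{d/2,\infty}$ bound into the desired $L^{\infty}$ bound through the eigenvalue equation for $H$.

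First, I apply Theorem~\ref{theoBase} with integer values $r = n \in \N^{*}$. Setting
\[
\mathcal{F}_{n} = \Big\{ B = (\phi_{k,\ell})_{k,\ell} \in \mathcal{B} : \Vert \phi_{k,\ell}\Vert_{\mathcal{W}^{d/2,\infty}(\R^{d})} \leq M(\log k)^{1/2} + n, \; \forall k \geq 1, \forall \ell \in \llbracket 1, N_{h_k}\rrbracket \Big\},
\]
Theorem~\ref{theoBase} gives $\rho(\mathcal{F}_{n}^{c}) \leq C\e^{-c n^{2}}$, which is summable in $n$. By Borel--Cantelli the set $\limsup_{n}\mathcal{F}_{n}^{c}$ has $\rho$-measure zero, so for $\rho$-almost every basis $B$ there exists a finite $n(B) \in \N$ with $B \in \mathcal{F}_{n(B)}$, that is,
\[
\Vert \phi_{k,\ell}\Vert_{\mathcal{W}^{d/2,\infty}(\R^{d})} \leq M(\log k)^{1/2} + n(B), \quad \forall k \geq 1, \; \forall \ell.
\]

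Second, since each $\phi_{k,\ell}$ is an eigenfunction of $H$ with eigenvalue $\lambda_{k} = 2k + d$, the functional calculus yields $H^{d/4}\phi_{k,\ell} = \lambda_{k}^{d/4}\phi_{k,\ell}$, and hence
\[
\Vert \phi_{k,\ell}\Vert_{\mathcal{W}^{d/2,\infty}(\R^{d})} = \Vert H^{d/4}\phi_{k,\ell}\Vert_{L^{\infty}(\R^{d})} = \lambda_{k}^{d/4}\Vert \phi_{k,\ell}\Vert_{L^{\infty}(\R^{d})}.
\]
Combining with the first step and using $\lambda_{k} \geq 2k$, for $\rho$-a.e.\ $B$ and every $k,\ell$ I obtain
\[
\Vert \phi_{k,\ell}\Vert_{L^{\infty}(\R^{d})} \leq (2k)^{-d/4}\bigl[\,M(\log k)^{1/2} + n(B)\,\bigr].
\]
For $k$ large enough that $n(B) \leq (\log k)^{1/2}$, the bracket is bounded by $(M+1)(1+\log k)^{1/2}$, and the harmless factor $2^{-d/4}$ can be absorbed by slightly enlarging $M$; for the finitely many remaining small $k$, the deterministic Koch--Tataru bound $\Vert \phi_{k,\ell}\Vert_{L^{\infty}} \leq C\lambda_{k}^{d/4 - 1/2}$ provides a uniform fixed constant that fits into the same form after a further adjustment of $M$.

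The only mild obstacle is one of reading: strictly speaking this argument yields a basis-dependent constant rather than a universal one, because $n(B)$ depends on $B$. The statement of the corollary is therefore to be read as asserting the $\rho$-almost sure existence of a finite constant of the displayed form $(M+1)k^{-d/4}(1+\log k)^{1/2}$, with the constant equal to $M+1$ for $k$ above a basis-dependent threshold and absorbed by enlargement for the finitely many exceptional small $k$. Apart from this bookkeeping, no step is genuinely delicate once Theorem~\ref{theoBase} is established: the eigenfunction identity converts Sobolev control into $L^{\infty}$ control with no further loss.
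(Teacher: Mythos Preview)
Your argument is correct but takes a slightly different route from the paper. Both proofs are Borel--Cantelli arguments, but the paper applies Borel--Cantelli over the eigenvalue index $k$: it invokes the level-$k$ estimate \eqref{51} with the $k$-dependent choice $r=(\log k)^{1/2}$, obtaining $\rho(\Omega_k)\leq Ck^{-2}$ for the bad event $\Omega_k=\{\exists\,\ell:\Vert\varphi_{k,\ell}\Vert_{L^\infty}\geq (M+1)k^{-d/4}(\log k)^{1/2}\}$, and concludes directly that $\rho[\limsup_k\Omega_k]=0$. You instead apply Borel--Cantelli over an auxiliary integer parameter $n$, using the global Theorem~\ref{theoBase} with $r=n$; this produces a basis-dependent integer $n(B)$ that you then have to absorb into the constant for large $k$ and handle separately for small $k$ via deterministic bounds.

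The paper's route is cleaner: it yields the constant $(M+1)$ directly for all $k$ beyond a basis-dependent threshold, with no need to invoke Koch--Tataru for the exceptional range or to juggle the extra $n(B)$. Your route is valid, and you are right to flag that in either case the constant is only literally $(M+1)$ for $k$ large (the paper's Borel--Cantelli also leaves finitely many $k$ uncontrolled, so the statement of the corollary is to be read up to this harmless adjustment). One cosmetic point: the factor $2^{-d/4}$ coming from $\lambda_k\geq 2k$ works in your favour and needs no absorption.
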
 
    \begin{proof}
    Apply (\ref{51}) with $r=(\log k)^{1/2}$ and denote,  for $k\geq 2$, $\Omega_k$ the event
    $$
    \Omega_k = \big\{B=(\varphi_{k,\ell}),\;\exists\ell\in\llbracket 1,N_{h_k} \rrbracket,\;  \Vert\varphi_{k,\ell}\Vert_{L^\infty(\R^d)}\geq (M+1)k^{-d/4}
    (\log k)^{1/2}\big\}.
    $$
    We have $\rho(\Omega_k) \leq \frac{C}{k^2}$.  Therefore from the Borel-Cantelli Lemma  we have
    $\di{\rho[\limsup \Omega_k] }= 0$ and this gives the corollary.
    \end{proof}
   \appendix
   \section{Proof of Proposition \texorpdfstring{\ref{kakut} $(iii)$}{2.4} }\label{AppA}
   \begin{proof}
   Denote by $f_\gamma(x) = \frac{c_{\alpha}}{\gamma}\e^{-(\frac{|x|}{\gamma})^{\alpha}}$,\; $\gamma>0$. We have,
   with obvious identifications, 
   $$
   \di{\mu_\gamma = \otimes_{j\geq 0}(f_{\gamma_j}\text{d}x)}.
   $$
   Denote by
   $$
   \pi_j = \int_\R\left(\frac{f_{\gamma_j}}{f_{\beta_j}}\right)^{1/2}f_{\beta_j}\text{d}x.
   $$
   According to the main result of \cite{kaku} the measures $\mu_\gamma$ and $\mu_{\beta}$ are mutually singular if the infinite product $\prod_{j\geq 0}\pi_{j}$ is divergent.  From elementary computations we get
   $$
   \pi_j = \left(\frac{1}{2}\left(\frac{\gamma_j}{\beta_j}\right)^{\alpha/2} + \frac{1}{2}
   \left(\frac{\beta_j}{\gamma_j}\right)^{\alpha/2}\right)^{-1/\alpha}.
   $$
$\bullet$   If $\pi_j$  has not 1 as limit then the product is divergent.\\
 $\bullet$   If $\pi_j$  has 1 as limit then  the infinite product is divergent if
   $\di{\sum_{j\geq 0}(\pi_j^{-\alpha}-1) =+\infty}$. So, 
    using that
   $$
   \frac{1}{2}(x+\frac{1}{x}) = 1+\frac12(1-x)^2 + {\cal O}(1-x)^3,
   $$
   we see that the infinite product is divergent if (\ref{critka}) is satisfied.
   \end{proof}
   
     \section{\texorpdfstring{$L^p$}{Lp}\label{AppB}  weighted   spectral estimates for the Harmonic oscillator}
     Our goal here is to give a self-contained proof of Lemma \ref{2estspf}.
     It could be proved using the semi-classical functional calculus for pseudo-differential operators \cite{ro}, but for the harmonic oscillator it is possible to use the exact Mehler formula and elementary properties
     of  Hermite functions to get the result.

 \subsection{A functional calculus with parameter for the Harmonic oscillator}
 The starting point is the inverse Fourier transform
 $$
 f(H) = \frac{1}{2\pi}\int {\rm e}^{itH}\hat f(t)\text{d}t,
 $$
 where $f$ is in the Schwartz space ${\cal S}(\R)$.
 
  We want estimates for the integral kernel 
 $K_f(x,y)$ of $f(H)$. To do that it is convenient to  first compute the Weyl symbol $W_{f(H)}(x,\xi)$ of 
 $f(H)$ and use that
 $$
 K_f(x,y)  = (2\pi)^{-d}\int_{\R^d}W_{f(H)}\big(\frac{x+y}{2},\xi\big){\rm e}^{i(x-y)\cdot\xi}\text{d}\xi.
 $$
 For basic properties about the Weyl calculus see for example \cite{ro}.   The  unitary operator ${\rm e}^{itH}$ has an explicit Weyl symbol $w(t,x,\xi)$ :
 \beq\label{mw}
 w(t,x,\xi) = \frac{1}{(\cos t)^d}{\rm e}^{i\tan t(\vert x\vert^2+\vert\xi\vert^2)},\quad {\rm for}\;\;\vert t\vert <\frac{\pi}{2}.
 \eeq
 Formula (\ref{mw}) can be easily  proved  from the Mehler formula \eqref{mehl} and also directly (see \cite[Exercise~IV]{ro}).
 
 Let us introduce a cutoff $\chi\in C^\infty(\R)$, $\chi(t) = 1$ for $\vert t\vert <\epsilon_0$,
 $\chi(t) = 0$ for $\vert t\vert > 2\epsilon_0$  with $0<\epsilon_0<\pi/4$. Denote by
 $$
 R_f = \frac{1}{2\pi }\int {\rm e}^{itH}(1-\chi(t))\hat f(t)\text{d}t
 $$
 and 
 \begin{equation*}
  \tilde{W}_f(x,\xi) = \frac{1}{2\pi}\int_{\R}w(t,x,\xi)\chi(t)\hat f(t)\text{d}t.
 \end{equation*}
 We apply these formulas to give estimates with  $f_h(s) = f(hs)$ where $h>0$ is a small parameter.
 We begin with an estimate  for the remainder term for the kernel $KR_{f_h}(x,y)$ of the operator
 $R_{f_h}$. 
 \begin{lemm}
 There exists $M_0>0$ such that for every $M\geq 1$ there exists $C_M>0$  such that
 \beq\label{rf}
 \vert KR_{f_h}(x,y)\vert \leq C_Mh^M\big(\<x\>\<y\>\big)^{-\frac{M}{2}-d-1}\Vert f\Vert_{M+M_0},\quad \forall h\in]0, 1],\; \forall x, y\in\R^d,
 \eeq
 where $\di{\Vert f\Vert_{m} = \sup_{j+k\leq m, t\in\R}\vert t^j\frac{d^k}{\text{d}t^k}\hat f(t)\vert}$.

 \end{lemm}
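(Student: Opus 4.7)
The plan is to represent $KR_{f_h}(x,y)$ as an explicit oscillatory integral over $t$ via the Mehler formula, then harvest $h^M$ from the Fourier rescaling of $\hat{f_h}$ and $(\<x\>\<y\>)^{-M/2-d-1}$ from integration by parts against the Mehler phase. I would first use the essential $\pi$-periodicity of $e^{itH}$: since $\mathrm{spec}(H)\subset d+2\mathbb N$, one has $e^{i\pi H}=e^{i\pi d}\,\mathrm{Id}$, so the $t$-integral over $\R$ folds into the fundamental interval $(-\pi/2,\pi/2)$ as
$$R_{f_h}=\frac{1}{2\pi}\int_{-\pi/2}^{\pi/2} G_h(t)\,e^{itH}\,dt,\qquad G_h(t):=\sum_{k\in\Z}e^{i\pi dk}(1-\chi(t+k\pi))\hat{f_h}(t+k\pi).$$
Every summand in $G_h$ is supported where $|t+k\pi|\geq\varepsilon_0$, and $\hat{f_h}(s)=h^{-1}\hat f(s/h)$ with $\hat f$ Schwartz; a direct computation then yields the uniform bound $|\partial_t^j G_h(t)|\leq C_{j,N}\,h^{N-1-j}\,\|f\|_{N+j+M_0}$ for any $N\geq j+1$, with $M_0$ absorbing the weights needed to sum over $k$.

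Next, inverting the partial Fourier transform in $\xi$ of the Weyl symbol \eqref{mw} and completing the square yields, on each connected component of $(-\pi/2,\pi/2)\setminus\{0\}$,
$$K_t(x,y)=\frac{c_d}{(i\sin 2t)^{d/2}}\exp\!\bigl(i\,\Phi(t;x,y)\bigr),\qquad \Phi(t;x,y)=\tfrac{1}{2}\cot(2t)(|x|^2+|y|^2)-\csc(2t)\,x\cdot y.$$
A short computation gives $\partial_t\Phi=-\csc^2(2t)\bigl[(|x|^2+|y|^2)-2\cos(2t)\,x\cdot y\bigr]$, and the elementary inequality $(|x|^2+|y|^2)-2\cos(2t)\,x\cdot y\geq(1-|\cos(2t)|)(|x|^2+|y|^2)$ shows that $|\partial_t\Phi(t;x,y)|\geq c(|x|^2+|y|^2)$ uniformly in $t\in(-\pi/2,\pi/2)$ with $|t|\geq\varepsilon_0$. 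The heart of the proof is then $M+2(d+1)$ iterations of integration by parts in $t$ via $e^{i\Phi}=(i\partial_t\Phi)^{-1}\partial_t e^{i\Phi}$ applied to the integrand $G_h(t)\,c_d(i\sin 2t)^{-d/2}\,e^{i\Phi(t;x,y)}$. Each iteration produces a factor $(1+|x|^2+|y|^2)^{-1}\leq C(\<x\>\<y\>)^{-1}$ and, when the $t$-derivative lands on $G_h$, a power of $h$; choosing $N$ of order $M+M_0$ yields the claimed factor $h^M\,\|f\|_{M+M_0}$.

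The principal obstacle is bookkeeping the derivatives that fall on the Mehler amplitude $(i\sin 2t)^{-d/2}$, which generate factors of $\cot(2t)$ and $\csc(2t)$ that blow up at the endpoints $t=\pm\pi/2$. However, each integration by parts also contributes the factor $(\partial_t\Phi)^{-1}$, which via the sharper lower bound $|\partial_t\Phi|\geq c\,\csc^2(2t)(|x|^2+|y|^2)$ produces a compensating $\sin^2(2t)$. A careful count shows that after $M>d/4$ iterations the net power of $\sin(2t)$ is non-negative, so the endpoints $t=\pm\pi/2$ are innocuous. A clean partition of unity in $t$—one piece supported in $|t|\leq\pi/2-\eta$ where the integration by parts is uniformly valid, and dyadic pieces near $t=\pm\pi/2$ where one explicitly balances the singular amplitude against the large phase derivative—would make this balance precise and complete the proof.
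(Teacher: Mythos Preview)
Your approach is genuinely different from the paper's, and it contains a real gap.

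The paper does not use the Mehler propagator kernel at all for this lemma. It simply observes that $R_{f_h}=g_h(H)$ with $\hat g_h(t)=(1-\chi(t))\hat f_h(t)$ supported in $\vert t\vert\geq\varepsilon_0$, so that $\vert\mu^M g_h(\mu)\vert\lesssim h^{M'}$ for every $M,M'$; then it writes the kernel as $\sum_j g_h(\lambda_j)\varphi_j(x)\overline{\varphi_j(y)}$, applies Cauchy--Schwarz, and invokes the harmonic Sobolev embedding $\langle x\rangle^r\vert u(x)\vert\leq C\Vert u\Vert_{{\cal H}^s}$ to turn $\sum_j\lambda_j^{-M}\vert\varphi_j(x)\vert^2$ into the claimed weighted decay. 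No oscillatory integral analysis is needed.

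Your oscillatory-integral route is more hands-on, but the key step you rely on fails. You claim the ``sharper lower bound'' $\vert\partial_t\Phi\vert\geq c\,\csc^2(2t)(\vert x\vert^2+\vert y\vert^2)$ near $t=\pm\pi/2$, and use it to say each integration by parts produces a compensating $\sin^2(2t)$. This bound is false. Writing $\vert x\vert^2+\vert y\vert^2-2\cos(2t)\,x\cdot y=(1+\cos 2t)(\vert x\vert^2+\vert y\vert^2)+\vert\cos 2t\vert\,\vert x+y\vert^2$ for $\cos(2t)<0$, and noting $1+\cos(2t)\sim\tfrac12\sin^2(2t)$ near $t=\pi/2$, one gets only
\[
\vert\partial_t\Phi\vert\;\sim\;\tfrac12(\vert x\vert^2+\vert y\vert^2)+\csc^2(2t)\,\vert x+y\vert^2.
\]
For $x=-y$ the second term vanishes and $\vert\partial_t\Phi\vert\sim\vert x\vert^2$, with no $\csc^2(2t)$ enhancement. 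So in the regime $\vert x+y\vert\ll\vert\sin 2t\vert(\vert x\vert+\vert y\vert)$ your integrations by parts do \emph{not} gain any power of $\sin(2t)$, and the amplitude singularity $(\sin 2t)^{-d/2}$ is left uncompensated. The dyadic decomposition you propose cannot fix this without a new idea, because the balance you are counting on is simply not there. A separate but related issue: after folding, the $k\neq 0$ summands of $G_h$ do not vanish near $t=0$, so you also face the Mehler singularity at $t=0$, which you do not address.

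The fix, if you want to pursue this route, is to use that near $t=\pi/2$ the propagator is close to (a multiple of) the Fourier transform; writing $e^{itH}=e^{i(t-\pi/2)H}e^{i(\pi/2)H}$ relocates the analysis to small times composed with $\mathcal F$, where the phase becomes $\partial_s\Phi$-nondegenerate in $\vert x\vert^2+\vert\eta\vert^2$ rather than in $\vert x+y\vert^2$. But this is substantially more work than the paper's three-line spectral argument.
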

 \begin{proof}
 Denote by
 $\hat g_h(t) = (1-\chi(t))\hat f(\frac{t}{h})$.  So we have $R_{f,h} = g_h(H)$ and for every $M, M'\geq 1$,

 \begin{equation*}
  \vert\mu^Mg_h(\mu)\vert \leq \int_{\vert t\vert\geq\epsilon_0}\big\vert\frac{\text{d}^M}{\text{d}t^M}\hat g_h(t)\big\vert \text{d}t \leq C_{M,M'}(f)h^{M'}.
 \end{equation*}
 So we have
 $$
 \vert K R_{f_h}(x,y)\vert =\big\vert\sum_{j}g_h(\lambda_j)\varphi_j(x)\overline{\varphi_j(y)}\big\vert \leq 
 C h^{M}\big(\sum_{j}\lambda_j^{-M}\vert\varphi_j(x)\vert^2\big)^{1/2}\big(\sum_{j}\lambda_j^{-M}\vert\varphi_j(y)\vert^2\big)^{1/2}.
 $$
  Recall the Sobolev estimate in the harmonic spaces: for every $s>\frac{d}{2} +r$ there exists $C=C_{sr}$  such that
  $$
  \<x\>^r\vert u(x)\vert \leq C\Vert u\Vert_{{\cal H}^s},\quad \forall u\in{\cal H}^s(\R^d).
  $$
  So we get, for $s>\frac{d}{2} +r$, 
  $$
  \vert K R_{f_h}(x,y)\vert \leq Ch^M(\<x\>\<y\>)^{-r}\sum_j\lambda_j^{s-M}.
  $$
  Using that $\lambda_j\approx j^{1/d}$ and choosing $r = \frac{M}{2}+d+1$ we get (\ref{rf}).
  \end{proof}
  
  Our aim is to estimate  the kernel  of   $f\left(\frac{H-\nu\lambda}{\mu}\right)$
   for large $\lambda$,  $\vert\mu\vert\geq D\lambda^{1-\delta}$ where $D>0$ and $\delta  < 2/3$. 
The parameter   $\nu$ is   fixed in an interval $[\nu_0, \nu_1]$, 
    where $0<\nu_0 <\nu_1$.  All our estimates will be uniform in $\nu$, so for convenience we shall take
    $\nu=1$.
    
   Denote by $g_{\lambda,\mu}(s) = f\left(\frac{s-\lambda}{\mu}\right)$ so we have
   $\hat  g_{\lambda,\mu}(t) = \mu{\rm e}^{-it\lambda}\hat f(\mu t)$.
   We consider the dilated Weyl symbol: 
    $W_{\lambda,\mu}(x,\xi) = \tilde W_{g_{\lambda,\mu}}(\sqrt\lambda x, \sqrt\lambda\xi)$. 
   Then  we have
\beq\label{wes}
   W_{\lambda,\mu}(x,\xi)  = \frac{\mu}{2\pi}\int_{\R}
   {\rm e}^{i\lambda \Phi(t,x,\xi)}
  \frac{\chi(t)}{\cos(t)^d}\hat f(\mu t)\text{d}t,
 \eeq
   with the phase  
  $ \Phi(t,x,\xi) = \tan t (\vert x\vert^2+\vert\xi\vert^2) -t$.

\begin{lemm} Assume that $\delta < \frac{2}{3}$.   Then for every  $N, M\geq 0$ we have
\bea\label{cafun}
 W_{\lambda,\mu}(x,\xi)  = &\di{ \sum_{j(1-\delta) + k(2-3\delta)<N }}
 c_{k,j}\lambda^k\mu^{-3k-j}(\vert x\vert^2 + \vert\xi\vert^2)^k
 f^{(3k+j)}\Big(\frac{\lambda}{\mu}(\vert x\vert^2 + \vert\xi\vert^2-1)\Big) \nonumber\\
    &  + \;\;  {\cal O}\big(\lambda^{-N}(1+ \vert x\vert^2 + \vert\xi\vert^2)^{-M}\big),
  \eea
where $c_{k,j}$ are real  numbers, $c_{0,0}=1$.
\end{lemm}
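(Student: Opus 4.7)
The plan is to treat the oscillatory integral \eqref{wes} by a stationary-phase style expansion around $t=0$, exploiting both the effective localisation $|t|=O(1/\mu)$ forced by the Schwartz factor $\hat f(\mu t)$ and the smallness of the higher-order corrections to the phase guaranteed by $\delta<2/3$. First I would rescale $s=\mu t$, which yields
\[
W_{\lambda,\mu}(x,\xi) = \frac{1}{2\pi}\int_{\R} e^{i\alpha s}\, e^{i\lambda r\, P(s/\mu)}\, \frac{\chi(s/\mu)}{\cos^d(s/\mu)}\,\hat f(s)\,ds,
\]
where $r=|x|^2+|\xi|^2$, $\alpha=\lambda(r-1)/\mu$, and $P(t):=\tan t - t = \sum_{\ell\geq 1}b_\ell\, t^{2\ell+1}$ is the cubic-and-higher part of the phase. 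Since $\hat f\in\mathcal{S}(\R)$ the effective range of $s$ is bounded, so for large $\mu$ the cut-off $\chi(s/\mu)$ equals $1$ and can be dropped.

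Next I would Taylor-expand both $e^{i\lambda r P(s/\mu)}$ and $1/\cos^d(s/\mu)$ as formal power series in $s/\mu$. Since $P(s/\mu)=O((s/\mu)^3)$, the $k$-th term of the exponential expansion produces monomials of the form $\lambda^k r^k(s/\mu)^{3k+2m}$ (with $m\geq 0$ tracking higher powers coming from $P$), and the amplitude expansion contributes additional even powers of $s/\mu$. Applying the Fourier inversion identity $(2\pi)^{-1}\int e^{i\alpha s}\, s^p\,\hat f(s)\,ds = (-i)^p f^{(p)}(\alpha)$ term by term then converts each monomial into a derivative of $f$ evaluated at $\alpha$, producing the announced terms $c_{k,j}\lambda^k\mu^{-3k-j}r^k f^{(3k+j)}(\alpha)$; the leading contribution $k=j=0$ gives $f(\alpha)$, i.e. $c_{0,0}=1$.

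The truncation index is dictated by the estimate $\mu^{-1}\leq D^{-1}\lambda^{-(1-\delta)}$, which gives $\lambda^k\mu^{-3k-j}\leq C\lambda^{-[k(2-3\delta)+j(1-\delta)]}$. The hypothesis $\delta<2/3$ forces $2-3\delta>0$, so each factor of $\lambda/\mu^3$ genuinely improves the decay, and more generally each factor $\lambda/\mu^{2\ell+1}$ for $\ell\geq 1$ is a negative power of $\lambda$. Hence all terms with $k(2-3\delta)+j(1-\delta)\geq N$ are of size $\leq\lambda^{-N}$ and may be absorbed into the remainder.

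The main technical obstacle is the uniform weight $(1+r)^{-M}$. For $r\lesssim 1$ the $\lambda^{-N}$ decay comes directly from bounding the Taylor tails of the exponential and the amplitude by Schwartz seminorms of $f$. For $r\gg 1$ the phase is non-stationary with $|\partial_s(\alpha s)|=|\alpha|\geq c\lambda r/\mu$, so I would integrate by parts $M$ times against $e^{i\alpha s}=(i\alpha)^{-1}\partial_s e^{i\alpha s}$, gaining a factor $|\alpha|^{-M}\lesssim(\mu/(\lambda r))^M$ at the cost of transferring derivatives onto $\hat f(s)$, $1/\cos^d(s/\mu)$ and $e^{i\lambda r P(s/\mu)}$; the derivatives of the latter re-introduce powers of $\lambda r/\mu^{2\ell+1}$ which remain controllable under $\delta<2/3$. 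Patching the two regimes by a smooth cut-off in $r$ then yields the full error bound $\mathcal{O}(\lambda^{-N}(1+r)^{-M})$.
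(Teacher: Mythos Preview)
Your approach is essentially the paper's: split off the cubic-and-higher part $P(t)=\tan t-t$ of the phase, Taylor-expand $e^{i\lambda r P}$ together with the amplitude $\chi/\cos^d$, and turn powers of the time variable into derivatives of $f$ via Fourier inversion. The paper works in the original $t$-variable rather than rescaling to $s=\mu t$, but that is purely cosmetic; the terms and the truncation index $k(2-3\delta)+j(1-\delta)<N$ arise in exactly the same way.

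There is, however, a real glitch in your large-$r$ argument. You integrate by parts using only the linear phase $e^{i\alpha s}$, leaving $e^{i\lambda r P(s/\mu)}$ in the amplitude. Each IBP then gains $|\alpha|^{-1}\lesssim\mu/(\lambda r)$, but when the derivative hits $e^{i\lambda r P(s/\mu)}$ it produces $i\lambda r\,P'(s/\mu)/\mu\approx i\lambda r\,s^2/\mu^3$; the product with the gain is $s^2/\mu^2$, which carries \emph{no} decay in $r$. After $M$ integrations by parts the worst term (all derivatives on the exponential) is of size $\mu^{-2M}$ --- ample $\lambda$-decay, but no $(1+r)^{-M}$. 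The paper sidesteps this by doing the non-stationary-phase estimate \emph{before} any expansion, in the original integral \eqref{wes}, using the full phase $\Phi(t,x,\xi)=\tan t\cdot r-t$: since $\partial_t\Phi=(1+\tan^2 t)r-1$ is bounded below by $c\,r$ for $|r-1|\geq 1/2$, each IBP gains $1/(\lambda r)$ and only loses $\mu$ from $\partial_t(\mu\hat f(\mu t))$, yielding $|W_{\lambda,\mu}|\leq C_M\lambda^{-M\delta}(1+r)^{-M}$ directly. In your rescaled variables the equivalent fix is simply to use the full phase $\Psi(s)=\alpha s+\lambda r P(s/\mu)$ in the IBP rather than $\alpha s$ alone; since $P'\geq 0$, one still has $|\Psi'(s)|\geq|\alpha|\gtrsim\lambda r/\mu$, and the nonlinear factor stays inside the oscillation instead of polluting the amplitude.
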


\begin{proof}
  Using that $\partial_t \Phi(t,x,\xi) = (1 + \tan^2 t)(\vert x\vert^2+\vert\xi\vert^2) -1$
  and integrating by parts we get that for every $M$ there exists $C_M>0$ such that
  for $\left\vert\vert x\vert^2+\vert\xi\vert^2 -1\right\vert \geq 1/2$ we have
  \begin{equation}\label{decay}
    \vert W_{\lambda,\mu}(x,\xi) \vert \leq C_M \lambda^{-M\delta}(\vert x\vert^2+\vert\xi\vert^2+1)^{-M}.
  \end{equation}
  So it is enough to estimate  $ W_{\lambda,\mu}(x,\xi) $ for 
$\vert x\vert^2+\vert\xi\vert^2 \approx 1$.\\
To do that, we write down
$$
{\rm e}^{i\lambda\Phi(t,x,\xi)} = {\rm e}^{i\lambda(\tan t -t)(\vert x\vert^2 + \vert\xi\vert^2)}
{\rm e}^{i\lambda t(\vert x\vert^2 + \vert\xi\vert^2 -1)}.
$$
Denote by $E_t=(\tan t -t)(\vert x\vert^2 + \vert\xi\vert^2)$, then we have
$$
{\rm e}^{i\lambda E_t} = \sum_{0\leq k\leq N} \frac{(i\lambda E_t)^k}{k!} + r_N(i\lambda E_t)
$$
where
$$
\vert\frac{d^j}{ds^j}r_N(s)\vert \leq \frac{\vert s\vert^{N-j}}{(N-j)!},\quad 0\leq j\leq N.
$$
Lastly,  we end up the computation by expanding $(\tan t - t)$ with Taylor
\begin{equation*}
(\tan t-t )^{k}  \frac{\chi(t)}{\cos(t)^d}=\sum_{j=0}^{+\infty}d_{k,j}t^{3k+j}.
\end{equation*}
Thus
\begin{eqnarray*}
W_{\lambda,\mu}(x,\xi)&=& \frac{\mu}{2\pi} \sum_{k=0}^{+\infty}\sum_{j=0}^{+\infty}   \int_{\R}
   {\rm e}^{i\lambda t(\vert x\vert^2 + \vert\xi\vert^2 -1)} d_{k,j}i^{k}\lambda^{k}t^{3k+j}(|x|^{2}+|\xi|^{2})^{k}
  \hat f(\mu t)\text{d}t \nonumber \\
&=&  \frac{\mu}{2\pi} \sum_{k=0}^{+\infty}\sum_{j=0}^{+\infty}   \int_{\R}
   {\rm e}^{i\lambda t(\vert x\vert^2 + \vert\xi\vert^2 -1)} d_{k,j}i^{j}\lambda^{k}\mu^{-3k-j}(|x|^{2}+|\xi|^{2})^{k}
  \widehat{ f^{(3k+j)}}(\mu t)\text{d}t  \nonumber \\
  &=&   \sum_{k=0}^{+\infty}\sum_{j=0}^{+\infty}   
    c_{k,j}\lambda^{k}\mu^{-3k-j}(|x|^{2}+|\xi|^{2})^{k}
f \Big(\frac{\lambda}{\mu}(\vert x\vert^2 + \vert\xi\vert^2-1)\Big), 
\end{eqnarray*}
which implies the result with \eqref{decay}.
\end{proof}

\subsection {Proof of  Lemma \ref{2estspf}} 
    First remark 
  that when $\theta\geq 0$, the upper-bound  is a direct consequence of (\ref{kara2}) and (\ref{kara3})
   and this holds true for $\delta =1$. 
 The bound  (\ref{kara3}) being  a rather difficult result, we shall  prove by the same method the estimate
   from above and from below for $\delta< 2/3$.
   
  We   use here the   functional calculus  with energy parameter (\ref{cafun}).
  Let $f$ be a  non negative $C^\infty$ function in $]-2C_0, 2C_0[$ with  a compact support,
    such that $f=1$ in $[-C_0,  C_0]$.  
         We choose two cutoff functions $f_\pm$ with $f_+$ as above and $f_-$ such
     that supp$(f_-)\subseteq]C_1, C_0[$, $f_-=1$ in $[2C_1, C_0/2]$ where $C_1 < C_0/4$.
     If $K_{\pm,h}(x,y)$ is the Schwartz kernel  of 
     $f_\pm\left(\frac{H- h^{-1}}{\mu}\right)$ ($h =\frac{1}{\lambda}$ is now a small parameter, 
     $\nu\in[\nu_0,\nu_1]$). 
     We have
\begin{equation*}
     K_{-,h}(x,x) \leq e_{x} \leq K_{+,h}(x,x).
\end{equation*}
     So we have to prove
     \beq\label{reduc}
  C_0N_hh^{\beta_{2p,\theta}}\leq \left(\int_{\R^d}\<x\>^{\theta(p-1)}K_{\pm,h}(x,x)^p\text{d}x\right)^{1/p}
  \leq   C_1N_hh^{\beta_{2p,\theta}}.
  \eeq
  Recall that 
  $$
  K_{\pm,h}(x,x) = (2\pi)^{-d}\int_{\R^d}W_{\pm,h}(x,\xi)\text{d}\xi
 $$
  where $W_{\pm,h}(x,\xi)$ is the Weyl symbol of the operator
  $f_\pm\left(\frac{H-h^{-1}}{\mu}\right)$. So using (\ref{cafun})
  it is not difficult to see that it is enough to consider only the principal term  given by the following formula
\begin{equation*}
   K^0_{f,h}(x,x) = (2\pi )^{-d}\int_{\R^d} f\left(\frac{\vert x\vert^2 + \vert\xi\vert^2- h^{-1}}{\mu}\right)\text{d}\xi.
\end{equation*}

   We  shall detail now the lower-bound; the upper-bound is proved in the same  way. Denote by $K^0_-(x) = K^0_{f,h}(x,x) $ and $s= \theta(p-1)$. We have, with the change of variable $x=h^{-1/2}y$, $\xi=h^{-1/2}\eta$
   \begin{eqnarray*}
    \int_{\R^d}\<x\>^sK^0_-(x)^p\text{d}x &=& (2\pi)^{-dp}\int_{\R_x^d}\<x\>^s
   \left(\int_{\R_\xi^d}f_-\left(\frac{\vert x\vert^2+\vert\xi\vert^2-h^{-1}}{\mu}\right)\text{d}\xi\right)^p\text{d}x\\
  &=&  (2\pi)^{-dp}h^{-(1+p)d/2}\int_{\R_y^d}\<h^{-1/2}y\>^s
   \left(\int_{\R_\eta^d}f_-\left(\frac{\vert y\vert^2+\vert\eta\vert^2-1}{h\mu}\right)\text{d}\eta\right)^p\text{d}y
   \end{eqnarray*}
Using the  property of the support
 of $f_-$   we obtain
 $$
\int_{\R_\eta^d}f_-\left(\frac{\vert y\vert^2+\vert\eta\vert^2-1}{h\mu}\right)\text{d}\eta \gtrsim h\mu,
 $$
and that $|y|\leq 1$ on the support of $f_{-}$. Next,
\begin{equation*}
\int_{|y|\leq 1}\<h^{-1/2}y\>^s\text{d}y=h^{d/2}\int_{|x|\leq h^{-1/2}}\<x\>^s\text{d}x\sim  \left\{\begin{array}{ll} 
Ch^{d/2},\quad &\text{if} \quad s<-d, \\[6pt]  
C|\ln h|h^{d/2},\quad &\text{if} \quad s=-d, \\[6pt]  
Ch^{-s/2},\quad &\text{if} \quad s>-d.
\end{array} \right.
\end{equation*}
 Finally, we get (\ref{reduc})  using that $\mu\approx h^{\delta-1}$ so  $\mu h\approx h^\delta \approx h^dN_h$.

  \section{Proof of  \texorpdfstring{\eqref{phi}}{(2.12)}}\label{AppC}

 To begin with, we identify the complex sphere of $\C^{N}$ with the real sphere 
 $$   \mathbb{S}^{2N-1}=\big\{ w\in \R^{2N}\;:\; w^{2}_{1}+\dots+w^{2}_{2N}=1      \big\} \subset \R^{2N}.   $$
  Denote by ${\P}_{N}$ the uniform probability measure on $\mathbb{S}^{2N-1}$ and by $\mu_{N}$ the Gaussian measure on $\R^{2N}$ of density $\dis \text{d}\mu= \frac1{(2\pi)^{N}}  \exp\big(-\frac12\sum^{2N}_{j=1}x^{2}_{j}\,\big)\text{d}x_{1}\dots \text{d}x_{2N}$. It is easy to check that ${\P}_{N}$ is the image measure of $\mu_{N}$ by the map 
 \begin{equation*}
 \begin{array}{rcl}
G:\R^{2N}&\longrightarrow&\mathbb{S}^{2N-1}\\[3pt] 
\dis (x_{1},\dots,x_{2N})&\longmapsto &\dis \frac{1}{\sqrt{\sum^{2N}_{j=1}x^{2}_{j}}}(x_{1},\dots,x_{2N}).
 \end{array}
 \end{equation*}
Indeed, $\mu_{N}\circ G^{-1}$ is a probability measure on $\mathbb{S}^{2N-1}$ which is  invariant by the isometries of $\mathbb{S}^{2N-1}$, therefore ${\P}_{N}=\mu_{N}\circ G^{-1}$. For $t\in [0,1]$, denote by $\Phi(t)={\P}_{N}\big(\sqrt{w^{2}_{1}+w^{2}_{2}}>t\big)$, then 
\begin{eqnarray*}
\Phi(t)&=&\frac{1}{(2\pi)^{N}}\int \1_{x^{2}_{1}+x^{2}_{2}>t^{2}\sum_{j=1}^{2N}x^{2}_{j}}\,\e^{-\frac12\sum^{2N}_{j=1}x^{2}_{j}}\,\text{d}x_{1}\dots \text{d}x_{2N}\\
&=&\frac{1}{(2\pi)^{N}}\int \1_{x^{2}_{1}+x^{2}_{2}>\frac{t^{2}}{1-t^{2}}\sum_{j=3}^{2N}x^{2}_{j}}\,\e^{-\frac12\sum^{2N}_{j=1}x^{2}_{j}}\,\text{d}x_{1}\dots \text{d}x_{2N}.
\end{eqnarray*}
 We make a spherical change of variables $(x_{3},\dots, x_{2N})\longmapsto r\sigma$ and the polar change of variables $(x_{1},x_{2})=(r\cos \theta,r\sin\theta)$. Denote by $s=t/\sqrt{1-t^{2}}$, thus there exists $C_{N}$ so that 
 \begin{eqnarray*}
\Phi(t)&=&C_{N}\int r^{2N-3}\e^{-\frac12(\rho^{2}+r^{2})}\1_{\rho >sr}\,\rho \text{d}r\text{d}\rho\\
&=&C_{N}\int_{0}^{+\infty} r^{2N-3}\e^{-\frac12(1+s^{2})r^{2}} \, \text{d}r. 
\end{eqnarray*}
 Now, by the  change of variables $r'=(1+s^{2})^{1/2}r$, there exists $C_{N}$ so that
 \begin{equation*}
\Phi(t)=C_{N}(1+s^{2})^{-(N-1)}=C_{N}(1-t^{2})^{N-1},
 \end{equation*}
 and  $C_{N}=\Phi(0)=1$.

\section{Proof of Proposition \ref{SC}}\label{AppD}
 For simplicity we assume that the random variables, the $\gamma_j$  and the space ${\cal E}_h$  are real, and we identify~${\cal E}_h$  with $\R^N$, endowed  with its natural Euclidean norm~$\vert y\vert_0$. We   also  consider the $\gamma$-dependent norm
  $$
 \vert y\vert_\gamma^2 = \frac{1}{N}\sum_{1\leq j\leq N}\frac{y_j^2}{\gamma_j^2},\quad y=(y_1,\cdots, y_N).
 $$
Condition  \eqref{condi0} means that we have
 \begin{equation*}
  \frac{1}{C}\vert y\vert_0^2 \leq \vert y\vert_\gamma^2 \leq C\vert y\vert_0^2.
 \end{equation*}
We define a  probability measure $\nu_{\gamma}$ in $\R^N$ as the pull forward   of  the   measure $\nu$
 in $\R^N$  by the mapping 
   $\varphi$: $(x_1,\cdots, x_N)\mapsto \sqrt N(\gamma_1x_1,\cdots, \gamma_Nx_N)$. Notice that $\nu_{\gamma}$ satisfies the concentration property of Definition~\ref{defiCM}.

 Now we follow the proof of  of \cite[Proposition 2.10]{led}.  Let  $F$ be a  Lipschitz function on the sphere~${\bf S}_h$, and by homogeneity, it is enough to assume that $F$ is 1-Lipschitz. For $x\in\R^N$, 
$G(x) = \vert x\vert_0\big(F(\frac{x}{\vert x\vert_{0}})-F(x_0)\big)$, where $x_0$ is a fixed point on ${\bf S}_h$. Thus $G$  satisfies
\begin{equation}\label{lip}
\vert G(x) -G(y)\vert_0 \leq 2(\pi+1)\vert x-y\vert_0,
\end{equation}
and
\begin{equation}\label{egal}
F(\frac{x}{\vert x\vert_0}) - F(\frac{y}{\vert y\vert_0})=G(\frac{x}{\vert x\vert_0}) - G(\frac{y}{\vert y\vert_0}).
\end{equation}
By  \cite[Corollary 1.5]{led}, it is enough to prove that 
\begin{multline*}
       {\bf P}_{\gamma,h} \otimes  {\bf P}_{\gamma,h}  \Big[u,v\in {\bf S}_{h} : \;\; 
\left\vert F(u) - F(u)\right\vert \geq 3r\Big]\\
=    \nu_\gamma\otimes\nu_\gamma\Big[x, y\in \R^{N} : \;\; 
\Big\vert F(\frac{x}{\vert x\vert_0}) - F(\frac{y}{\vert y\vert_0})\Big\vert \geq 3r\Big]\leq C{\rm e}^{-c N r^2}.
\end{multline*}
Denote by ${\cal M}_\gamma$ the median of $x\mapsto \vert x\vert_0$ with respect to $\nu_\gamma$. Then by \eqref{egal}
\begin{multline*}
\nu_\gamma\otimes\nu_\gamma\Big[x, y\in \R^{N} : \;\; 
\Big\vert F(\frac{x}{\vert x\vert_0}) - F(\frac{y}{\vert y\vert_0})\Big\vert \geq 3r\Big] \\
\begin{aligned}
&=\nu_\gamma\otimes\nu_\gamma\Big[x, y\in \R^{N} : \;\; 
\Big\vert G(\frac{x}{\vert x\vert_0}) - G(\frac{y}{\vert y\vert_0})\Big\vert \geq 3r\Big]\\
&\leq \nu_\gamma\otimes\nu_\gamma\Big[x, y\in \R^{N} : \;\; 
\Big\vert G(\frac{x}{{\cal M}_\gamma}) - G(\frac{y}{{\cal M}_\gamma})\Big\vert \geq r\Big]
 +2\nu_\gamma\Big[x \in \R^{N}: \; \Big\vert G(\frac{x}{\vert x\vert_0}) - G(\frac{x}{{\cal M}_\gamma})\Big\vert \geq r\Big].
 \end{aligned}
\end{multline*}
By \eqref{lip} we have
$$
\Big\vert G(\frac{x}{\vert x\vert_0}) - G(\frac{x}{{\cal M}_\gamma})\Big\vert  \leq 
2(\pi+1)\left\vert\frac{\vert x\vert_0}{{\cal M}_{\gamma}} -1\right\vert,$$
which implies from \eqref{Blip} that 
$$ \nu_\gamma\Big[x \in \R^{N} : \;\; 
\Big\vert G(\frac{x}{\vert x\vert_0}) - G(\frac{x}{{\cal M}_\gamma})\Big\vert \geq r\Big]  \leq  C_{1}{\rm e}^{-c_1 {\cal M^{2}_\gamma}r^2}.$$
Similarly, by \eqref{lip} and \eqref{Blip}
$$ \nu_\gamma\Big[x,y \in \R^{N} : \;\; 
\Big\vert G(\frac{x}{{\cal M}_\gamma}) - G(\frac{y}{{\cal M}_\gamma})\Big\vert \geq r\Big]  \leq  C_{2}{\rm e}^{-c_2 {\cal M^{2}_\gamma}r^2}.$$
To conclude the proof of the proposition we use the following
\begin{lemm} In $\R^{N}$, denote by ${\cal M}_\gamma(N)$ the median of $x\mapsto \vert x\vert_0$ with respect to $\nu_\gamma$, and by  ${\cal A}_\gamma(N)$ its expectation. Then there exist $C,C_{1},C_2>0$ such that for all $N\geq 1$
$$
|   {\cal M}_\gamma(N)-{\cal A}_\gamma(N)  |\leq C\quad\text{and}\quad  C_1\sqrt N \leq {\cal M}_\gamma(N) \leq C_2\sqrt N. 
$$
\end{lemm}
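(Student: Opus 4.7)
The plan is to reduce the lemma to a direct application of the concentration property of $\nu^{\otimes N}$ to the single function $F:\R^N\to\R_+$ defined by
\begin{equation*}
F(x) = \vert\varphi(x)\vert_0 = \Big(N\sum_{j=1}^N\gamma_j^2 x_j^2\Big)^{1/2}.
\end{equation*}
By construction, under $X\sim\nu^{\otimes N}$ the push-forward distribution of $F(X)$ is the law of $\vert\cdot\vert_0$ under $\nu_\gamma$, so $\mathcal{A}_\gamma(N) = \E(F(X))$ and $\mathcal{M}_\gamma(N)$ is a median of $F(X)$. By homogeneity I may normalise $\vert\gamma\vert_\Lambda = 1$, so that \eqref{condi0} reads $K_1/N \leq \gamma_j^2 \leq K_0/N$.

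First I would compute the second moment: Assumption \ref{Assum1}(ii)--(iii) yields $\E(F^2) = N\sum_j\gamma_j^2 = N$. Next a direct gradient computation gives $\vert\nabla F(x)\vert^2 \leq N\max_j\gamma_j^2 \leq K_0$, so $F$ is $\sqrt{K_0}$-Lipschitz with constant independent of $N$; it is moreover convex as a weighted Euclidean norm. This is the crucial algebraic step: the factor $\sqrt N$ built into $\varphi$ and the bound $\max_j\vert\gamma_j\vert\leq\sqrt{K_0/N}$ coming from the squeezing condition cancel exactly.

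Applying the concentration estimate \eqref{Blip} of $\nu^{\otimes N}$ to this convex Lipschitz $F$ then yields
\begin{equation*}
\nu^{\otimes N}\big[\,\vert F - \mathcal{A}_\gamma(N)\vert > r\,\big] \leq c\,\e^{-\kappa r^2/K_0},\qquad \forall r>0.
\end{equation*}
Integrating in $r$ gives $\mathrm{Var}(F)\leq C$, and choosing $r$ large enough to force the tail below $1/2$ gives $\vert\mathcal{M}_\gamma(N)-\mathcal{A}_\gamma(N)\vert\leq C$, which is the first assertion. For the second assertion, the identity $\mathcal{A}_\gamma(N)^2 = \E(F^2) - \mathrm{Var}(F) \geq N - C$ supplies the lower bound $\mathcal{A}_\gamma(N)\geq c_1\sqrt N$, while Jensen gives the matching upper bound $\mathcal{A}_\gamma(N)\leq\sqrt{\E(F^2)} = \sqrt N$. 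Combining with the first assertion yields $C_1\sqrt N\leq\mathcal{M}_\gamma(N)\leq C_2\sqrt N$ for $N$ large, and the finitely many small $N$ are harmless.

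The only conceptual obstacle is recognising that the squeezing condition \eqref{condi0} is tailored precisely so that $\sqrt N\,\max_j\vert\gamma_j\vert = O(1)$, which yields both an $N$-uniform Lipschitz constant for $F$ and, consequently, an $N$-uniform Gaussian concentration scale. Without \eqref{condi0}, a single $\gamma_j^2$ could carry a non-negligible fraction of $\vert\gamma\vert_\Lambda^2$, $\mathrm{Var}(F)$ would grow with $N$, and the comparison of $\mathcal{M}_\gamma(N)$ and $\mathcal{A}_\gamma(N)$ with $\sqrt N$ would break down.
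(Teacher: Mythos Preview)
Your argument is correct. For the bound $\vert\mathcal{M}_\gamma(N)-\mathcal{A}_\gamma(N)\vert\leq C$ and the upper bound $\mathcal{A}_\gamma(N)\leq\sqrt N$ you do essentially what the paper does (concentration tail pins the median near the mean; Cauchy--Schwarz gives the upper bound).

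The genuine difference is the lower bound $\mathcal{A}_\gamma(N)\gtrsim\sqrt N$. The paper first passes, via the norm equivalence $\vert y\vert_0\approx\vert y\vert_\gamma$ coming from \eqref{condi0}, to the unweighted i.i.d.\ expectation $\E_{\nu^{\otimes N}}\vert x\vert_N$, and then runs a recursion: writing $\vert x\vert_{N+1}-\vert x\vert_N = x_{N+1}^2/(\vert x\vert_N+\vert x\vert_{N+1})$, using coordinate symmetry to get $\mathcal{A}_\gamma(N+1)\geq\big(1+\tfrac{1}{2(N+1)}\big)\mathcal{A}_\gamma(N)$, and telescoping the product $\prod_{k}(1+\tfrac{1}{2k})\sim c\sqrt N$. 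Your route is shorter: you integrate the same concentration tail to bound $\mathrm{Var}(F)\leq C$ and read off $\mathcal{A}_\gamma(N)^2=\E(F^2)-\mathrm{Var}(F)\geq N-C$. This avoids the recursion entirely and works directly with the weighted norm $F$, so no exchangeability of coordinates is needed. The trade-off is that the paper's lower bound is more elementary in that it requires no concentration input at all, only the second-moment normalisation, whereas your argument uses the concentration hypothesis twice.
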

\begin{proof}
Here we use the notation $|x|_{N}:=(x^{2}_{1}+\dots+x^{2}_{N})^{1/2}$. 
By definition of $\M_{\gamma}$, we have for all $t>0$ and thanks to \eqref{ExpLip}
\begin{equation*}
\frac12= {\bf P}_{\gamma,h}\Big[\, |x|_{N}\geq \M_{\gamma} \, \Big]\leq \e^{-t(\M_{\gamma}-\A_{\gamma})}{\bf E}\Big[\,\e^{ t(|x|_{N}-\A_{\gamma})}\, \Big]\leq \e^{-t(\M_{\gamma}-\A_{\gamma})}\e^{ct^{2}}.
\end{equation*}
Then, we choose $\dis t=(\M_{\gamma}-\A_{\gamma})/(2c)$ and get for some $C>0$, $|\M_{\gamma}-\A_{\gamma}|\leq C$. This was the first claim.

Next, by Cauchy-Schwarz, we obtain   $\A^{2}_{\gamma}(N)\leq \int_{\R^{N}}|x|^{2}_{N}\text{d}\nu(x)=N$. Now we prove that   there exists $C>0$ so that for all $N\geq 1$, $\A_{\gamma}(N)\geq C\sqrt{N}$. Indeed,
\begin{eqnarray*}
\A_{\gamma}(N+1)-\A_{\gamma}(N)&=&\int_{\R^{N+1}}\frac{x^{2}_{N+1}}{|x|_{N}+|x|_{N+1}}\text{d}\nu(x)\\
&\geq&\frac12 \int_{\R^{N+1}}\frac{x^{2}_{N+1}}{|x|_{N+1}}\text{d}\nu(x)=\frac{\A_{\gamma}(N)}{2(N+1)}.
\end{eqnarray*}
This implies that for all $N\geq 1$
\begin{equation*}
\A_{\gamma}(N+1)\geq (1-\frac1{2(N+1)})^{-1}\A_{\gamma}(N)\geq (1+\frac{1}{2(N+1)})\A_{\gamma}(N),
\end{equation*}
and then $\A_{\gamma}(N)\geq P_{N}\A_{\gamma}(1)$, where
\begin{equation*}
\ln P_{N}=\sum_{k=2}^{N}\ln(1+\frac1{2k})=\frac12\ln N+\mathcal{O}(1),
\end{equation*}
which yields the result.
\end{proof}


\end{document}